\newtheorem{lemma}{Lemma}[section]
\newtheorem{thm}[lemma]{Theorem}
\newtheorem{prop}[lemma]{Proposition}
\newtheorem{cor}[lemma]{Corollary}
\theoremstyle{definition}
\newtheorem{defn}[lemma]{Definition}
\newtheorem{rem}[lemma]{Remark}
\newtheorem{conv}[lemma]{Convention}
\theoremstyle{definition}
\newtheorem*{claim}{Claim}
\newcommand{\g} {\ensuremath {\gamma}}
\newcommand{\N}{\ensuremath {\mathbb{N}}}
\newcommand{\R} {\ensuremath {\mathbb{R}}}
\newcommand{\G} {\ensuremath {\mathbb{G}}}
\newcommand{\F} {\ensuremath {\mathbb{F}}}
\newcommand{\st}{\ensuremath {\, ^* }}
\newcommand{\calG} {\ensuremath {\mathcal{G}}}
\newcommand{\calP} {\ensuremath {\mathcal{P}}}
\newcommand{\calH} {\ensuremath {\mathcal{H}}}
\newcommand{\calW} {\ensuremath {\mathcal{W}}}
\newcommand{\calI} {\ensuremath {\mathcal{I}}}
\newcommand{\calY} {\ensuremath {\mathcal{Y}}}
\newcommand{\calJ} {\ensuremath {\mathcal{J}}}
\newcommand{\calU} {\ensuremath {\mathcal{U}}}
\newcommand{\calX} {\ensuremath {\mathcal{X}}}
\newcommand{\calF} {\ensuremath {\mathcal{F}}}
\begin{document}

\thispagestyle{empty}
\title{Tree-graded asymptotic cones}
\author{Alessandro Sisto}
\address{Mathematical Institute, 24-29 St Giles,Oxford OX1 3LB, United Kingdom}
\email{sisto@maths.ox.ac.uk}
\date{}
\maketitle

\begin{abstract}
We study the bilipschitz equivalence type of tree-graded spaces, showing that asymptotic cones of relatively hyperbolic groups (resp. asymptotic cones of groups containing a cut-point) only depend on the bilipschitz equivalence types of the pieces in the standard (resp. minimal) tree-graded structure. In particular, the asymptotic cones of many relatively hyperbolic groups do not depend on the scaling factor. We also describe the asymptotic cones as above ``explicitly''. Part of these results were obtained independently and simultaneously by D. Osin and M. Sapir in~\cite{OS}.
\end{abstract}

\section*{Introduction}
\let\thefootnote\relax\footnotetext{2010 \emph{Mathematics Subject Classification}. Primary: 20F65 Secondary: 20F69, 03H05 \newline Keywords: tree-graded, asymptotic cone, relatively hyperbolic}
Relatively hyperbolic groups were first introduced in~\cite{Gr2} as a generalization of hyperbolic groups. Equivalent definitions and
further properties may be found in \cite{Bowditch:RelHyp}, \cite{Fa},
\cite{Dahmani:thesis}, \cite{Yaman:RelHyp}, \cite{Os1}, \cite{DS1}. They are modeled on the fundamental groups of finite volume complete manifolds of negative curvature (while hyperbolic groups are modeled on the fundamental groups of \emph{compact} negatively curved manifolds). Other examples of relatively hyperbolic groups include free products of groups, which are hyperbolic relative to the factors, and fundamental groups of non-geometric Haken manifolds with at least one hyperbolic component, which are hyperbolic relative to the fundamental groups of the maximal graph manifold components and the fundamental groups of the tori or Klein bottles not bounding any graph manifold component.
\par
Tree-graded spaces were introduced by Dru\c{t}u and Sapir in~\cite{DS1} to describe the large scale geometry of relatively hyperbolic groups. These spaces have a distinguished family of subsets called \emph{pieces} such that, very roughly, each path can be thought of as a concatenation of paths contained in some piece and paths in a real tree. The tree-graded structure on the large scale has been used in~\cite{DS1}, for example, to prove rigidity theorems and to study the automorphisms of certain relatively hyperbolic groups.
\par
The link between tree-graded spaces and relatively hyperbolic groups is provided by asymptotic cones. The asymptotic cones of a metric space $X$ are defined in order to keep track of the large scale geometry of $X$. In fact, they are obtained rescaling the metric on $X$ by an ``infinitesimal factor'', so that ``infinitely far away'' points become close.
\par
Tree-graded spaces describe the large scale geometry of relatively hyperbolic groups in the sense that the asymptotic cones of a relatively hyperbolic group can be endowed with a natural tree-graded structure.
\par
The idea behind asymptotic cones was introduced by Gromov to prove that groups of polynomial growth are virtually nilpotent in~\cite{Gr1}, where the notion of Gromov-Hausdorff convergence is used to define them. Later, a more general and somehow easier to handle definition was given by Van den Dries and Wilkie in~\cite{vDW}, using nonstandard methods. However, instead of using nonstandard methods, asymptotic cones are nowadays defined in terms of ultrafilters. Since~\cite{Gr1}, asymptotic cones have been used in several ways, for example to prove quasi-isometric rigidity results, see~\cite{KlL, KaL1,KaL2,BKMM,FLS}.
\par

We will use a nonstandard definition of asymptotic cones which is a restatement of the one based on ultrafilters. We will do that because the formalism is much lighter and, much more important, some elementary properties of nonstandard extensions turn out to be very powerful tools in studying asymptotic cones, as we will see. Indeed, most proofs involving the ultrafilter based definition of asymptotic cones contain the proof of some particular cases of these properties. Also, the nonstandard definition of asymptotic cones is closer to the idea of looking at a metric space from infinitely far away than the ultrafilter based one, which is closer to the original concept due to Gromov of convergence of rescaled metric spaces. This convergence is more difficult to ``visualize''.

\subsection*{Main results}

 We consider groups hyperbolic relative to a collection of proper subgroups and, more generally, groups with asymptotic cones having global cut-points. For the latter type of asymptotic cones we consider the \emph{minimal} tree-graded structure as defined in \cite{DS1} (i.e. a structure in which the pieces are all the connected subsets which consist of a single point or do not contain global cut-points). We refer to the cases mentioned above as the relatively hyperbolic case and the minimal case, respectively.
\par

We will provide a simple condition for relatively hyperbolic groups to have bilipschitz equivalent asymptotic cones (Theorem~\ref{comparable:thm}) and for asymptotic cones endowed with the minimal tree-graded structure to be bilipschitz equivalent (Theorem~\ref{cutpinthecone:thm}). The condition for relatively hyperbolic groups is also necessary in the case of groups hyperbolic relative to unconstricted subgroups (see Remark~\ref{uncostricted:rem}). A simplified version of the statement of Theorem~\ref{comparable:thm} is provided below. In what follows, the asymptotic cone of a group $G$ with scaling factor $\nu$ will be denoted by $C(G,\nu)$.

\begin{thm}\label{comparableintro:thm}
Let $G_0,G_1$ be groups hyperbolic relative to their proper subgroups $H_0, H_1$, respectively. Suppose that $C(H_0,\nu_0)$ is bilipschitz equivalent to $C(H_1,\nu_1)$ for some scaling factors $\nu_0,\nu_1$. Then $C(G_0,\nu_0)$ is bilipschitz equivalent to $C(G_1,\nu_1)$.
\end{thm}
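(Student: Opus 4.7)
\emph{Proof proposal.} The plan is to leverage the Dru\c{t}u--Sapir theorem, which endows each asymptotic cone $C(G_i,\nu_i)$ with a canonical tree-graded structure whose pieces are the ultralimits of left cosets of $H_i$ and are each isometric to $C(H_i,\nu_i)$. Write $\mathcal{P}_i$ for the collection of pieces. The hypothesis furnishes an $L$-bilipschitz homeomorphism $\phi\colon C(H_0,\nu_0)\to C(H_1,\nu_1)$. The goal is to upgrade $\phi$ to an $L$-bilipschitz homeomorphism $\Phi\colon C(G_0,\nu_0)\to C(G_1,\nu_1)$ by transporting it piece by piece.

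The first step is a general abstract lemma I would isolate: if $(F_0,\mathcal{P}_0)$ and $(F_1,\mathcal{P}_1)$ are complete tree-graded geodesic spaces whose pieces are all $L$-bilipschitz equivalent via maps that can be pre- and post-composed with self-isometries to send any point to any other, and whose transversal trees (obtained by collapsing each piece to a point) are both universal $\mathbb{R}$-trees with $2^{\aleph_0}$ branching at every point, then $F_0$ and $F_1$ are $L$-bilipschitz equivalent. Both hypotheses hold here: the pieces of $\mathcal{P}_i$ are isometric to the asymptotic cone of a group, which is homogeneous, so $\phi$ can be modified by isometries to send any chosen point of any piece of $\mathcal{P}_0$ to any chosen point of any piece of $\mathcal{P}_1$; and the transversal tree of a cone of a relatively hyperbolic group whose peripheral subgroup is a proper subgroup is a universal $\mathbb{R}$-tree of full branching.

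The construction of $\Phi$ would proceed by a back-and-forth transfinite induction over well-orderings of $\mathcal{P}_0$ and $\mathcal{P}_1$ of length $2^{\aleph_0}$. Fix a base piece in each cone and an $L$-bilipschitz identification between them via $\phi$. At each successor stage, pick an as-yet-unmatched piece $P\in\mathcal{P}_0$ attached to the already constructed subspace at a single cut-point $p$; choose a piece $Q\in\mathcal{P}_1$ meeting the already constructed image at $\Phi(p)$ (available because the transversal tree of $C(G_1,\nu_1)$ has continuum branching at $\Phi(p)$); define $\Phi$ on $P$ as $\phi$ composed with isometries of $C(H_0,\nu_0)$ and $C(H_1,\nu_1)$ so as to remain $L$-bilipschitz and send $p\mapsto \Phi(p)$. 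Alternate with the analogous step in the opposite direction to guarantee surjectivity. Global $L$-bilipschitzness then follows because in a tree-graded space the distance between any two points equals the sum of the lengths of the at most countable concatenation of piece-arcs comprising the unique topological geodesic between them, and $\Phi$ distorts each such arc by a factor in $[L^{-1},L]$ while preserving the combinatorial concatenation pattern.

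The main obstacle I anticipate is justifying the universality and uniform branching of the transversal tree: one needs to show that given any point of the already-built subspace of $C(G_1,\nu_1)$ arising as the image of a cut-point, there is a free piece of $\mathcal{P}_1$ attached there. This reduces to showing that the set of pieces through any given point has cardinality $2^{\aleph_0}$ in both cones, which should follow from the fact that ultralimits of peripheral cosets accumulate in continuum-many distinct pieces at every cut-point, using the nonstandard formalism advertised in the introduction. A secondary technical point is the coherence at limit stages of the transfinite recursion, where one must check that the union of the matched pieces remains tree-graded and that $\Phi$ extends continuously; this follows from the completeness of the cones and the fact that the union of a chain of tree-graded subspaces along cut-points is tree-graded. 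Once these two points are settled, the back-and-forth yields the required global bilipschitz equivalence.
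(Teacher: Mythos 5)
Your overall intuition---use the homogeneity of the pieces and the continuum branching of the transversal trees to match up the two cones---points in the same direction as the paper, and so does the use of Zorn-type/transfinite recursion. But the proposal contains a genuine gap at the step you call ``global $L$-bilipschitzness,'' and a secondary one in the statement of your abstract lemma.

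The crucial false step is the claim that ``in a tree-graded space the distance between any two points equals the sum of the lengths of the at most countable concatenation of piece-arcs comprising the unique topological geodesic between them.'' The paper explicitly flags this: it is \emph{not} true that geodesics in a tree-graded space are concatenations of arcs in pieces and transversal trees (Dru\c{t}u--Sapir's ``fractal'' geodesics). After adding transversal trees as pieces so that the transversal trees become trivial, the union of maximal piece-intervals along a geodesic is merely \emph{dense} in the parametrizing interval (Lemma~\ref{piecesubdense:lem}); its complement can be a nowhere-dense closed set of positive measure. Consequently, the piece-arcs do not exhaust the length, and you cannot conclude the global bilipschitz bound by adding up distortions of individual piece-arcs. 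The whole machinery of almost fillings, P-geodesics, and the stretching function $s_\Gamma$ in the paper exists precisely to account for this ``Cantor-dust'' remainder and to show that the induced endpoint reparametrization $s_\Gamma$ is $k$-bilipschitz (Remark~\ref{kbilip}). Without some equivalent of this, the map $\Phi$ you define on the dense piece-filled part need not extend to a bilipschitz map on the whole geodesic, and the transfinite recursion cannot be shown to preserve the bilipschitz constant through limit stages.

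Secondly, your abstract lemma is missing a hypothesis. Knowing that the pieces are all $L$-bilipschitz equivalent through point-transitive identifications and that both transversal trees are universal $\R$-trees with valency $2^{\aleph_0}$ does \emph{not} suffice to conclude that two tree-graded spaces are bilipschitz equivalent: you also need to know how many distinct germs of geodesics of each ``P-pattern'' emanate from a given point, i.e.\ the function the paper calls $H_{\F,p}$. This is exactly what the paper pins down in Proposition~\ref{F-1relhyp:prop} (value $1$ on the transversal class $w_t$, $2^{\aleph_0}$ on admissible non-transversal classes, $0$ otherwise), and it is the content that makes the Zorn's-lemma extension step in the proof of Theorem~\ref{univmakesense:thm} go through. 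You gesture at ``the set of pieces through any given point has cardinality $2^{\aleph_0}$,'' which is Lemma~\ref{cardpieces:lem}/Proposition~\ref{valency:prop}, but that only counts pieces, not germs of geodesics stratified by pattern, and is not enough to match things up correctly when you try to extend $\Phi$ at a cut-point where infinitely many pieces of different types accumulate. (A plain back-and-forth over a well-ordering of length $2^{\aleph_0}$ would also, without the counting input, typically require CH, which the paper explicitly avoids and which Osin--Sapir assume in the minimal case.) To repair the argument you would need to introduce something equivalent to P-geodesics and prove the counting result, at which point you have essentially reconstructed the paper's route through Propositions~\ref{valency:prop} and~\ref{F-1relhyp:prop} and Theorems~\ref{existence:thm} and~\ref{univmakesense:thm}.
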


D. Osin an M. Sapir proved this independently in~\cite{OS}.
\par
Note that the theorem implies, in particular, that if $G$ is hyperbolic relative to $H$ and the asymptotic cones of $H$ are all bilipschitz equivalent, then so are the asymptotic cones of $G$. This is the case when $H$ is, for example, virtually nilpotent. Therefore this consideration applies to the case of the fundamental groups of finite volume (complete, non-compact) manifolds of pinched negative curvature.
\par
We also obtain the following result, in the minimal structure case.
\begin{thm}\label{cutpinthecone:thm}
Suppose that $C(G_0),C(G_1)$ are asymptotic cones of the (non-virtually cyclic) groups $G_0, G_1$ and that they contain cut-points. Let $\calP_{i}$ be the minimal tree-graded structure on $C(G_i)\, ,\, i=1,2\, ,$ and suppose that for each $P\in\calP_j$ there exists $P'\in\calP_{j+1}$ such that $P$ and $P'$ are bilipschitz equivalent (with the same bilipschitz constant). Then $C(G_0)$ and $C(G_1)$ are bilipschitz equivalent.
\end{thm}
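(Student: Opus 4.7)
The plan is to build a bilipschitz bijection $\Phi:C(G_0)\to C(G_1)$ by matching the tree-graded structures piece by piece and gluing together bilipschitz maps between matched pieces via a transfinite back-and-forth argument.

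First I would upgrade the hypothesis using homogeneity. Each asymptotic cone $C(G_i)$ is a homogeneous metric space, and its minimal tree-graded structure $\calP_i$ is intrinsic, so every self-isometry of $C(G_i)$ permutes $\calP_i$. The maximality defining the non-singleton pieces forces them to be pairwise disjoint (the union of two overlapping ones would again be connected and contain no global cut-point), so each is determined by any of its points; hence the isometry group of $C(G_i)$ acts transitively on non-singleton pieces, and the stabiliser of any non-singleton piece $P$ acts transitively on $P$. Combined with the assumption, this yields a single constant $K$ such that for any non-singleton pieces $P\in\calP_0$ and $P'\in\calP_1$ and any prescribed $p\in P$, $p'\in P'$ there is a $K$-bilipschitz bijection $P\to P'$ sending $p$ to $p'$.

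Next I would perform the construction. Fix basepoints $x_0,x_1$ and well-orderings of $\calP_0$ and $\calP_1$, and set $\Phi(x_0)=x_1$. Build $\Phi$ by transfinite recursion, alternating: at odd stages pick the first unmatched $P\in\calP_0$ whose closure meets the already-matched region at a cut-point $c$ (so $\Phi(c)$ is already defined), pick any piece $P'\in\calP_1$ whose closure meets the matched region in $C(G_1)$ at $\Phi(c)$, and extend $\Phi|_P$ to a $K$-bilipschitz bijection $P\to P'$ with $c\mapsto\Phi(c)$; at even stages do the symmetric step on the $C(G_1)$ side via $\Phi^{-1}$. Take unions at limit ordinals. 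Since distinct pieces intersect in at most one point, no conflict can arise at a shared cut-point, and the back-and-forth interleaving guarantees that both $\calP_0$ and $\calP_1$ are exhausted.

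Finally, the global bilipschitz property follows from the basic decomposition result for tree-graded spaces (cf.~\cite{DS1}): any two points are joined by a unique topological arc whose intersection with each piece is a geodesic sub-arc, and the total distance is the sum of the sub-arc lengths. Since $\Phi$ carries the piece decomposition of the arc from $x$ to $y$ bijectively to that of the arc from $\Phi(x)$ to $\Phi(y)$ and is $K$-bilipschitz on each piece, summing sub-arc lengths gives $K^{-1}d(x,y)\leq d(\Phi(x),\Phi(y))\leq K\,d(x,y)$. The main obstacle I foresee is the book-keeping of the transfinite construction: ensuring that every piece is eventually reached by iterated adjacency from the basepoint (which rests on connectedness of $C(G_i)$ and on the fact that non-singleton pieces are separated only by singleton cut-point pieces) and that limit stages produce a consistent partial map. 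These points are routine once the structural ingredients in the first paragraph are made precise.
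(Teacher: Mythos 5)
Your high‑level strategy — build the bilipschitz bijection by a transfinite back‑and‑forth that matches pieces and glues local bilipschitz maps — is in the right spirit (the paper's Theorem~\ref{univmakesense:thm} is indeed proved by a Zorn's‑lemma extension argument). But as written the proposal has two genuine gaps, each of which the paper spends a substantial part of the argument to fill.

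First, the back‑and‑forth cannot close without knowing that the local branching data on the two sides match. At each stage you must extend over all geodesics leaving the already‑constructed region from a given point, and these sort into "initial patterns" (which piece type, or transversal tree, a geodesic starts into). For the extension to be consistent, the cardinality of geodesics realising each initial pattern must be the \emph{same} on both sides; otherwise you run out of targets (or leave targets uncovered). The paper proves exactly this via Propositions~\ref{transvmin:prop} and~\ref{F-1relhyp:prop}: the transversal tree has valency $2^{\aleph_0}$ and, for every admissible P‑geodesic pattern, the number of geodesics realising it is $2^{\aleph_0}$. Your proposal contains no analogue of this computation; without it the statement "the back‑and‑forth interleaving guarantees that both $\calP_0$ and $\calP_1$ are exhausted" is unsupported, and in fact it is false in general tree‑graded spaces where valencies differ.

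Second, the proposed distance estimate is based on a false structural claim. You write that any two points are joined by an arc which is a concatenation of piece sub‑arcs with the distance equal to the (finite) sum of their lengths, and that $\Phi$ carries this decomposition to the corresponding one on the other side. The paper explicitly warns (just before Lemma~\ref{piecesubdense:lem}) that geodesics in tree‑graded spaces can meet pieces along a \emph{dense} family of disjoint intervals with complement of positive measure contributed by transversal trees — the "fractal" geodesics of Dru\c{t}u--Sapir. The correct statement is only that the piece intervals are \emph{dense} in the parameter interval, and bookkeeping this requires precisely the "P‑geodesic" and "almost filling" formalism, together with the stretching function $s_\Gamma$ (Remark~\ref{kbilip}), to convert piecewise $K$‑bilipschitz maps into a global $K$‑bilipschitz map. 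Your final paragraph, treating it as a finite concatenation, does not survive this. A smaller but real error: two distinct pieces in the minimal structure \emph{can} share a point (the union of two pieces meeting in a point has that point as a cut‑point, so maximality is not violated); your argument that pieces are pairwise disjoint is therefore wrong, though this is not the main obstruction.

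In short, the paper's route is: compute the valency/pattern cardinalities ($2^{\aleph_0}$ in all non‑trivial cases), set up the P‑geodesic bookkeeping to describe initial patterns in the presence of fractal geodesics and transversal trees, build a universal tree‑graded space (Theorem~\ref{existence:thm}), and show both cones are bilipschitz equivalent to it (Theorem~\ref{univmakesense:thm}). Your proposal skips the first two of these ingredients entirely, and they are exactly what makes the back‑and‑forth work.
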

D. Osin an M. Sapir proved this in~\cite{OS} under the Continuum Hypothesis.
\par
The theorems above can be applied to study asymptotic cones of $3-$manifold groups \cite{Si3}.
\par
An important intermediate step in the proof of the theorems above is the following result of independent interest (see Propositions~\ref{valency:prop} and~\ref{transvmin:prop}).
\begin{thm}
In each asymptotic cone of a relatively hyperbolic group (resp. asymptotic cone a group containing a cut-point) endowed with the standard (resp. minimal) tree-graded structure, the valency of each transversal tree is $2^{\aleph_0}$.
\end{thm}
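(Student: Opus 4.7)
The plan is to establish the valency equals $2^{\aleph_0}$ by proving both bounds; the upper bound is soft, and the lower bound is the substantive part.

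For the upper bound, the asymptotic cone of a finitely generated group has cardinality at most $2^{\aleph_0}$ (assuming a countably incomplete ultrafilter on $\N$, as is standard), so any subset, in particular the set of branches of any transversal tree at any point, has size at most $2^{\aleph_0}$.

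For the lower bound, I would first use homogeneity: the ultrapower of $G$ acts transitively by isometries on the cone, and these isometries permute the tree-graded structure, so the valency does not depend on the choice of point. It therefore suffices to realize $2^{\aleph_0}$ distinct directions at \emph{some} point $p$ in \emph{some} transversal tree $T$. The key input I would extract from the hypotheses is that in both cases (standard structure for a relatively hyperbolic group, minimal structure for a group with cut-points) there exists a point $c$ of the cone that is a cut-point lying strictly between two other distinct points $x,y$, and moreover lies in a transversal tree. In the relatively hyperbolic case this follows from the transition-point structure of geodesics through peripheral pieces; in the minimal case this is essentially the hypothesis.

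Given this, the strategy is a Cantor-style branching construction. For each finite binary word $s$, I would produce inductively a pair of points $(x_s,y_s)$ and a cut-point $c_s$ lying in the interior of a geodesic $[x_s,y_s]$, such that $(x_{s0},y_{s0})$ lies strictly on one side of $c_s$ and $(x_{s1},y_{s1})$ on the other, with each new geodesic separated from all previous ones by a new cut-point and with diameters shrinking to $0$. The existence of the splitting cut-point at each stage comes from applying the above fact to each sub-segment, after rescaling by a suitable nonstandard factor and translating by a suitable element of the ultrapower so that the sub-segment looks like the original configuration. For each $\xi\in\{0,1\}^{\N}$ the nested sequence of geodesics converges (the cone is complete) to a point $p_\xi$, and the $2^{\aleph_0}$ sequences produce $2^{\aleph_0}$ distinct arcs in the transversal tree emanating from a common limit cut-point $p$ (obtained by a further limiting argument, or directly by starting the construction so that all the intervals nest down around $p$).

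The main obstacle I anticipate is the last point: ensuring that all $2^{\aleph_0}$ branches actually share a common endpoint $p$ and genuinely represent distinct directions at $p$ in one transversal tree, rather than being distributed over different transversal trees or partially coinciding inside a piece. I would handle this by arranging the construction around a distinguished cut-point and invoking the self-similarity of the cone: using the nonstandard/ultralimit framework, the $2^{\aleph_0}$ branches near $c_\emptyset$ can be "zoomed in" to accumulate at $p=c_\emptyset$, and the cut-point property ensures that the arcs from $p$ along distinct $\xi$ meet only at $p$, hence correspond to distinct valency-directions of the transversal tree at $p$.
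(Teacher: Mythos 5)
Your upper bound is fine and matches the paper's. The lower bound, however, has a genuine gap. The Cantor-style nested construction produces, for each $\xi\in\{0,1\}^{\N}$, a point $p_\xi$ in a transversal tree, but this is strictly weaker than what is needed: a real tree can contain a Cantor set of ends while every interior point has valency $2$ or $3$. To establish valency $2^{\aleph_0}$ at a point $p$ you need $2^{\aleph_0}$ arcs emanating from $p$ that pairwise intersect only in $\{p\}$; but in any nested-interval construction the branching occurs at the intermediate cut-points $c_s$, none of which equals $p$, so branches $\xi,\xi'$ agreeing on an initial segment share a non-trivial arc out of $p$ and define the \emph{same} direction at $p$. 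Your proposed remedy --- ``zoom in so the branches accumulate at $p$'' --- if carried out literally collapses all $p_\xi$ to $p$; if interpreted as a sequence of splittings $c_n\to p$, it yields only countably many directions at $p$, one per $c_n$. What is actually needed is a device placing $2^{\aleph_0}$ simultaneous branchings \emph{at} the point $p$, not approaching it, and this is exactly where your proposal stops. There is also a secondary gap: you do not explain why the arcs you build are transversal, i.e.\ meet each piece in at most one point; in the relatively hyperbolic case this is the main difficulty, since a generic geodesic in the cone spends a dense set of times inside pieces.

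The paper's proof runs along entirely different lines. In the relatively hyperbolic case it adjoins a hyperbolic element $g$ with its elementary closure $E(g)$ and passes to the finer peripheral structure including $E(g)$: the $2^{\aleph_0}$ cosets of the nonstandard extension of $E(g)$ through basepoint $e$ (Lemma~\ref{cardpieces:lem}) induce $2^{\aleph_0}$ bi-infinite lines through $e$, each one transversal for the original structure because it is a \emph{piece} of the finer structure, and any two meet only in $e$ by $(T_1)$. In the minimal-structure case, the paper first produces a single transversal ray $\gamma$ from $e$ by saturation over ``approximate transversal rays'' $\gamma_n$, characterized via a closest-point-projection property, and then uses overspill to find an internal family $g_1,\dots,g_\mu$ of group elements, with $\mu$ infinite, such that each $[g_i]=e$ while $d(g_iq,g_jq)\ge 2d(e,q)$ for $i\ne j$; the translates $g_i\gamma$ then diverge from $e$ immediately, yielding $2^{\aleph_0}$ distinct directions at $e$. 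This overspill step is precisely the idea your writeup lacks: it is what converts finitely-many-at-a-time branchings into $2^{\aleph_0}$ branchings all located at a single point.
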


This result has been proven independently in~\cite{OS} in the relatively hyperbolic case and under the Continuum Hypothesis, which we do not assume, also in the minimal case.
\par
For convenience and simplicity, we will fix an ultrafilter and state our results for asymptotic cones constructed using it. However, the proofs give analogue results when we allow the ultrafilter to change as well. This is made more precise in Remark~\ref{ultrafilter:rem}.
\par
This paper is based, for the most part, on the author's Master's thesis~\cite{Th}, defended at the University of Pisa on June 28, 2010.

\subsection*{Acknowledgments}
The author would like to greatly thank his Master's thesis advisor Roberto Frigerio. He is also grateful to Yves de Cornulier for helpful comments.

\section{Nonstandard extensions}

For the following sections we will need basic results about the theory of nonstandard extensions. The treatment will be rather informal, for a more formal one and the proofs of the results in this section see for example~\cite{Go}.
\par
Roughly speaking, nonstandard extensions are given by a ``$\st$ map'' such that
\begin{enumerate}
\item
for each set $X$, $\st X$ is a set naturally containing $X$,
\item
for each function $f:X\to Y$, $\st f: \st X\to\st Y$ is a function which extends $f$ (this makes sense in view of the inclusion $X\subseteq \st X$,
\item
if $R$ is a $n-$ary relation on $X$, $\st R$ is an $n-$ary relation on $\st X$ extending $R$.
\end{enumerate}
We will start by stating the basic properties of nonstandard extensions, and we will provide a construction only in the end. This is to emphasize that the properties are more important than the actual construction, and most of the times they are all that is needed to know.
\par
Let us first state a result about the properties that the ``standard world'' and the ``nonstandard world'' have in common, then we will study the extra properties of the nonstandard extensions.

\begin{defn}
A formula $\phi$ is \emph{bounded} if all quantifiers appear in expressions like $\forall x\in X$, $\exists x \in X$ (bounded quantifiers).
\par
The \emph{nonstandard interpretation of} $\phi$, denoted $\st\phi$, is obtained by adding $\st$ before any set, relation or function (not before quantified variables).
\end{defn}

An example will make these concepts clear: consider
$$\forall X\subseteq \N, X\neq\emptyset\ \exists x\in X\ \forall y\in X\ x\leq y,$$
which expresses the fact that any non-empty subset of $\N$ has a minimum. This formula is not bounded, because it contains ``$\forall X\subseteq \N$''. However, it can be turned into a bounded formula by substituting ``$\forall X\subseteq \N$'' with ``$\forall X\in\calP(\N)$''. The nonstandard interpretation of the modified formula reads
$$\forall X\in\st\calP(\N), X\neq\st\emptyset\ \exists x\in X\ \forall y\in X\ x\st\leq y.\ \ \ \ \ (1)$$
The following theorem will also be referred to as the \emph{transfer principle}.
\begin{thm}\label{transf:thm}
(\L o\v{s} Theorem) Let $\phi$ be a bounded formula. Then $\phi\iff\st\phi$.
\end{thm}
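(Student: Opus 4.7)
The plan is to prove the equivalence $\phi\iff\st\phi$ by induction on the structure of the bounded formula $\phi$, after fixing a concrete construction of the $\st$ map. I would take an index set $I$ together with a nonprincipal ultrafilter $\calU$ on $I$, and define $\st X = X^I/\sim_\calU$, where $\langle x_i\rangle_{i\in I}\sim_\calU \langle y_i\rangle_{i\in I}$ iff $\{i : x_i=y_i\}\in\calU$. Relations extend to $\st X$ by declaring $\st R([\langle x^1_i\rangle],\ldots,[\langle x^n_i\rangle])$ to hold iff $\{i : R(x^1_i,\ldots,x^n_i)\}\in\calU$, and functions extend coordinate-wise. The inclusion $X\hookrightarrow\st X$ sends each $x$ to the class of the constant sequence $\langle x\rangle$.

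The induction then proceeds through the usual cases. Atomic formulas are immediate from the definitions: if $R(x_1,\ldots,x_n)$ holds in the standard world, then $\{i : R(x_1,\ldots,x_n)\}=I\in\calU$, so $\st R(x_1,\ldots,x_n)$ holds on the constant-sequence classes, and conversely failure of $R$ means the set is empty, hence not in $\calU$. For Boolean combinations the three defining properties of the ultrafilter do the work: closure under finite intersection handles $\wedge$; the dichotomy that exactly one of a set and its complement belongs to $\calU$ handles $\neg$; and $\vee$ follows by De Morgan.

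The substantive step, and the main obstacle, is the inductive case of the bounded quantifiers, which is the heart of \L o\v{s}'s theorem. For $\forall x\in X\,\psi(x)$, one needs $\st\psi([\langle x_i\rangle])$ to hold for \emph{every} class in $\st X$; the inductive hypothesis rewrites this as the requirement $\{i : \psi(x_i)\}\in\calU$ for every representative, and if $\psi$ failed at some $x\in X$ in the standard world, the constant sequence $\langle x\rangle$ would furnish a counterexample to $\st\phi$. For $\exists x\in X\,\psi(x)$, a classical witness gives a nonstandard one via its constant sequence; conversely, a class $[\langle x_i\rangle]$ realizing $\st\psi$ forces $\{i : \psi(x_i)\}$ to be nonempty by the inductive hypothesis, producing a classical witness. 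Boundedness of $\phi$ is essential here: every quantifier must range over a fixed set such as $X$ or $\calP(\N)$ so that $\st X$ and the extended relations are already defined at the stage of the recursion where that quantifier is unpacked. Once the induction closes, the transfer principle follows.
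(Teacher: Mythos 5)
The paper does not actually prove this statement; it is cited as a standard fact with the reader referred to Goldblatt~\cite{Go}, and your outline is the canonical \L o\v{s}-theorem argument one finds there: build $\st$ as a bounded ultrapower, then induct on formula complexity, with the Boolean steps falling out of the ultrafilter axioms and the quantifier steps out of the interplay between pointwise witnesses and $\calU$-large index sets. Substantively this is the right route, and essentially the only one, so there is no competing argument in the paper to compare against.

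One structural imprecision is worth flagging, since it is the point on which such inductions most often come apart. The theorem as stated concerns \emph{sentences}, but your induction passes through subformulas $\psi$ with free variables, for which ``$\psi\iff\st\psi$'' is not even well-formed. The claim carried through the induction must be the full parameterized \L o\v{s} statement: for a bounded $\psi(v_1,\dots,v_n)$ and arbitrary classes $[\langle a^k_i\rangle]\in\st X_k$, one has $\st\psi$ of those classes if and only if $\{i:\psi(a^1_i,\dots,a^n_i)\}\in\calU$; the transfer principle is then the $n=0$ instance. You invoke exactly this parameterized form as the inductive hypothesis inside the quantifier cases, but your atomic base case is checked only on constant sequences (elements of $X$ rather than of $\st X$), so as written the induction does not close against itself. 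The repair is immediate — for arbitrary sequences the atomic case is simply the definition of $\st R$, and with function symbols it is the componentwise definition of $\st f$ — but the strengthened claim should be stated up front as what is being proved. A smaller omission: in the $\forall$ case you only spell out $\st\phi\Rightarrow\phi$ via constant-sequence counterexamples; the converse, though trivial (any representative of an element of $\st X$ gives $\{i:\psi(x_i)\}=I\in\calU$), should be recorded to complete the biconditional.
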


Easy consequences of this theorem are, for example, that the nonstandard extension $(\st G,\st\cdot)$ of a group $(G,\cdot)$ is a group, or that the nonstandard extension $(\st X,\st d)$ of a metric space $(X,d)$ is a $\st\R-$metric space (that is $\st d:\st X\times\st X\to\st\R$ satisfies the axioms of distance, which make sense as $\st\R$ is in particular an ordered abelian group). To avoid too many $\st$'s, we will often drop them before functions or relations, for example we will denote the ``distance'' on $\st X$ as above simply by ``$d$'', the order on $\st\R$ by ``$\leq$'' and the group operation on $\st G$ by ``$\cdot$''.
In view of the transfer principle, the following definition will be very useful:

\begin{defn}
$A\subseteq\st X$ will be called \emph{internal} subset of $X$ if $A\in\st\calP(X)$. An internal set is an internal subset of some $\st X$.
\par
$f:\st X\to\st Y$ will be called \emph{internal} function if $f\in\st(Y^X)=\st\{f:X\to Y\}$.
\end{defn}

One may think that ``living inside the nonstandard world'' one only sees internal sets and functions, and therefore, by the transfer principle, one cannot distinguish the standard world from the nonstandard world.
\par

The following inclusions can be proven using the transfer principle:
$$\{\st A: A\in\calP(X)\}\subseteq \st \calP(X)\subseteq \calP(\st X),$$
$$\{\st f: f\in Y^X\}\subseteq \st (Y^X) \subseteq (\st Y)^{\st X}.$$
The equalities are in general very far from being true, as we will see.
\par
Another example: the transfer principle applied to $(1)$, which tells that each non-empty subset of $\N$ has a minimum, gives that each \emph{internal} non-empty subset of $\st X$ has a minimum ($\st\emptyset=\emptyset$ as, for each set $A$, $\exists a\in A\iff \exists a\in\st A$).
\par
Let us now introduce a convention we will often use. For each definition in the ``standard world'' there exists a nonstandard counterpart. For example, the definition of geodesic (in the metric space $X$), yields the definition of $\st$geodesics, in the following way. The definition of geodesic (with domain the interval $[0,1]$, for simplicity) can be given as
$$\gamma\in X^{[0,1]}{\rm \ is\ a\ geodesic} \iff \forall x,y\in[0,1]\: d(\gamma(x),\gamma(y))=|x-y|,$$
Therefore the definition of $\st$geodesic can be given as
$$\gamma\in \st(X^{[0,1]}){\rm \ is\ a\ }\st{\rm geodesic} \iff \forall x,y\in\st[0,1]\: d(\gamma(x),\gamma(y))=|x-y|,$$
\par
Nonstandard extensions enjoy another property, which will be referred to as \emph{saturation}. First, a definition, and then the statement.

\begin{defn}
A collection of sets $\{A_j\}_{j\in J}$ has the \emph{finite intersection property} (FIP) if for each $n\in\N$ and $j_0,\dots,j_n\in J$, we have $A_{j_0}\cap\dots\cap A_{j_n}\neq\emptyset$.
\end{defn}

\begin{thm}\label{sat:thm}
Suppose that the collection of internal sets $\{A_n\}_{n\in\N}$ has the FIP. Then $\bigcap_{n\in\N} A_n\neq \emptyset$.
\end{thm}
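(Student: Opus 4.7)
The plan is to use the ultrapower model of the nonstandard extension that the paper will present at the end of this section. Fix a nonprincipal ultrafilter $\calU$ on $\N$ and set $\st X = X^\N/\calU$. The feature of this model that I would invoke is that every internal subset $A$ of $\st X$ is represented by a sequence $(A^{(k)})_{k\in\N}$ of subsets of $X$, in the sense that
$$A = \bigl\{[(x_k)]\in\st X : \{k\in\N : x_k\in A^{(k)}\}\in\calU\bigr\}.$$

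My first step is to apply \L o\v{s}'s theorem to the bounded statement ``$A_1\cap\dots\cap A_N\neq\emptyset$'', which holds by the FIP hypothesis. This produces, for each $N$, a set
$$U_N=\{k\in\N : A_1^{(k)}\cap\dots\cap A_N^{(k)}\neq\emptyset\}\in\calU.$$
I would then refine these to $V_N=U_N\cap\{k\geq N\}$; each $V_N$ is still in $\calU$, the $V_N$ are decreasing in $N$, and $\bigcap_N V_N=\emptyset$. This last equality is the crucial consequence of $\calU$ being nonprincipal on $\N$ (countable incompleteness is automatic here).

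The main step is a diagonal construction. For each $k\in\N$ let $N(k)=\max\{N\leq k : k\in V_N\}$ if this set is nonempty, and $N(k)=0$ otherwise; this is always a finite natural number. When $N(k)\geq 1$ pick $x_k\in A_1^{(k)}\cap\dots\cap A_{N(k)}^{(k)}$, which is nonempty because $k\in U_{N(k)}$; otherwise choose $x_k$ arbitrarily. Setting $x=[(x_k)]\in\st X$, I would then verify that $x\in A_n$ for every $n$: whenever $k\in V_n$ one has $N(k)\geq n$ by construction, hence $x_k\in A_n^{(k)}$, and so $\{k : x_k\in A_n^{(k)}\}\supseteq V_n\in\calU$, which is exactly the condition for $x\in A_n$ in the ultrapower description.

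The main obstacle is the familiar one in this kind of argument: finite consistency alone does not produce a common representative in the ultrapower, so the nested sequence $V_N$ with empty intersection has to be introduced in order to perform the diagonal choice. Once the ultrapower description of internal sets is available and \L o\v{s} is applied to the FIP, this diagonalisation is essentially the entire content of the proof.
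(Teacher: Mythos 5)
Your proof is correct. The paper itself does not prove this theorem (it defers all proofs in this section to Goldblatt's text), and your ultrapower argument is precisely the standard one from that reference: you express each internal $A_n$ by a representing sequence $(A_n^{(k)})_k$, invoke \L o\v{s} to convert the FIP hypothesis into the sets $U_N\in\calU$, refine to a decreasing sequence $V_N\in\calU$ with empty intersection (which is where nonprincipality of $\calU$, i.e.\ countable incompleteness, enters), and then diagonalise to build a representative lying in every $A_n$. The verification that $\{k:x_k\in A_n^{(k)}\}\supseteq V_n\in\calU$ closes the argument. The only cosmetic point is that you index from $1$ while the paper indexes over $\N$; this is a harmless reindexing since one may always replace $A_n$ by $A_0\cap\cdots\cap A_n$.
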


Let us use this theorem to prove that $\st\R$ contains infinitesimals. It is enough to consider the collection of sets $\{\st (0,1/n)\}_{n\in\N^+}$ and apply the theorem to it. Note that for $n\in\N^+$, $\st (0,1/n)\in\st\calP(\R)$ as it is of the form $\st A$ for $A\in\calP(\R)$.
More generally, however, for each $x,y\in\st\R$, $(x,y)\in\st\R$ (we should use a different notation for intervals in $\R$ and intervals in $\st\R$, but hopefully it will be clear from the context which kind of interval is under consideration). This can be proven using the transfer principle.
\par
Using this one can show the following.

\begin{lemma}\label{cof:lem}
Let $\{\xi_n\}_{n\in\N}$ be a sequence of infinitesimals. There exists an infinitesimal $\xi$ greater than any $\xi_n$.

\end{lemma}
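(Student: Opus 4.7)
The plan is to apply the saturation principle (Theorem~\ref{sat:thm}) to a suitable nested family of internal sets. The candidate for each $n\in\N^+$ is
\[
A_n \;=\; \{x\in\st\R : \xi_n < x < 1/n\},
\]
i.e.\ the $\st\R$-interval $(\xi_n,1/n)$. As noted just before the lemma, every interval of this form is internal, so $\{A_n\}_{n\in\N^+}$ is a countable collection of internal sets.

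First I would verify the finite intersection property. Given indices $n_1<\dots<n_k$, put $M=\max(\xi_{n_1},\dots,\xi_{n_k})$, which is a well-defined element of $\st\R$ and which is still infinitesimal because it is a finite maximum of infinitesimals. In particular $M<1/n_k$, so the intersection $A_{n_1}\cap\dots\cap A_{n_k}=(M,1/n_k)$ contains, for instance, any standard real in $(M,1/n_k)$, and is therefore non-empty. Hence the FIP holds.

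By Theorem~\ref{sat:thm}, there exists $\xi\in\bigcap_{n\in\N^+}A_n$. By construction $\xi>\xi_n$ for every $n$, and $0<\xi<1/n$ for every $n\in\N^+$, so $\xi$ is a positive infinitesimal, as required.

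I do not expect any serious obstacle: the only subtle point is making sure the sets one feeds to saturation are genuinely internal, and this is guaranteed here by the remark in the text that $(x,y)\subseteq\st\R$ is internal for every $x,y\in\st\R$ (a direct application of the transfer principle to the definition of an open interval). If one wanted to avoid even mentioning intervals, one could equivalently take $A_n$ to be $\st$-images of specific formulas, but the interval formulation seems cleanest.
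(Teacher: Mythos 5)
Your proof is correct and is exactly the argument the paper has in mind: the paper does not include a proof, but the sentence immediately preceding the lemma (``More generally, however, for each $x,y\in\st\R$, $(x,y)$ [is internal] \ldots Using this one can show the following'') is precisely the hint your construction implements, namely feeding the internal intervals $(\xi_n,1/n)$ to the saturation theorem. One trifling imprecision: from $\xi\in\bigcap A_n$ you get $\xi_n<\xi<1/n$ for all $n$, which gives $\xi>\xi_n$ and that $\xi$ is infinitesimal (bounded above by all $1/n$ and below by the infinitesimal $\xi_1$), but not necessarily that $\xi>0$ if all the $\xi_n$ happen to be negative; since the lemma only asks for an infinitesimal, not a positive one, this does not affect the conclusion.
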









Now, some lemmas which are frequently used when working with nonstandard extensions. The first one is usually referred to as \emph{overspill}:

\begin{lemma}
Suppose that the internal subset $A\subseteq\st\R^+$ (or $A\subseteq\st\N$) contains, for each $n\in\N$, an element greater than $n$. Then $A$ contains an infinite number.

\end{lemma}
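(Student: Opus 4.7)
The plan is to derive the result directly from the saturation principle (Theorem~\ref{sat:thm}). The intuition is simple: the hypothesis says $A$ meets every ``tail'' $\{x > n\}$ for $n \in \N$, so these nonempty tail-intersections form a countable family with the finite intersection property, and saturation will force a common element, which must then be infinite.

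Concretely, for each $n \in \N$, I would set
\[
A_n \;=\; A \cap \st\{x \in \R^+ : x > n\}
\]
(and analogously with $\N$ in place of $\R^+$ in the other case). Each $A_n$ is internal: it is the intersection of the internal set $A$ with the set $\st B_n$, where $B_n = \{x \in \R^+ : x > n\}$, and $\st B_n$ is internal by definition since $B_n \in \calP(\R^+)$. The collection $\{A_n\}_{n\in\N}$ has the finite intersection property: given $n_1 < n_2 < \dots < n_k$ in $\N$, we have $A_{n_1} \cap \dots \cap A_{n_k} = A_{n_k}$, which is nonempty precisely because of the hypothesis that $A$ contains an element greater than $n_k$.

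Applying Theorem~\ref{sat:thm} to $\{A_n\}_{n\in\N}$ produces an element $x \in \bigcap_{n \in \N} A_n$. Then $x \in A$ and $x > n$ for every standard $n \in \N$, which is exactly the definition of $x$ being an infinite number in $\st\R^+$ (resp.\ $\st\N$), completing the proof.

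There is essentially no obstacle here beyond the bookkeeping. The only point that deserves care is the verification of internality of the sets $A_n$, since countable unions or intersections of internal sets need not be internal; this is why the argument produces $A_n$ as an intersection of just two internal sets and then invokes saturation, rather than trying to take a countable intersection and manipulate it as a single internal set.
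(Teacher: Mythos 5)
Your proof is correct and is the canonical saturation/overspill argument. The paper states this lemma without proof (it is a standard fact from nonstandard analysis), but the route you take — defining $A_n = A \cap \st\{x \in \R^+ : x > n\}$, checking internality of each $A_n$ and FIP, then invoking Theorem~\ref{sat:thm} — is precisely what one would write, and all the details (internality of $\st B_n$, the nested structure giving $A_{n_1}\cap\cdots\cap A_{n_k}=A_{n_k}$, and the conclusion) are handled correctly.
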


\begin{lemma}\label{underspill:lem}
Suppose that the internal subset $A\subseteq\st\R^+$ is such that, for each $n\in\N^+$, $A\cap\{x\in\st\R:x<1/n\}\neq \emptyset$. Then $A$ contains an infinitesimal.

\end{lemma}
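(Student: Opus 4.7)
The plan is a direct application of the saturation principle (Theorem~\ref{sat:thm}), parallel in spirit to the use of saturation in the excerpt to show that $\st\R$ contains infinitesimals.

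For each $n \in \N^+$, I would set
\[
B_n \;=\; A \cap \st(0, 1/n).
\]
First I need to check that each $B_n$ is internal. The hypothesis gives that $A$ is internal. Moreover $(0,1/n) \in \calP(\R)$, so $\st(0,1/n) \in \{\st E : E \in \calP(\R)\} \subseteq \st\calP(\R)$, which is to say it is an internal subset of $\st\R$. A finite intersection of internal sets is internal (by transfer applied to the bounded statement ``the intersection of two sets in $\calP(\R)$ lies in $\calP(\R)$''), so each $B_n$ is internal.

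Next I would verify the finite intersection property of the family $\{B_n\}_{n \in \N^+}$. If $n_1, \dots, n_k \in \N^+$ and $N = \max\{n_1,\dots,n_k\}$, then
\[
B_{n_1} \cap \dots \cap B_{n_k} \;=\; A \cap \st(0, 1/N) \;=\; B_N,
\]
which is nonempty by the hypothesis of the lemma. Saturation (Theorem~\ref{sat:thm}) then yields an element $x \in \bigcap_{n \in \N^+} B_n$. Such an $x$ lies in $A \subseteq \st\R^+$ and satisfies $0 < x < 1/n$ for every standard $n \in \N^+$, so $x$ is a positive infinitesimal, as required.

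I do not anticipate any substantial obstacle: the only conceptual point is identifying that the ``easy'' interval $\st(0,1/n)$ is internal and that internality is preserved under the intersection with $A$, after which saturation does all the work. This is the underspill counterpart of the overspill lemma stated just above, and indeed the two proofs are formally dual.
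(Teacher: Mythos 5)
Your proof is correct and takes the expected route. The paper gives no proof of this lemma (it defers to~\cite{Go} for proofs of the results in this section), but the worked example immediately preceding it — applying saturation to $\{\st(0,1/n)\}_{n\in\N^+}$ to produce an infinitesimal in $\st\R$ — is precisely the argument you run, intersected with the internal set $A$, so your approach is the intended one.
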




Let us introduce some (quite intuitive) notation, which we are going to use from now on.

\begin{defn}
Consider $\xi,\eta\in\st\R$, with $\eta\neq 0$. We will write:
\begin{itemize}
\item
$\xi\in o(\eta)$ (or $\xi\ll\eta$ if $\xi,\eta$ are nonnegative) if $\xi/\eta$ is infinitesimal,
\item
$\xi\in O(\eta)$ if $\xi/\eta$ is finite,
\item
$\xi\gg\eta$ if $\xi,\eta$ are nonnegative and $\xi/\eta$ is infinite,
\item
$\xi\equiv\eta$ if $\xi\in O(\eta)\backslash o(\eta)$.
\end{itemize}


\end{defn}

The map we give by the following lemma plays a fundamental role in nonstandard analysis, and will be used in the definition of asymptotic cone:

\begin{prop}
There exists a map $st:O(1)\to\R$ such that, for each $\xi\in\st\R$, $\xi-st(\xi)$ is infinitesimal.

\end{prop}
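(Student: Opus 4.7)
The plan is to define $st(\xi)$ by a Dedekind cut in $\R$ and then verify the infinitesimal condition using the very definition of the supremum together with the convention that $\R$ is identified with its image in $\st\R$.

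More concretely, fix $\xi\in O(1)$, so there exists $n\in\N$ with $-n\leq \xi\leq n$ (using the order on $\st\R$ extending that on $\R$). Consider
\[
A_\xi=\{r\in\R: r\leq \xi\}.
\]
The set $A_\xi$ is non-empty (it contains $-n$) and bounded above in $\R$ (by $n$), so by the completeness of $\R$ we may set
\[
st(\xi):=\sup A_\xi\in\R.
\]
This defines a map $st:O(1)\to\R$. It remains to check that $\xi-st(\xi)$ is infinitesimal.

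Suppose for contradiction that $\xi-st(\xi)$ is not infinitesimal. Then there is some $n\in\N^+$ with $|\xi-st(\xi)|>1/n$, so either $\xi>st(\xi)+1/n$ or $\xi<st(\xi)-1/n$. In the first case, the real number $st(\xi)+1/(2n)$ satisfies $st(\xi)+1/(2n)\leq \xi$, so it lies in $A_\xi$ and exceeds $\sup A_\xi$, a contradiction. In the second case, for every $r\in A_\xi$ we have $r\leq \xi<st(\xi)-1/n$, so $st(\xi)-1/(2n)$ is an upper bound for $A_\xi$ strictly smaller than $\sup A_\xi$, again a contradiction. Hence $\xi-st(\xi)$ is infinitesimal, as required.

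There is no real obstacle here: the argument is essentially a clean application of Dedekind completeness of $\R$, and the only thing one needs from the nonstandard framework is the extended order on $\st\R$ and the identification $\R\subseteq \st\R$, both of which follow from the transfer principle (Theorem~\ref{transf:thm}). Note also that $st$ is automatically unique, since if two real numbers both differ from $\xi$ by an infinitesimal, their difference is both real and infinitesimal, hence zero.
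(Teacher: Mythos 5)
Your proof is correct. The paper actually does not prove this proposition at all — it is stated as a background fact, with the reader referred to Goldblatt's book for proofs of the results in that section. The argument you give is the standard one: define $st(\xi)$ as the supremum in $\R$ of the Dedekind cut $\{r\in\R: r\leq\xi\}$ (which exists by completeness of $\R$ since $\xi$ is finite), and then verify infinitesimality of $\xi-st(\xi)$ by the two-sided contradiction argument. Your case analysis is airtight, and the closing remark on uniqueness is a nice touch. This is exactly the proof one would find in the reference cited by the paper.
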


We will call $st(\xi)$ the standard part of $\xi$. Note that $st(\xi)=0\iff \xi$ is infinitesimal.

\subsection{Construction of nonstandard extensions}
In this section we will briefly describe the construction of nonstandard extensions which is (implicitly) used to define the asymptotic cones in virtually every article in which they are used.
\par
Fix a (non-principal) ultrafilter $\calU$ on $\N$. If $X$ is any set, consider the equivalence relation $\sim$ on $X^\N$ defined by
$$(x_n)_{n\in\N}\sim (y_n)_{n\in\N}\iff \{n\in N: x_n=y_n\}\in\calU$$
We can define, for each set $X$,
$$\st X= X^\N/_\sim.$$
The inclusion of $X$ into $\st X$ is given by constant sequences, that is $x\in X$ is mapped to the sequence with constant value $x$.
\par
Nonstandard extensions of functions and relations can be defined componentwise, for example if $f:X\to Y$ is any function we set $\st f([(x_n)_{n\in\N}])=[(f(x_n))_{n\in\N}]$.

\begin{conv}
The definition of the nonstandard extensions depends on the ultrafilter $\calU$. From now on we fix an ultrafilter $\calU$ on $\N$, and we will consider the nonstandard extensions constructed using $\calU$.
\end{conv}

The following remark gives the only property which we will need that depends on the particular construction of the nonstandard extensions we chose.

\begin{rem}
The nonstandard extension of a set of cardinality at most $2^{\aleph_0}$ has cardinality at most $2^{\aleph_0}$.
\end{rem}

Finally, a remark on how to translate the nonstandard language in the ultrafilter language, and vice versa.

\begin{rem}
Internal subsets (resp. functions) are ultralimits of subsets (resp. functions).
\end{rem}

\subsection{Asymptotic cones}

Let $(X,d)$ be a metric space. The asymptotic cones of $X$ are ``ways to look at $X$ from infinitely far away''. Let us make this idea precise.

\begin{defn}
Consider $\nu\in\st\R$, $\nu\gg 1$. Define on $\st X$ the equivalence relation $x\sim y\iff d(x,y)\in o(\nu)$.
The \emph{asymptotic cone $C(X,p,\nu)$ of $X$ with basepoint $p\in\st X$ and scaling factor $\nu$} is defined as
$$\{[x]\in\st X/\sim: d(x,p)\in O(\nu)\}.$$
The distance on $C(X,p,\nu)$ is defined as $d([x],[y])=st\left(\st d(x,y)/\nu\right)$.
\end{defn}

This definition of asymptotic cone is basically due to van den Dries and Wilkie, see~\cite{vDW}. However, the original concept is due to Gromov, see~\cite{Gr1}. The aim of~\cite{vDW} was to simplify the proofs in~\cite{Gr1}.
\par
Before proceeding, a few definitions. If $q\in\st X$ and $d(p,q)\in O(\nu)$, so that $[q]\in C(X,p,\nu)$, then $[q]$ will be called the \emph{projection of $q$ on }$C(X,p,\nu)$. Similarly, if $A\subseteq \{x\in\st X: d(x,p)\in O(\nu)\}$, the projection of $A$ on $C(X,p,\nu)$ is $\{[a]|a\in A\}$. If $A\subseteq \st X$ is not necessarily contained in $\{x\in\st X: d(x,p)\in O(\nu)\}$, we will call $\{[a]\in C(X,p,\nu)|a\in A\}$ the \emph{set induced by }$A$.
\par

Let us now introduce the asymptotic cones we are mostly interested in.

\begin{defn}
Let $G$ be a finitely generated group and $S$ a finite generating set for $G$. The \emph{asymptotic cone $C_S(G,g,\nu)$ of $G$ with basepoint $g\in\st G$ and scaling factor }$\nu\gg 1$ is $C(\mathcal{CG}_S(G),g,\nu)$, where $\mathcal{CG}_S(G)$ denotes the Cayley graph of $G$ with respect to $S$.

\end{defn}

Here are the basic properties of asymptotic cones of groups (see~\cite{Dr1}).

\begin{lemma}
For any finitely generated group $G$, finite generating sets $S,S'$, $g,g'\in\st\G$, $\nu\gg 1$:

\begin{itemize}
\item
$C_S(G,g,\nu)$ is complete, geodesic and homogeneous,
\item
$C_S(G,g,\nu)$ is isometric to $C_S(G,g',\nu)$,
\item
$C_S(G,g,\nu)$ is $k-$bilipschitz homeomorphic to $C_{S'}(G,g,\nu)$.
\end{itemize}

\end{lemma}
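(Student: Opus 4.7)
The plan is to verify each bullet via the transfer principle (Theorem~\ref{transf:thm}) and $\aleph_1$-saturation (Theorem~\ref{sat:thm}), which are the nonstandard avatars of the usual arguments from Drutu's paper.

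For completeness, I would pass from a Cauchy sequence $([x_n])$ in $C_S(G,g,\nu)$ to a subsequence with $d_C([x_{n_k}],[x_{n_{k+1}}])<2^{-k}$, and then apply saturation to the internal sets
\[
A_k=\bigl\{y\in\st G:\ d(x_{n_k},y)\le 2^{-k+1}\nu\bigr\}.
\]
Each $A_k$ is internal (an internal distance constraint on $\st G$), and the $A_k$ telescope, so any $x_{n_{k_m}}$ lies in $A_{k_j}$ for $j\le m$, giving the FIP. Saturation yields $y\in\bigcap_k A_k$; its projection $[y]$ is the desired limit. For geodesicity, given $[x],[y]\in C$ with $d_C([x],[y])=r>0$, transfer produces a $\st$geodesic $\gamma\colon \st[0,d(x,y)]\to\st G$ parameterized by arc length. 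The rescaled map $\bar\gamma(t):=[\gamma(td(x,y)/r)]$ is then an isometric embedding of $[0,r]$ into $C$ from $[x]$ to $[y]$, since $d(x,y)/(r\nu)$ has standard part $1$.

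The second and third bullets, together with homogeneity, follow from transferring left-invariance and the comparison of word metrics. For any $h\in\st G$, left translation $L_h$ is an isometry of $\st G$ by transfer. If $h=g'g^{-1}$, then $L_h$ carries the set $\{z:d(g,z)\in O(\nu)\}$ onto $\{w:d(g',w)\in O(\nu)\}$ and descends to a well-defined isometry $C_S(G,g,\nu)\to C_S(G,g',\nu)$ of the quotient metric, proving the second bullet. For homogeneity I would pick $[x],[y]\in C_S(G,g,\nu)$ and take $h=yx^{-1}$: by the triangle inequality and left invariance,
\[
d(g,hg)\le d(g,y)+d(y,hg)=d(g,y)+d(x,g)\in O(\nu),
\]
so $C(G,g,\nu)$ and $C(G,hg,\nu)$ are actually the same subset of $\st G/\!\sim$, and $L_h$ descends to a self-isometry of $C_S(G,g,\nu)$ sending $[x]$ to $[y]$. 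For the third bullet, the standard fact that on the finitely generated group $G$ the metrics $d_S$ and $d_{S'}$ are $k$-bilipschitz equivalent for $k=\max\{\max_{s\in S}|s|_{S'},\max_{s'\in S'}|s'|_S\}$ transfers to $(\st G,\st d_S)$ and $(\st G,\st d_{S'})$, so the identity of $\st G$ descends to a $k$-bilipschitz bijection $C_S(G,g,\nu)\to C_{S'}(G,g,\nu)$ (both the relation $\sim$ and the condition $d(g,\cdot)\in O(\nu)$ are preserved up to the factor $k$).

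The main obstacle, I think, is the subtlety in homogeneity: a priori $L_h$ only sends $C(G,g,\nu)$ to $C(G,hg,\nu)$, which is typically a different subset of $\st G/\!\sim$. The essential observation is that once one chooses $h=yx^{-1}$ with $[x],[y]$ \emph{both already in the cone}, the displacement $d(g,hg)$ is automatically $O(\nu)$, forcing the two cones to coincide as sets; this is what turns the isometry of $\st G$ into an honest self-isometry of the fixed cone. Everything else is essentially bookkeeping, using transfer to promote standard metric facts about $G$ to $\st G$ and then quotienting by $\sim$.
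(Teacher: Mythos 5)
The proposal is correct. The paper does not prove this lemma itself (it is stated with a citation to Dru\c{t}u~\cite{Dr1}), and your nonstandard argument is a sound restatement in the paper's formalism of the usual ultrafilter proofs: saturation applied to the internal balls $A_k$ gives completeness, transfer gives $\st$geodesics whose rescaled projections are geodesics in the cone, and the observation that $d(g,hg)\in O(\nu)$ forces $C(G,g,\nu)$ and $C(G,hg,\nu)$ to coincide as subsets of $\st X/\!\sim$ is precisely what is needed to turn left translation into a self-isometry for homogeneity.
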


When a finite generating set $S$ is fixed, we will often write $C(G,g,\nu)$ instead of $C_S(G,g,\nu)$.
\par
We will often identify $G$ with the set of vertices of $\mathcal{CG}_S(G)$.

\section{Tree-graded spaces}
The asymptotic cones we will be interested in have the structure which we will describe in this section. All results and definitions before Lemma~\ref{concat:lem} are taken from~\cite{DS1}.

\begin{defn}
A geodesic complete metric space $\F$ is \emph{tree-graded} with respect to a collection $\calP$ of closed geodesic subsets of $\F$ (called \emph{pieces}) that cover $\F$ if the following properties are satisfied:
\par
$(T_1)$ two different pieces intersect in at most one point,
\par
$(T_2)$ each geodesic simple triangle is contained in one piece.
\end{defn}


\begin{conv}
Throughout the section, let $\F$ denote a tree-graded space with respect to $\calP$.
\end{conv}



\begin{lemma}
Each simple \emph{loop} in $\F$ is contained in one piece.
\end{lemma}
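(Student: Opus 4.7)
The plan is to reduce the lemma to axiom $(T_2)$ about simple geodesic triangles by proving, as a stepping stone, the following stronger statement: if a topological arc $\eta$ in $\F$ has both endpoints in a common piece $P$, then $\eta\subseteq P$. Granted the stepping stone, the conclusion follows easily. Since pieces cover $\F$, some piece $P$ contains a point $p\in L$; using the local structure of a tree-graded space near $p$, one can produce a second point $q\in L\cap P$ distinct from $p$. Then $L$ is the union of two topological arcs from $p$ to $q$, each contained in $P$ by the stepping stone, so $L\subseteq P$.

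To prove the stepping stone I would argue by contradiction. Suppose $\eta:[0,1]\to\F$ has $\eta(0),\eta(1)\in P$ but $\eta$ exits $P$. Since $P$ is closed, $\eta^{-1}(\F\setminus P)$ is a nonempty open subset of $[0,1]$; pick a maximal open subinterval $(s,t)$. Then $\eta(s),\eta(t)\in P$, and the geodesic $\gamma=[\eta(s),\eta(t)]$ lies in $P$ because pieces are geodesic subsets. Concatenating $\eta|_{[s,t]}$ with $\gamma$ produces a loop $L'$. The plan is to select three points on $L'$ (one interior to $\gamma$ and two on $\eta|_{[s,t]}$, chosen near the ends of the excursion) so that the resulting geodesic triangle $T$ is simple; by $(T_2)$, $T$ is contained in a single piece $P'$. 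Since one side of $T$ is a positive-length arc of $\gamma\subseteq P$, the intersection $P\cap P'$ has more than one point, and $(T_1)$ forces $P'=P$. This contradicts the fact that $\eta|_{(s,t)}\subseteq\F\setminus P$, meeting the interior of $T$.

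The main obstacle will be ensuring the three selected points yield a \emph{simple} geodesic triangle, since $\eta|_{[s,t]}$ is only assumed topological and may be very wild. To address this I would refine the choice: shrink the excursion to a minimal sub-excursion (say, via a compactness argument on the set of ``first exit and first return'' pairs), and pick the two points on $\eta|_{[s,t]}$ extremely close to $\eta(s)$ and $\eta(t)$, so that the geodesics from them to the midpoint of $\gamma$ stay near $\gamma$ and cannot self-intersect the third side. If simplicity still fails, one subdivides further, treating the extra intersection points as new vertices of a geodesic polygon; an induction on the number of intersections reduces to the triangle case, where $(T_2)$ and $(T_1)$ apply. The degenerate situations — $\gamma$ collapsing to a point, or $\eta|_{[s,t]}$ coinciding with $\gamma$ — have to be excluded separately, but are trivial.
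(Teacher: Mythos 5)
The paper itself does not prove this lemma; it is cited from Dru\c{t}u--Sapir. So there is no proof in the paper with which to compare, and the question is simply whether your argument is sound. I think it has two genuine gaps.

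The first gap is at the very start of the reduction. You claim that, given a point $p\in L$ lying in a piece $P$, ``the local structure of a tree-graded space near $p$'' produces a second point of $L$ in $P$. No such structure has been established at this stage. Axioms $(T_1)$ and $(T_2)$ alone do not tell you that a simple loop cannot meet a piece in exactly one point; ruling that out is essentially the content of the lemma being proved. Every tool that would let you argue this (the projection $\pi_P$ being locally constant off $P$, or the fact that a $\calP$-transverse connected set lies in a real tree) appears \emph{after} this lemma in the logical development and is typically derived using it, so invoking it here risks circularity. Without this step you cannot decompose $L$ into two arcs with both endpoints in a common piece, and the reduction to your stepping stone does not get off the ground.

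The second gap is the one you flag yourself but do not close: extracting a \emph{simple geodesic} triangle from the loop $L'=\eta|_{[s,t]}\cup\gamma$. Axiom $(T_2)$ only applies to simple geodesic triangles. The arc $\eta|_{[s,t]}$ is a bare topological arc, and the geodesics you draw from points $a,b\in\eta|_{(s,t)}$ near the endpoints to the midpoint $m$ of $\gamma$ have no reason to ``stay near $\gamma$'': a tree-graded space carries no convexity or continuity-of-geodesics hypothesis, and at this point in the development you cannot even use projections to argue that a geodesic from $a$ to $m\in P$ must hit $P$ at a controlled point. It is entirely possible that $[a,m]$ and $[m,b]$ and $[a,b]$ meet one another, or meet $\gamma$, in complicated closed sets, and there is no a priori bound to make the proposed ``induction on the number of intersection points'' well founded. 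There is also the degenerate possibility that the three geodesics coincide in a tripod, in which case there is no simple triangle at all and $(T_2)$ says nothing. These are precisely the technical difficulties that the Dru\c{t}u--Sapir proof has to work around with a more elaborate limiting/approximation argument, and your sketch does not resolve them.

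In short: the overall shape (use $(T_2)$ and $(T_1)$ to trap a sub-excursion inside $P$) is the right instinct, but both the ``two points of $L$ in one piece'' step and the ``simple geodesic triangle'' step are unjustified, and the second one in particular is where the real work of the lemma lies.
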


In particular, for example, simple quadrangles are contained in one piece.
\par
The most powerful technical tool for studying tree-graded spaces are the projections defined in the following lemma.

\begin{lemma}
For each $P\in\calP$ there exists a map $\pi_P:\F\to P$, called the projection on $P$, such that for each $x\in\F$:
\begin{itemize}
\item
$d(x,P)=d(x,\pi_P(x))$,
\item
each curve (in particular each geodesic) from $x$ to a point in $P$ contains $\pi_P(x)$,
\item
$\pi_P$ is locally constant outside $\F$. In particular, if $A\subseteq \F$ ($A\neq\emptyset$) is connected and $|A\cap P|\leq 1$, $\pi_P(A)$ consists of one point.
\end{itemize}
\end{lemma}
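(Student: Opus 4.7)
The plan is to establish the three bullets in order, with the projection $\pi_P(x)$ defined as the first point along any geodesic from $x$ to a point of $P$ that enters $P$, and with all three assertions ultimately reducing to the preceding ``simple loop in a piece'' lemma together with $(T_1)$--$(T_2)$.

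For the first bullet, set $\pi_P(x)=x$ if $x\in P$; otherwise fix any geodesic $\gamma$ from $x$ to any $p\in P$ and let $q:=\pi_P(x)$ be the first point of $\gamma$ lying in $P$ (well-defined since $P$ is closed). The main claim is that $q$ depends only on $x$ and realizes $d(x,P)$. If $\gamma'$ from $x$ to $p'\in P$ has first $P$-entry $q'\ne q$, I would join $q$ and $q'$ by a geodesic $\delta\subseteq P$ (possible since pieces are geodesic) and form the geodesic triangle $\gamma|_{[x,q]}\cdot\delta\cdot(\gamma'|_{[x,q']})^{-1}$. After discarding an initial segment on which $\gamma$ and $\gamma'$ coincide and simplifying self-intersections to isolate a simple geodesic sub-triangle, $(T_2)$ puts that sub-triangle inside a single piece, and $(T_1)$ identifies this piece with $P$ since the sub-triangle shares with $P$ a non-trivial arc of $\delta$; but then an interior point of $\gamma|_{[x,q]}$ would lie in $P$, contradicting minimality of $q$. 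Applying the same argument with $p$ replaced by a sequence of distance-minimizers in $P$ then yields $d(x,q)=d(x,P)$.

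For the second bullet, let $c\colon[0,1]\to\F$ be a curve with $c(0)=x$ and $c(1)\in P$. Trimming $c$ to $c|_{[0,t_0]}$ for $t_0:=\inf\{t:c(t)\in P\}$ reduces to the case $c\cap P=\{p_0\}$. Let $\gamma$ be a geodesic from $x$ to $p_0$; step~1 places $q:=\pi_P(x)$ on $\gamma$. Assuming $q\ne p_0$, let $\delta\subseteq P$ be a geodesic from $q$ to $p_0$ and form the closed loop $L=c\cdot\delta^{-1}\cdot(\gamma|_{[x,q]})^{-1}$ based at $x$. Then $L$ contains the non-trivial arc $\delta^{-1}\subseteq P$ as well as points outside $P$. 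Invoking the previously established fact that every simple loop in $\F$ lies in a single piece, I would decompose $L$ into simple sub-loops and isolate one which still contains a non-trivial arc of $\delta$; that sub-loop then lies in a piece, which by $(T_1)$ must be $P$, forcing an adjacent portion of $c$ or of $\gamma|_{[x,q]}$ to lie in $P$, the required contradiction. Hence $q$ lies on $c$.

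For the third bullet, given $x\notin P$ set $\epsilon:=d(x,P)/2$. For $y\in B(x,\epsilon)$, concatenating a geodesic $y\to x$ with $\gamma|_{[x,q]}$ yields a curve from $y$ to $q$ whose image meets $P$ only at $q$, since every preceding point either lies within $\epsilon$ of $x$, hence at distance at least $\epsilon$ from $P$, or sits strictly before the first $P$-hit $q$ along $\gamma$. Step~2 applied to this curve gives $\pi_P(y)=q=\pi_P(x)$, so $\pi_P$ is locally constant on $\F\setminus P$. The ``in particular'' consequence then follows from local path-connectedness of $\F$ (any two points in an open ball are joined by a geodesic that stays in the ball), which makes the components of the open set $\F\setminus P$ open: a connected $A\subseteq\F$ with $A\cap P=\emptyset$ lies in one such component and is mapped to a point by the locally constant $\pi_P$; if $A\cap P=\{q\}$, then connectedness of $A$ forces $q\in\overline{A\cap U}$ for every component $U$ of $\F\setminus P$ meeting $A$, and sequences $a_n\in A\cap U$ with $a_n\to q$ satisfy $d(a_n,\pi_P(a_n))=d(a_n,P)\le d(a_n,q)\to 0$, pinning the constant value of $\pi_P$ on $A\cap U$ to $q$. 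The main obstacle is the simplification step common to bullets~1 and~2: extracting from a possibly self-intersecting closed curve a simple sub-loop that still contains both a non-trivial arc inside $P$ and a point outside $P$, so that $(T_2)$ or the simple-loop lemma combined with $(T_1)$ delivers the contradiction.
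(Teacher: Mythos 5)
The paper does not actually prove this lemma; it is one of the facts quoted from Dru\c{t}u--Sapir \cite{DS1} (``All results and definitions before Lemma~\ref{concat:lem} are taken from~\cite{DS1}''), so there is no in-paper argument to compare against. Judging your proposal on its own terms, the overall strategy --- define $\pi_P(x)$ as the first $P$-entry point along a geodesic, reduce both the well-definedness claim and the ``every curve passes through $\pi_P(x)$'' claim to the simple-loop lemma and $(T_1)$, and then deduce local constancy by concatenating a short geodesic $y\to x$ with $\gamma|_{[x,q]}$ --- is sound, and your bullet~3, including the ``in particular'' clause handled via openness of components of $\F\setminus P$ and the $a_n\to q$ limit argument, is correct once the first two bullets are secured.

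The genuine gap is exactly the one you flag in your last sentence, and the specific remedy you propose does not clearly work. You build a closed, possibly self-intersecting curve $L$ containing a non-trivial arc $\delta\subseteq P$ and a point outside $P$, and then ask to ``decompose $L$ into simple sub-loops and isolate one which still contains a non-trivial arc of $\delta$.'' There is no general theorem producing such a decomposition: the image of a loop in a metric space need not contain any simple closed curve through a prescribed sub-arc, and even when a decomposition into simple sub-loops exists, the portion lying inside $P$ can be split among several of them or absorbed into sub-loops staying entirely in $P$. The correct move is not to decompose the loop but to \emph{avoid building a self-intersecting loop in the first place}: extract an \emph{arc} $\alpha$ from $q$ to $q'$ (bullet~1) or from $q$ to $p_0$ (bullet~2) inside the compact path-connected set $\gamma|_{[x,q]}\cup\gamma'|_{[x,q']}$ (resp.\ $\gamma|_{[x,q]}\cup c$), using the standard topological fact that in a Hausdorff space the image of any path contains an arc joining its endpoints. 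By construction $\alpha$ meets $P$ exactly in its two endpoints, so $\alpha\cup\delta$ is automatically a simple loop with a non-trivial sub-arc in $P$ and interior points of $\alpha$ outside $P$; the simple-loop lemma places it in a piece $P'$, $(T_1)$ forces $P'=P$, and the presence of points of $\alpha$ outside $P$ gives the contradiction. With this replacement your outline closes up into a full proof; as written, the ``simplify self-intersections'' / ``decompose into simple sub-loops'' step is a real gap, not merely an omitted routine detail.
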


Note that if $x\in P$, then $\pi_P(x)=x$. The following results can be easily proven using projections.

\begin{cor}\label{emptypointsubgeod:cor}
Each arc (i.e. injective path) connecting 2 points of a piece $P$ is contained in $P$. In particular the intersection between a geodesic and a piece is either empty, a point or a subgeodesic.
\end{cor}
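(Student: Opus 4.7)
The plan is to use the projection $\pi_P$ provided by the previous lemma, exploiting the key property that every curve from $x\in\F$ to a point in $P$ must pass through $\pi_P(x)$. This, combined with injectivity of the arc, will force any point on the arc to actually lie in $P$.

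In detail, let $\alpha:[0,1]\to\F$ be an injective path with $\alpha(0),\alpha(1)\in P$. Suppose for contradiction that there exists $t\in(0,1)$ with $z:=\alpha(t)\notin P$. The subpath $\alpha|_{[0,t]}$, reversed, is a curve from $z$ to $\alpha(0)\in P$, so by the projection lemma it contains $\pi_P(z)$; likewise $\alpha|_{[t,1]}$ is a curve from $z$ to $\alpha(1)\in P$, so it also contains $\pi_P(z)$. Since $\alpha$ is injective, the two subpaths meet only at $z$, forcing $\pi_P(z)=z$, whence $z\in P$, a contradiction. Therefore $\alpha([0,1])\subseteq P$.

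For the second assertion, let $\gamma:[0,L]\to\F$ be a geodesic (in particular an arc) and suppose $\gamma\cap P\neq\emptyset$. Set $a=\inf\{s:\gamma(s)\in P\}$ and $b=\sup\{s:\gamma(s)\in P\}$; since $P$ is closed, both $\gamma(a)$ and $\gamma(b)$ lie in $P$. Applying the first part to the arc $\gamma|_{[a,b]}$ (whose endpoints are in $P$) we get $\gamma([a,b])\subseteq P$, and by definition of $a$ and $b$ no other point of $\gamma$ is in $P$. Hence $\gamma\cap P=\gamma([a,b])$, which is a single point when $a=b$ and a subgeodesic otherwise.

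There is no real obstacle here: the only subtle point is making sure that the projection property is applied to both halves of the arc and that injectivity is used to conclude $\pi_P(z)=z$. Closedness of $P$ is needed only at the very end to obtain $\gamma(a),\gamma(b)\in P$ so that the first part applies.
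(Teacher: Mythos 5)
Your proof is correct. The paper states this corollary without proof (citing \cite{DS1}), but it does prove the identical assertion later in the projection-system setting, namely point~$(2)$ of Lemma~\ref{propproj:lem}, and your argument is essentially a variant of that one: both hinge on the property that every curve from $x$ to a point of $P$ passes through $\pi_P(x)$, applied to pieces of the given arc. The difference is in how the contradiction is extracted. The paper's version passes to a maximal subarc $\gamma'$ around a point $x\notin P$ whose intersection with $P$ consists exactly of its two endpoints $x_1\neq x_2$ (this step uses closedness of $P$), and then argues $\pi_P(x)=x_1$ and $\pi_P(x)=x_2$. You instead keep the original arc, apply the projection property to the two subpaths $\alpha|_{[0,t]}$ and $\alpha|_{[t,1]}$, and use injectivity of $\alpha$ to force $\pi_P(z)=z\in P$. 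Your route is slightly leaner in that it avoids the extraction of a maximal $P$-avoiding subarc for the first assertion, invoking closedness of $P$ only where it is genuinely needed (to make $\gamma(a),\gamma(b)\in P$ in the second assertion). Both arguments are valid and equally short; what the paper's version buys is a formulation that works verbatim in the axiomatic projection-system setting where the same statement is later reproved.
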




\begin{cor}\label{diffproj:cor}
If $x,y$ are such that $\pi_P(x)\neq\pi_P(y)$, for some piece $P$, then any geodesic $\delta$ from $x$ to $y$ intersects $P$.
\end{cor}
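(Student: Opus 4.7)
The plan is very short, since the third bullet of the projection lemma is tailor‑made for exactly this kind of statement. First I would argue by contradiction: assume that $\delta\cap P=\emptyset$, so in particular $|\delta\cap P|\leq 1$. Then I would invoke the fact that $\delta$, being (the image of) a geodesic, is connected, so the ``in particular'' clause of the projection lemma applies to $A=\delta$ and yields that $\pi_P(\delta)$ consists of a single point. Since $x,y\in\delta$, this gives $\pi_P(x)=\pi_P(y)$, contradicting the hypothesis. Hence $\delta\cap P\neq\emptyset$.

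So the whole argument is: connectedness of $\delta$ $+$ local constancy of $\pi_P$ off $P$ $\Rightarrow$ $\pi_P$ is constant on any $\delta$ that misses $P$. The only step that needs a brief justification is the appeal to connectedness of the geodesic $\delta$, which is immediate (a geodesic is by definition the image of an isometric embedding of an interval). No saturation, no transfer, no tree‑graded axioms beyond what is already packaged into the projection lemma are needed.

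I do not foresee an obstacle: the statement is really a direct corollary of the third bullet of the projection lemma, and that is presumably why the author chose to present it immediately afterwards. The only mildly delicate point is notational — strictly speaking one should apply the projection lemma to the subset $A=\delta\subseteq\F$ (not to $\delta$ viewed as a parametrised path), but this is harmless since both interpretations are connected.
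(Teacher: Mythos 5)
Your argument is correct and is exactly the intended one: the paper states this corollary without proof immediately after the projection lemma, under the heading that ``the following results can be easily proven using projections,'' and the application of the ``in particular'' clause of the third bullet to the connected set $A=\delta$ is the natural justification. No substantive difference from the paper's implicit approach.
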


Another useful concept is that of transversal tree, defined below.

\begin{defn}\label{transv:defn}
For each $x\in\F$ denote by $T_x$ the set of points $y\in\F$ such that there exists a path joining $x$ to $y$ which intersects each piece in at most one point. Such sets will be called \emph{transversal trees}.
\end{defn}

Basic properties of \emph{transversal trees} are given below.

\begin{lemma}
For each $x\in\F$
\begin{itemize}
\item
$T_x$ is a real tree,
\item
$T_x$ is closed in $\F$,
\item
if $y\in T_x$, then $T_x=T_y$,
\item
every arc joining $y,x\in T_x$ is contained in $T_x$.
\end{itemize}
\end{lemma}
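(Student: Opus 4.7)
The plan is to reduce all four items to a single projection-preservation fact $(\star)$: if $y\in T_x$ then $\pi_P(x)=\pi_P(y)$ for every piece $P$. This is immediate from the last bullet of the projection lemma, since a witnessing path $\beta$ from $x$ to $y$ is a connected set with $|\beta\cap P|\leq 1$, forcing $\pi_P(\beta)$ to be a single point.

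Granting $(\star)$, property (3) is straightforward: given $y\in T_x$ and $z\in T_x$, concatenate a witness path from $y$ to $x$ with one from $x$ to $z$; for each piece $P$, if either subpath meets $P$ at all then that intersection equals $\{\pi_P(x)\}$ by $(\star)$, so the concatenation meets $P$ in at most one point and hence $z\in T_y$. For (4), let $\alpha$ be an arc between two points $y,z\in T_x$; by Corollary~\ref{emptypointsubgeod:cor} the parameter set $\alpha^{-1}(P)$ is convex, so $\alpha\cap P$ is empty, a single point, or a subarc, and in the last case its endpoints $a\neq b$ would satisfy $a=\pi_P(y)$ and $b=\pi_P(z)$, contradicting the equality $\pi_P(y)=\pi_P(z)$ supplied by $(\star)$. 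Thus $\alpha\subseteq T_x$. Property (1) is now immediate: $\F$ is geodesic, so any two points of $T_x$ are joined by a geodesic that lies in $T_x$ by (4); uniqueness uses that two distinct arcs between the same endpoints contain a non-trivial simple loop, which by the earlier lemma lies in a single piece, while (4) applied to the two sub-arcs composing the loop would give two non-trivial arcs of $T_x$ sitting inside one piece, a contradiction.

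The main obstacle, which I address last, is closure. Given $y_n\to y$ with $y_n\in T_x$, I aim to show that any ambient geodesic $\gamma$ from $x$ to $y$ meets each piece $P$ in at most one point, which will place $y$ in $T_x$. Suppose otherwise; then by Corollary~\ref{emptypointsubgeod:cor}, $\gamma\cap P$ is a sub-geodesic with distinct endpoints $a=\pi_P(x)$ (the first meeting point of $\gamma$ with $P$) and $b=\pi_P(y)$. By $(\star)$, $\pi_P(y_n)=a$ for all $n$, so $d(y_n,a)=d(y_n,P)$; passing to the limit and using continuity of the distance to the closed set $P$ yields $d(y,a)=d(y,P)$. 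Hence $a$ is a point of $P$ realising the distance from $y$, and uniqueness of such a point (a formal consequence of the two defining properties of $\pi_P$ in a geodesic space) forces $a=\pi_P(y)=b$, contradicting $a\neq b$.
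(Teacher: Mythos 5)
This lemma is one of the preliminary facts the paper quotes from Dru\c{t}u--Sapir rather than proving itself (the paper explicitly says everything before Lemma~\ref{concat:lem} is taken from~\cite{DS1}), so there is no in-paper proof to compare against. Judged on its own merits, your argument is correct, and reducing everything to the single projection-invariance fact $(\star)$ is an economical and valid route. Two small steps are asserted faster than they are justified and deserve a sentence each. First, in (3) you say that if one of the witness sub-paths meets $P$ then the intersection ``equals $\{\pi_P(x)\}$ by $(\star)$''; this is true but not immediate from $(\star)$ alone: you should split the transverse path at the meeting point $p_1$ and apply the last bullet of the projection lemma to each half to see that $p_1=\pi_P(y)=\pi_P(x)$. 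Second, the ``contradiction'' at the end of the uniqueness argument for (1) is never named: a non-trivial arc of $T_x$ inside a piece is impossible because $T_x\cap P\subseteq\{\pi_P(x)\}$ (indeed if $u\in T_x\cap P$ then $u=\pi_P(u)=\pi_P(x)$ by $(\star)$ and $\pi_P|_P=\mathrm{id}$), and it would be cleaner to record this observation once, since it also streamlines (4). The closure argument in (2) is the most delicate part and you handle it correctly: continuity of $d(\cdot,P)$ passes $d(y_n,a)=d(y_n,P)$ to the limit, and uniqueness of the nearest point (which does follow from the second bullet of the projection lemma applied to a geodesic $[y,a']$, as you parenthetically note) forces $a=\pi_P(y)=b$. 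Overall a sound proof.
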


The next lemma will be used a lot of times.

If $\beta$ and $\gamma$ are geodesics such that the final point of $\beta$ is the initial point of $\gamma$, we will denote their concatenation by $\beta\gamma$.

\begin{lemma}\label{concat:lem}

Suppose that $\gamma_1$ and $\gamma_2$ are geodesics or geodesic rays in a tree-graded space such that
\begin{enumerate}
\item
the final point $p$ of $\gamma_1$ is the starting point of $\gamma_2$,
\item
$\gamma_1\cap\gamma_2=\{p\}$,
\item
there is no piece containing a final subpath of $\gamma_1$ and an initial subpath of $\gamma_2$.
\end{enumerate}
Then $\gamma_1\gamma_2$ is a geodesic (or a geodesic ray or a geodesic line). Also, each geodesic from a point in $\gamma_1$ to a point in $\gamma_2$ contains $p$.

\end{lemma}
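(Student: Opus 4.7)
The plan is to prove the second assertion first, namely that every geodesic $\delta$ from a point $x\in\gamma_1$ to a point $y\in\gamma_2$ contains $p$; the geodesic statement then follows immediately, since this gives $d(x,y)=d(x,p)+d(p,y)$ for all such $x,y$, and the cases $x,y\in\gamma_i$ are handled by $\gamma_i$ already being geodesic.

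To show $p\in\delta$, I would argue by contradiction. Assume $p\notin\delta$. Let $q_1$ be the last point of $\delta$ (in the parametrization from $x$ to $y$) lying on $\gamma_1$, and let $q_2$ be the first point of $\delta$ after $q_1$ lying on $\gamma_2$; both exist because $x\in\gamma_1$ and $y\in\gamma_2$. Since $p\notin\delta$ and $\gamma_1\cap\gamma_2=\{p\}$ by assumption $(2)$, we have $q_1\ne p$, $q_2\ne p$, and $q_1\ne q_2$. Let $\alpha$ be the subpath of $\gamma_1$ from $q_1$ to $p$, $\beta$ the subpath of $\gamma_2$ from $p$ to $q_2$, and $\epsilon$ the subpath of $\delta$ from $q_1$ to $q_2$. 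Then $\alpha\beta\epsilon^{-1}$ is a geodesic triangle; it is simple by the choice of $q_1,q_2$ (which prevents $\epsilon$ from meeting $\gamma_1\cup\gamma_2$ except at its endpoints) together with assumption $(2)$ (which ensures $\alpha$ and $\beta$ meet only at $p$).

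By property $(T_2)$ of tree-graded spaces, this simple geodesic triangle is contained in some single piece $P\in\calP$. In particular $\alpha\subseteq P$ is a final subpath of $\gamma_1$ and $\beta\subseteq P$ is an initial subpath of $\gamma_2$, directly contradicting assumption $(3)$. Hence $p\in\delta$, as desired.

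Finally, given any $x\in\gamma_1$ and $y\in\gamma_2$, having $p$ on every geodesic from $x$ to $y$ yields $d(x,y)=d(x,p)+d(p,y)$, which equals the concatenated length along $\gamma_1\gamma_2$. Combined with the fact that $\gamma_1$ and $\gamma_2$ are themselves geodesic, this shows $\gamma_1\gamma_2$ is an isometric embedding of the appropriate interval (or half-line or line, depending on the domains of $\gamma_1,\gamma_2$). I do not expect a serious obstacle here; the only delicate point is the simplicity check for the triangle, which is why I introduce $q_1$ and $q_2$ as extremal intersection points rather than working with the a priori non-simple loop $\gamma_1|_x^p\cdot\gamma_2|_p^y\cdot\delta^{-1}$.
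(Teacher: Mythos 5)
Your proof is correct, and it takes a genuinely different (arguably cleaner) route than the paper's. The paper proves the "concatenation is a geodesic" claim first: it fixes $q\in\gamma_1$, $r\in\gamma_2$ with $d(q,r)<d(q,p)+d(p,r)$, forms a geodesic triangle $p,q,r$ with $[q,p]\subseteq\gamma_1$, $[p,r]\subseteq\gamma_2$, argues via $(2)$ that the triangle is not a tripod, invokes (implicitly, from the Dru\c{t}u--Sapir theory) the existence of a piece $P$ meeting both $[q,p]$ and $[p,r]$ non-trivially, rules out $p\in P$ via $(3)$, and then derives a contradiction from the projection $\pi_P(p)\neq p$ being on both $\gamma_1$ and $\gamma_2$; the second assertion is then noted to be "similar." By contrast, you prove the "every geodesic between $\gamma_1$ and $\gamma_2$ contains $p$" claim as primary and deduce the geodesic assertion as an immediate corollary, and your contradiction comes from constructing an explicitly \emph{simple} triangle (by taking $q_1,q_2$ extremal on $\delta$) and applying the axiom $(T_2)$ directly, with no need for the centre/projection machinery. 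This avoids the implicit appeal to the fact that non-tripod triangles in tree-graded spaces admit a piece meeting two sides, which the paper relies on but does not re-derive here. Both arguments are correct; yours is somewhat more self-contained and gives the sharper conclusion first.
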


\begin{proof}
If the conclusion were false, we would have points $q\in\gamma_1$, $r\in\gamma_2$ such that $d(q,r)<d(q,p)+d(p,r)$. Consider a geodesic triangle with vertices $p,q,r$ and $[q,p]\subseteq\gamma_1$, $[p,r]\subseteq\gamma_2$. Condition $(2)$ and $d(q,r)<d(q,p)+d(p,r)$ imply that it cannot be a tripod, for otherwise $[q,p]\cap[p,q]$ should contain a non-trivial geodesic. Therefore there exists a piece $P$ intersecting both $[q,p]$ and $[p,r]$. Condition $(3)$ implies that $P$ does not contain $p$. But then both $[q,p]$ and $[p,r]$ should pass through $\pi_P(p)\neq p$, which contradicts $(2)$.
\par
The last part of the statement has a similar proof.\end{proof}




\begin{defn}
We will say that $\gamma_1$ and $\gamma_2$ \emph{concatenate well} if they satisfy the hypothesis of Lemma~\ref{concat:lem}.
\end{defn}



\subsection{Alternative definition of tree-graded spaces}
In this subsection we give a characterization of tree-graded spaces that will turn out to be more effective in the proof that certain spaces are tree-graded.
\par
Throughout the subsection, let us denote by $X$ a complete geodesic metric space and by $\calP$ a collection of subsets of $X$. We want to capture the fundamental properties of projections on a piece in a tree-graded space.

\begin{defn}
A family of maps $\Pi=\{\pi_P:X\to P\}_{P\in \calP}$ will be called \emph{projection system for} $\calP$ if, for each $P\in\calP$,
\par
$(P1)$ for each $r\in P$, $z\in X$, $d(r,z)=d(r,\pi_P(z))+d(\pi_P(z),z)$,
\par
$(P2)$ $\pi_P$ is locally constant outside $P$,
\par
$(P3)$ for each $Q\in\calP$ with $P\neq Q$, we have that $\pi_P(Q)$ is a point.
\end{defn}

\begin{rem}
Note that $\pi_P(x)$ is a point which minimizes the distance from $x$ to $P$. In particular, $P$ is closed. Also, if $x\in P$, then $\pi_P(x)=x$.
\end{rem}

\begin{lemma}\label{propproj:lem}
Suppose that $\{\pi_P\}_{P\in \calP}$ is a projection system.
\begin{enumerate}
\item
Consider $x\in X$ and $P\in\calP$. Each arc (in particular, each geodesic) from $x$ to some $p\in P$ passes through $\pi_P(x)$.
\item
For each $P\in\calP$, each arc (in particular, each geodesic) connecting 2 points in $P$ is entirely contained in $P$. As a consequence, the intersection between an arc $\gamma$ and $P\in\calP$ is either empty, a point or a subarc of $\gamma$.
\item
each simple loop which intersects some $P\in\calP$ in more than one point is contained in $P$.
\end{enumerate}
\end{lemma}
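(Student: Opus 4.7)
The plan is to prove the three items in order, deriving each from the previous ones. Throughout, the key observation is that $P$ is closed (by the remark following the definition) and that a locally constant map on a connected space is constant.

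For (1), I would fix an arc $\gamma\colon [0,1]\to X$ with $\gamma(0)=x$ and $\gamma(1)=p\in P$, and set $t^*=\inf\{t\in[0,1]:\gamma(t)\in P\}$. Since $\gamma^{-1}(P)$ is closed and nonempty, $t^*$ is attained and $\gamma(t^*)\in P$. On the connected set $\gamma([0,t^*))$, which lies outside $P$, the map $\pi_P$ is locally constant by $(P2)$, hence constant; so $\pi_P(\gamma(t))=\pi_P(x)$ for every $t<t^*$. Applying $(P1)$ with $r=\gamma(t^*)\in P$ and $z=\gamma(t)$ gives
\[
d(\gamma(t^*),\gamma(t)) = d(\gamma(t^*),\pi_P(x))+d(\pi_P(x),\gamma(t))
\]
for every $t<t^*$. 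Letting $t\nearrow t^*$, the left-hand side tends to $0$ by continuity of $\gamma$, so $d(\gamma(t^*),\pi_P(x))=0$; thus $\pi_P(x)=\gamma(t^*)$ lies on $\gamma$.

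For (2), let $p,q\in P$ and let $\gamma\colon[0,1]\to X$ be an arc from $p$ to $q$; pick any $z=\gamma(s)$. Apply (1) to the subarc $\gamma\vert_{[s,1]}$ from $z$ to $q\in P$: it must pass through $\pi_P(z)$, so $\pi_P(z)\in\gamma([s,1])$. Similarly, applying (1) to $\gamma\vert_{[0,s]}$ reversed (an arc from $z$ to $p\in P$) gives $\pi_P(z)\in\gamma([0,s])$. Since $\gamma$ is injective, $\gamma([0,s])\cap\gamma([s,1])=\{z\}$, so $\pi_P(z)=z$, and in particular $z\in P$. For the "as a consequence" part, the preimage $\gamma^{-1}(P)$ is closed in $[0,1]$, and the first part shows it is also convex (if $s_1<s_2$ both lie in $\gamma^{-1}(P)$, then the restriction of $\gamma$ to $[s_1,s_2]$ is entirely in $P$); so $\gamma^{-1}(P)$ is either empty, a single point, or a closed interval, matching the three cases claimed.

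For (3), a simple loop $L$ intersecting $P$ in two distinct points $x,y$ decomposes, via those two points, into two arcs from $x$ to $y$; by (2) each of these arcs lies in $P$, hence $L\subseteq P$.

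The only step that requires any care is (1), where one must justify the use of "locally constant implies constant" on the correct connected set and then pass to the limit $t\nearrow t^*$ using $(P1)$; once (1) is secured, (2) and (3) are short formal consequences. I do not anticipate any serious obstacle beyond bookkeeping about where $\pi_P$ is being evaluated.
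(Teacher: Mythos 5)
Your proof is correct, and parts (1) and (3) follow the paper's argument essentially verbatim: in (1) you define the first entry time $t^*$ into $P$, use $(P2)$ to see $\pi_P$ is constant on $\gamma([0,t^*))$, invoke $(P1)$ at $r=\gamma(t^*)$, and take $t\nearrow t^*$; in (3) you cut the simple loop at the two intersection points and apply (2) to each arc. Where you diverge is (2): the paper argues by contradiction, choosing a point $x\in\gamma\setminus P$, isolating a subarc whose only meetings with $P$ are its two endpoints $x_1\neq x_2$, and concluding $\pi_P(x)=x_1$ and $\pi_P(x)=x_2$ from two applications of (1). You instead show directly that for \emph{every} $z=\gamma(s)$ on the arc, both $\gamma|_{[0,s]}$ (reversed) and $\gamma|_{[s,1]}$ must pass through $\pi_P(z)$, and injectivity of $\gamma$ forces $\pi_P(z)=z$, hence $z\in P$. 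This is a genuinely different decomposition and is if anything a bit cleaner, since it avoids the existence of the intermediate subarc and delivers $\gamma^{-1}(P)$ convex at the same stroke, which also makes your closing ``as a consequence'' observation immediate. One cosmetic remark: in (1) you should note that if $t^*=0$ (i.e.\ $x\in P$), then $\pi_P(x)=x=\gamma(0)$ trivially, so that the ``constant on $\gamma([0,t^*))$'' step has content only when $x\notin P$; the paper leaves this implicit too.
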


\begin{proof}
$(1)$ Consider an arc $\gamma:[0,t]\to\F$ from $x$ to $p$. Let $q=\gamma(u)$ be the first point of $\gamma\cap P$ ($P$ is closed in $X$ by assumption). By $(P2)$, $\pi_P\circ\gamma|_{[0,u)}$ is constant, so $\pi_P(x)=\pi_P(\gamma(u'))$ for each $u'\in[0,u)$. Using this fact and $(P1)$ with $r=q$ and $z=\gamma(u')$, for $u'\in[0,u)$, we get
$$d(q,\gamma(u'))=d(q,\pi_P(x))+d(\pi_P(x),\gamma(u')).$$
As $u'$ tends to $u$, the left-hand side tends to $d(q,q)=0$, while the right-hand side tends to $2d(q,\pi_P(x))$. Therefore $d(q,\pi_P(x))=0$ and $q=\pi_P(x)$.
\par
$(2)$ Consider an arc $\gamma$ between two points in some $P\in\calP$ and suppose by contradiction that there exists $x\in(\gamma\backslash P)$. We can consider a subarc $\gamma'$ of $\gamma$ containing $x$ and with endpoints $x_1\neq x_2$ with the property that $\gamma'\cap P=\{x_1,x_2\}$. We have that $[x,x_1]$ intersects $P$ only in its endpoint. By what we proved so far, we must have $\pi_P(x)=x_1$. But, for the very same reason, we should also have $\pi_P(x)=x_2$, a contradiction.
\par
$(3)$ The loop as in the statement can be considered as the union of 2 arcs connecting points on $P$. The conclusion follows from point $(2)$.\end{proof}

The characterization of projection systems given below will be helpful for future arguments.

\begin{lemma}\label{alternproj:lem}
Properties $(P1)$ and $(P2)$ can be substituted by:
\par
$(P'1)$ for each $P\in\calP$ and $x\in P$, $\pi_P(x)=x$,
\par
$(P'2)$ for each $P\in\calP$ and for each $z_1,z_2\in X$ such that $\pi_P(z_1)\neq\pi_P(z_2)$,
$$d(z_1,z_2)=d(z_1,\pi_P(z_1))+d(\pi_P(z_1),\pi_P(z_2))+d(\pi_P(z_2),z_2).$$
\end{lemma}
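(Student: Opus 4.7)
The plan is to prove the two directions separately, showing that, with $(P3)$ unchanged, the pair $(P1)+(P2)$ is equivalent to $(P'1)+(P'2)$.

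For the direction $(P1)+(P2) \Rightarrow (P'1)+(P'2)$, property $(P'1)$ is immediate: applying $(P1)$ with $r = z = x \in P$ gives $0 = d(x,x) = 2\,d(x,\pi_P(x))$, so $\pi_P(x) = x$. For $(P'2)$, let $z_1, z_2 \in X$ with $\pi_P(z_1) \neq \pi_P(z_2)$ and pick any geodesic $\gamma$ from $z_1$ to $z_2$. If $\gamma \cap P = \emptyset$, then $(P2)$ makes $\pi_P \circ \gamma$ locally constant on the connected domain of $\gamma$, hence constant, contradicting $\pi_P(z_1) \neq \pi_P(z_2)$. Therefore $\gamma \cap P$ contains some point $p$. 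The subgeodesic from $z_1$ to $p$ and the subgeodesic from $z_2$ to $p$ are then geodesics ending in $P$, so by Lemma~\ref{propproj:lem}(1) (whose proof uses only $(P1)$ and $(P2)$) both $\pi_P(z_1)$ and $\pi_P(z_2)$ lie on $\gamma$. Decomposing the geodesic through these two points gives the additive identity in $(P'2)$.

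For the direction $(P'1)+(P'2) \Rightarrow (P1)+(P2)$, property $(P1)$ is essentially $(P'2)$ specialized: given $r \in P$ and $z \in X$, apply $(P'1)$ to get $\pi_P(r) = r$; if $\pi_P(z) = r$ the desired equality is trivial, while if $\pi_P(z) \neq r = \pi_P(r)$ then $(P'2)$ with $z_1 = r$, $z_2 = z$ yields $d(r,z) = d(r, \pi_P(z)) + d(\pi_P(z), z)$. For $(P2)$, take $x \notin P$; then $\pi_P(x) \in P$ forces $\pi_P(x) \neq x$, so $\delta := d(x, \pi_P(x)) > 0$. For any $y$ with $d(x,y) < \delta$, if we had $\pi_P(y) \neq \pi_P(x)$, then $(P'2)$ would give $d(x,y) \geq d(x, \pi_P(x)) = \delta$, a contradiction. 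Hence $\pi_P \equiv \pi_P(x)$ on the open ball of radius $\delta$ around $x$.

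The only non-routine step is the forward direction's treatment of $(P'2)$, where one must show that a geodesic between $z_1$ and $z_2$ actually visits both projections; this is where one invokes the already-established consequence that geodesics landing in $P$ pass through the relevant projection. Everything else reduces to short applications of the triangle inequality as packaged in $(P1)$ or $(P'2)$.
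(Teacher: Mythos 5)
Your proof is correct and follows essentially the same route as the paper: the backward direction ($(P'1)+(P'2)\Rightarrow(P1)+(P2)$) derives $(P1)$ from $(P'2)$ with $z_1=r$, $z_2=z$ and then gets local constancy from the lower bound $d(z_1,z_2)\ge d(z_1,\pi_P(z_1))$, while the forward direction uses the local-constancy-along-a-geodesic argument together with Lemma~\ref{propproj:lem}(1) to place both projections on the geodesic and decompose. You spell out a few steps the paper leaves implicit (that Lemma~\ref{propproj:lem}(1) uses only $(P1)$ and $(P2)$, and the $r=z=x$ specialization for $(P'1)$), but the underlying argument is identical.
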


\begin{proof}
Assume that $\{\pi_P\}$ satisfies $(P'1)$ and $(P'2)$. Property $(P1)$ is not trivial only if $r\neq \pi_P(z)$, and in this case follows from $(P'2)$ setting $z_1=z$, $z_2=r$ and taking into account that, by $(P'1)$, $\pi_P(r)=r\neq\pi_P(z)$. As we have property $(P1)$, we also have that $d(z,P)=d(z,\pi_P(z))$ for each $z\in X$. Hence, property $(P2)$ follows from the fact that if $\pi_P(z_1)\neq\pi_P(z_2)$ then $d(z_1,z_2)>d(z_1,P)$.
\par
Assume that $\{\pi_P\}$ satisfies $(P1)$ and $(P2)$. We already remarked that $(P'1)$ holds. Consider $z_1$, $z_2$, $P$ as in property $(P'2)$, and a geodesic $\delta$ between $z_1$ and $z_2$. If we had $\delta\cap P=\emptyset$, then $\pi_P$ would be constant along $\delta$ and so $\pi_P(z_1)=\pi_P(z_2)$. Therefore $\delta$ intersects $P$. So, by point $(1)$ of the previous lemma, $\delta$ contains $\pi_P(z_1)$ and $\pi_P(z_2)$, hence the thesis.
\end{proof}

\begin{defn}
A geodesic is $\calP-$\emph{transverse} if it intersects each $P\in\calP$ in at most one point. A geodesic triangle in $X$ is $\calP-$\emph{transverse} if each side is $\calP-$transverse.
\par
$\calP$ is \emph{transverse-free} if each $\calP-$transverse geodesic triangle is a tripod.
\end{defn}

\begin{thm}\label{projgrad:thm}
Let $X$ be a complete geodesic metric space and let $\calP$ a collection of subsets of $X$ that covers $X$. Then $X$ is tree-graded with respect to $\calP$ if and only if $\calP$ is transverse-free and there exists a projection system for $\calP$.
\end{thm}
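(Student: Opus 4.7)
My plan is to prove both directions by directly comparing axioms, letting Lemma~\ref{propproj:lem} carry most of the work in the backward direction and the already-established projection lemma for tree-graded spaces carry most of the forward direction.

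For the forward direction ($\Rightarrow$), suppose $X$ is tree-graded with respect to $\calP$, and take $\pi_P$ to be the projections from the lemma preceding Corollary~\ref{emptypointsubgeod:cor}. Property (P2) is literally the third bullet of that lemma. Property (P1) follows from the first two bullets: any geodesic from $z$ to $r\in P$ contains $\pi_P(z)$, so its length decomposes as $d(r,z)=d(r,\pi_P(z))+d(\pi_P(z),z)$. For (P3), each piece $Q\neq P$ is connected (it is a geodesic subset) and meets $P$ in at most one point by $(T_1)$, so the last clause of the third bullet yields that $\pi_P(Q)$ is a single point. It remains to show $\calP$ is transverse-free. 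Given a $\calP$-transverse geodesic triangle $\Delta$, if $\Delta$ is simple then $(T_2)$ places it in one piece $P$; but then any non-degenerate side would meet $P$ in an entire subarc, violating transversality, so $\Delta$ must collapse to a point (a trivial tripod). If $\Delta$ is not simple, the ``last-coincidence'' points of pairs of sides at each vertex either identify $\Delta$ with a tripod or exhibit a simple sub-triangle whose sides are sub-arcs of the original ones, to which the previous contradiction applies.

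For the backward direction ($\Leftarrow$), assume $\calP$ is transverse-free and $\{\pi_P\}$ is a projection system. For $(T_1)$, suppose $p_1\neq p_2$ lie in $P\cap Q$ with $P\neq Q$. The remark following the definition of projection system gives $\pi_P(p_i)=p_i$, so $\pi_P(Q)\supseteq\{p_1,p_2\}$, contradicting (P3). For $(T_2)$, let $\Delta$ be a simple geodesic triangle. If some side of $\Delta$ meets some $P\in\calP$ in at least two points, then $\Delta$ is a simple loop with this property and Lemma~\ref{propproj:lem}(3) puts $\Delta\subseteq P$. Otherwise every side of $\Delta$ is $\calP$-transverse, so the transverse-free hypothesis forces $\Delta$ to be a tripod; but a simple tripod must degenerate to a single point, which lies in some piece because $\calP$ covers $X$.

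The main obstacle is the transverse-free half of the forward direction: one must argue that in a tree-graded space, a non-tripod geodesic triangle always contains a simple sub-triangle with sides that are sub-segments of the original sides, so that $(T_2)$ can be applied and yield the desired contradiction with transversality. This is a standard but finicky case analysis on where pairs of sides stop coinciding near each vertex; everything else in the proof is a direct dictionary between the projection-system axioms (P1)--(P3) and the tree-graded axioms $(T_1)$--$(T_2)$ by way of Lemma~\ref{propproj:lem}.
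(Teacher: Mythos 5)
Your backward direction is the paper's argument: $(T_1)$ from $(P3)$ exactly as written, and $(T_2)$ by splitting into the $\calP$-transverse case (tripod, hence a point, covered by some piece) and the case where a side meets a piece in two points (then the simple loop $\Delta$ lies in that piece by Lemma~\ref{propproj:lem}(3)). Likewise, your derivations of $(P1)$, $(P2)$, $(P3)$ from the tree-graded projection lemma and $(T_1)$ are correct and are exactly what the paper's one-line ``follows from properties stated before'' is alluding to.

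The gap is in the transverse-free half of the forward direction, which you flag yourself as the hard step. The reduction ``non-simple, non-tripod $\Rightarrow$ simple sub-triangle with sides sub-arcs of the original'' is not sound as stated. In a general geodesic space two geodesics emanating from a common vertex need not share a maximal initial segment and then stay apart; they can separate and re-intersect, so the ``last-coincidence point'' you appeal to is not well-defined, and the combinatorics of how the three sides overlap can be arbitrarily bad. One can of course argue that a tree-graded space has no such pathologies, but that fact is essentially equivalent to what you are trying to prove, so the argument becomes circular unless you invoke more structure. The clean fill uses transversal trees, which the paper has already developed (Definition~\ref{transv:defn} and the lemma after it). If $\gamma$ is a $\calP$-transverse geodesic from $a$, then its other endpoint lies in $T_a$ by definition, and hence $\gamma\subseteq T_a$ since any arc joining two points of $T_a$ stays in $T_a$. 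So if the triangle with apex $a$ is $\calP$-transverse, all three vertices and hence (again by that lemma) all three sides lie in $T_a$, which is an $\R$-tree; every geodesic triangle in an $\R$-tree is a tripod, and you are done with no case analysis at all. I would replace your simple/non-simple dichotomy with this argument.
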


\begin{proof}
$\Rightarrow:$ This implication follows from the properties of tree-graded spaces we stated before.
\par
$\Leftarrow:$ Let $\Pi=\{\pi_P\}$ be a projection system for $P$.
Let us prove property $(T_1)$. Consider $P, Q\in\calP$ with $P\neq Q$. If $x,y\in P\cap Q$, we have $\pi_P(Q)\supseteq \{x,y\}$. By $(P3)$, this implies $x=y$.
\par
Let us show how to obtain property $(T_2)$. Consider a simple geodesic triangle $\Delta$ with vertices $a,b,c$. If it consists of one point (recall that we consider these triangles to be simple), then it is contained in some $P\in\calP$, as we assume that elements of $\calP$ cover $X$. So, we can suppose that $\Delta$ is not trivial. Then, it cannot be $\calP-$transverse, for otherwise it would be a non-trivial tripod, and therefore not a simple triangle.
\par
So, we can assume that $P\cap [a,b]$ contains a non-trivial subgeodesic $[a',b']\subseteq [a,b]$, for some $P\in\calP$. The conclusion follows from Lemma~\ref{propproj:lem}$-(3)$, as $\Delta$ is in particular a simple loop.
\end{proof}

\begin{rem}\label{mixed:rem}
Property $(P3)$ was used only to prove $(T_1)$. Therefore, another way to prove that $X$ is tree-graded is to prove property $(T_1)$, properties $(P1)$ and $(P2)$ (or $(P'1)$ and $(P'2)$) for some family of maps $\{\pi_P\}$, and that $\calP$ is transverse-free.

\end{rem}

\begin{lemma}\label{onlygeod:lem}
Suppose that there exists a projection system for $\calP$. Consider points $p,q\in X$ such that there exists one $\calP-$transverse geodesic $\gamma$ from $p$ to $q$. Then $\gamma$ is the only geodesic from $p$ to $q$.
\end{lemma}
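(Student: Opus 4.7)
The plan is to argue by contradiction: suppose another geodesic $\gamma'\ne\gamma$ joins $p$ to $q$. Parametrizing both by arc length on $[0,d]$ with $d=d(p,q)$, I choose $t_0\in(0,d)$ with $\gamma'(t_0)\notin\gamma$ and set
$$t_1=\sup\{t\le t_0:\gamma'(t)\in\gamma\},\qquad t_2=\inf\{t\ge t_0:\gamma'(t)\in\gamma\}.$$
Since $\gamma$ is closed and $\gamma'(0),\gamma'(d)\in\gamma$, both are attained and $t_1<t_0<t_2$. Arc-length agreement of the two geodesics from the common starting point $p$ forces the endpoints $x_i:=\gamma'(t_i)=\gamma(t_i)$ to coincide. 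Then $\gamma|_{[t_1,t_2]}$ and $\gamma'|_{[t_1,t_2]}$ have equal length $t_2-t_1>0$ and, because the open middle of $\gamma'|_{[t_1,t_2]}$ avoids $\gamma$, meet only at the distinct endpoints; their union is therefore a simple loop $L$.

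The proof reduces to showing that $L$ is contained in a single piece $P\in\calP$: once established, $\gamma|_{[t_1,t_2]}\subseteq L\subseteq P$ forces $\{x_1,x_2\}\subseteq\gamma\cap P$ with $x_1\ne x_2$, violating the $\calP$-transversality of $\gamma$. By Lemma~\ref{propproj:lem}(3), it suffices to exhibit one piece $P$ with $|L\cap P|\ge2$. To do so I first record two projection observations. First, $\calP$-transversality forces $\pi_P$ to be constant on $\gamma$ for every $P$: when $\gamma\cap P=\{w\}$, Lemma~\ref{propproj:lem}(1) applied to subarcs of $\gamma$ ending at $w$ pins the projection of each point of $\gamma$ into $\gamma\cap P=\{w\}$; when $\gamma\cap P=\emptyset$, (P2) and connectedness give the same. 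Denote this common value by $w_P$. Second, a parallel argument on the two halves of $\gamma'|_{[t_1,t_2]}$ using Lemma~\ref{propproj:lem}(1) shows $\pi_P$ is also constantly $w_P$ along $\gamma'|_{[t_1,t_2]}$: otherwise an interior $y$ with $\pi_P(y)\ne w_P$ would force $w_P$ to sit on both halves of the $\gamma'$-arc, hence at their single common point $y$, a contradiction.

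The main obstacle is converting these projection data into $|L\cap P|\ge2$. The natural route is to pick an interior point $y=\gamma'(s)\in L\setminus\gamma$ and a piece $P$ with $y\in P$ (using that $\calP$ covers $X$); then $\pi_P(y)=y=w_P$, and in the case $\gamma\cap P\ne\emptyset$ the first observation places $w_P$ on $\gamma$, contradicting $y\notin\gamma$. The delicate residual case $\gamma\cap P=\emptyset$ is handled by using (P1) together with the identity $d(p,y)+d(y,q)=d(p,q)$ to show that $y$ is the unique nearest point of $P$ to both $p$ and $q$; a final application of Lemma~\ref{propproj:lem}(1) then places $y$ on every geodesic from $p$ to $q$, in particular on $\gamma$, yielding the desired contradiction.
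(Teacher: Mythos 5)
Your approach parallels the paper's: assemble a simple loop $L$ from non-trivial subgeodesics of $\gamma$ and $\gamma'$, and try to place it inside a single piece so that $\calP$-transversality of $\gamma$ fails via Lemma~\ref{propproj:lem}(3). The construction of the loop and your two projection observations are correct, and the case $\gamma\cap P\neq\emptyset$ is handled cleanly. The gap is the ``delicate residual case'' $\gamma\cap P=\emptyset$. There you invoke Lemma~\ref{propproj:lem}(1) to put $y$ on every geodesic from $p$ to $q$; but that lemma applies only to arcs terminating at a point of $P$, and $\gamma$ has both endpoints outside $P$ (indeed it avoids $P$ entirely in this case). Knowing $\pi_P(p)=\pi_P(q)=y$ together with $d(p,y)+d(y,q)=d(p,q)$ does not force $y\in\gamma$: the projection system permits all of $\gamma$ to project constantly to $y$ without ever entering $P$, and no axiom $(P1)$--$(P3)$ rules this out.

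This case cannot simply be argued away, because without some further hypothesis the statement itself fails. Take $X$ to be a circle with the path metric, let $\calP$ be the collection of all singletons, and let $\pi_{\{x\}}$ be the constant map to $x$; this is a projection system, every geodesic is vacuously $\calP$-transverse, yet antipodal points are joined by two distinct geodesics. To be fair to you, the paper's own proof is equally terse at exactly this spot: it asserts that the loop ``is contained in some $P\in\calP$'' without producing a second point of $L$ in a piece, which is precisely what Lemma~\ref{propproj:lem}(3) requires. In the application inside Theorem~\ref{existence:thm} this is harmless because there the pieces are non-degenerate, cover $X$, and there is always a geodesic between distinct points meeting a piece in a non-trivial arc; that extra structure is what closes the gap, and any complete proof of this lemma (yours or the paper's) needs to either assume it or use it.
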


\begin{proof}
Consider a geodesic $\gamma'$ from $p$ to $q$. If $\gamma'$ is different from $\gamma$, then a simple loop obtained as the union of non-trivial subgeodesics of $\gamma, \gamma'$ is easily found. This loop is contained in some $P\in\calP$, so $\gamma$ cannot be $\calP-$transverse.
\end{proof}

\subsection{Relative hyperbolicity}

\begin{defn}
$X$ is \emph{asymptotically tree-graded with respect to }$\calP$ if each asymptotic cone $Y$ of $X$ is tree-graded with respect to the collection of the non-empty subsets of $Y$ induced by elements of $\st\calP$. Also, we require that, if two distinct elements of $\st\calP$ induce pieces of $Y$, these pieces intersect in at most one point.
\end{defn}

Asymptotically tree-graded spaces were first defined in~\cite{DS1}, see Definition 4.19. Note that the above definition is more easily stated, thanks to the nonstandard formalism. The following result is contained in \cite{DS1}

\begin{lemma}\label{M:lem}
Let $X$ be asymptotically tree-graded with respect to $\calP$. Then there exists $M$ with the following property. If $\gamma$ is a geodesic connecting $x$ to $y$, and $d(x,P),d(y,P)\leq d(x,y)/3$ for some $P\in\calP$, then $\gamma\cap N_M(P)\neq\emptyset$.

\end{lemma}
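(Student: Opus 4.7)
The approach is proof by contradiction, working in the asymptotic cone and exploiting its tree-graded structure through the tools of Section~2.

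Suppose no such $M$ exists; then for every $n\in\N^+$ there is a counterexample $(x_n,y_n,P_n,\gamma_n)$. By the transfer principle I obtain internal $x,y\in\st X$, $P\in\st\calP$, an internal $\st$geodesic $\gamma$ from $x$ to $y$, and an infinite $N\in\st\R^+$ satisfying $d(x,P),d(y,P)\leq L/3$ (where $L=d(x,y)$) and $d(\gamma,P)\geq N$. Both $L$ and $\mu:=d(\gamma,P)\geq N$ are infinite, with $\mu\leq L/3$.

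The first cone argument is at scale $L$. In $C(X,x,L)$, the subset $\bar P$ induced by $P$ is a piece by hypothesis. One has $d([x],[y])=1$ and $d([x],[\pi_P(x)]),\,d([y],[\pi_P(y)])\leq 1/3$, so the triangle inequality gives $d([\pi_P(x)],[\pi_P(y)])\geq 1/3>0$; since a closest point of a piece realizes the tree-graded projection, $\pi_{\bar P}([x])=[\pi_P(x)]\neq[\pi_P(y)]=\pi_{\bar P}([y])$. By Corollary~\ref{diffproj:cor} the cone image of $\gamma$ meets $\bar P$, and by Corollary~\ref{emptypointsubgeod:cor} its sub-arc between the two projections lies in $\bar P$. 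Pulling this back yields an interval $[t_1,t_2]\subseteq[0,L]$ with $t_2-t_1\geq L/3+o(L)$ on which $d(\gamma(t),P)\in o(L)$.

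This gives a refined sub-configuration $(\gamma(t_1),\gamma(t_2),P,\gamma|_{[t_1,t_2]})$ with endpoints $o(L)$ from $P$, length $\geq L/3$, avoidance still $\geq N$. A second cone argument now at the smaller scale $L^{(1)}=t_2-t_1$: both endpoints project inside $\bar P$ (the distances are infinitesimal in this cone), so by Corollary~\ref{emptypointsubgeod:cor} the entire cone image of the sub-geodesic lies in $\bar P$, giving $\sup_t d(\gamma(t),P)\in o(L^{(1)})$. An intermediate-value argument on the 1-Lipschitz function $f(t)=d(\gamma(t),P)$ (varying the threshold $r$ and solving $b_r-a_r=3r$, where $[a_r,b_r]$ is the connected component of $\{f\leq r\}$ containing the argmin) extracts a balanced sub-sub-configuration of length $3r^*$ with $r^*\in[N,o(L^{(1)})]$ and both endpoints at distance exactly $r^*$ from $P$. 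Iterating this procedure shrinks the length by an infinitesimal factor at each step while keeping the avoidance $\geq N$; an overspill/saturation argument applied to the internal sequence $(r^{(k)})$ produces a step $k$ at which $L^{(k)}=O(N)$. At this balanced scale the cone argument at scale $L^{(k)}$ (or equivalently $N$) forces an immediate contradiction: the cone image of $\gamma^{(k)}$ is at distance bounded away from $0$ from $\bar P$, yet the distinctness of projections forces it to cross $\bar P$.

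The main obstacle is exactly this reduction to a balanced configuration: the one-shot cone argument at scale $L$ only produces the weaker conclusion $\mu\in o(L)$, not a contradiction. It is the iterated refinement—using Corollary~\ref{emptypointsubgeod:cor} twice per iteration (first to promote endpoints into $\bar P$, then to force the whole sub-geodesic into $\bar P$) together with the IVT-style sub-segmenting and the overspill step—that converts asymptotic smallness into an actual contradiction.
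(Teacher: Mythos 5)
The paper does not give its own proof of this lemma: the line preceding it reads \emph{``The following result is contained in \cite{DS1}''}, so there is no paper proof to compare against, only the validity of your argument to assess.

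Your initial cone argument is correct and is the right tool: from the hypothesis that no $M$ works one does get (by saturation) an internal $\gamma$, $P\in\st\calP$, and an infinite $N$ with $d(\gamma,P)\geq N$, $d(x,P),d(y,P)\leq L/3$, and passing to $C(X,x,L)$ with Corollary~\ref{diffproj:cor} and Corollary~\ref{emptypointsubgeod:cor} does give a subinterval $[t_1,t_2]$ of length $\geq L/3+o(L)$ on which $d(\gamma(\cdot),P)\in o(L)$. (Note, incidentally, that $L^{(1)}=t_2-t_1\equiv L$, so the ``second cone argument at the smaller scale $L^{(1)}$'' is not actually at a smaller order of magnitude; the passage from pointwise $o(L)$ to $\sup\in o(L)$ is a direct underspill on the internal function $t\mapsto d(\gamma(t),P)$ and does not need a second cone.)

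The gap is the termination of the iteration. Each step of your refinement (``first cone, then IVT, then cone again at the new scale\ldots'') is defined through an asymptotic-cone computation, which is an \emph{external} construction; so the sequence $(r^{(k)})$, $(L^{(k)})$ you want to apply overspill to is not an internal sequence, and the overspill step is not available. Even setting that aside, an internal decreasing sequence with $L^{(k+1)}\in o(L^{(k)})$ and $L^{(k)}\geq 3N$ for every standard $k$ need not contain a term with $L^{(k)}=O(N)$; nothing forces the ratios $L^{(k)}/N$ to become finite. There is also a smaller issue in the IVT step: the function $r\mapsto b_r-a_r$ is only nondecreasing and need not be continuous, so ``solving $b_r-a_r=3r$'' requires more care, and on a balanced interval the equation is already solved by the current $r^{(k)}$, so the extraction may make no progress.

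The repair is small and stays within your framework: instead of iterating, minimize once. The set
$G=\{(t,s): 0\leq t<s\leq L,\ d(\gamma(t),P)\leq (s-t)/3,\ d(\gamma(s),P)\leq (s-t)/3\}$
is internal, nonempty (it contains $(0,L)$), and closed, hence by transfer the length $s-t$ attains a minimum $\ell$ at some $(a,b)\in G$; since $d(\gamma(a),P)\geq N$ one has $\ell\geq 3N$, so $\ell$ is infinite. Applying your cone argument once in $C(X,\gamma(a),\ell)$ yields $0<\tau_1<\tau_2<1$ with $\tau_2-\tau_1\geq 1/3$ such that $d(\gamma(a+\tau\ell),P)\in o(\ell)$ for every standard $\tau\in(\tau_1,\tau_2)$. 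Picking standard $\sigma_1<\sigma_2$ in $(\tau_1,\tau_2)$ with $\sigma_2-\sigma_1>1/4$ and setting $a''=a+\sigma_1\ell$, $b''=a+\sigma_2\ell$ gives $(a'',b'')\in G$ with $b''-a''=(\sigma_2-\sigma_1)\ell<\ell$, contradicting minimality. This replaces the transfinite refinement and the shaky overspill step with a single internal extremal choice.
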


We are finally ready to define relatively hyperbolic groups. Let $G$ be a finitely generated group and let $H_1,\dots,H_n$ be finitely generated subgroups of $G$.

\begin{defn}[\cite{DS1}]
$G$ is \emph{hyperbolic relative to} $H_1,\dots,H_n$ if for some (hence every) finite generating system $S$ for $G$, $C_S(G)$ is asymptotically tree-graded with respect to $\{gH_i|g\in G,i=1,\dots,n\}$.
\end{defn}

\begin{conv}
To avoid trivial cases, we will always assume that each $H_i$ as above is infinite and has infinite index in $G$.
\end{conv}


\section{Homeomorphism type of tree-graded spaces}

To simplify the notation, when $G$ is a group we will denote by $C(G,\nu)$ the asymptotic cone $C(G,e,\nu)$, where $\nu$ is a scaling factor. The main results in this section are Theorem~\ref{cutpinthecone:thm} and the theorem we are about to state. First, a definition.

\begin{defn}
Consider groups $G_i$, for $i=0,1$, which are hyperbolic relative to the collections of proper subgroups $\calH_i=\{H^1_i,\dots,H^{n(i)}_i\}$, and $\nu_0,\nu_1\gg 1$. We will say that $G_0$ at scale $\nu_0$ is comparable with $G_1$ at scale $\nu_1$ if for each $H\in \calH_i$ such that $C(H,\nu_i)$ is not a real tree there exists $H'\in\calH_{i+1}$ such that $C(H',\nu_{i+1})$ is bilipschitz equivalent to $C(H,\nu_{i})$.
\end{defn}

\begin{rem}\label{uncostricted:rem}
If each $C(H^j_i,\nu_i)$ does not have cut-points and $C(G_0,\nu_0)$ is bilipschitz equivalent to $C(G_1,\nu_1)$, then $G_0$ at scale $\nu_0$ is comparable to $G_1$ at scale $\nu_1$, because a homeomorphism between tree-graded spaces whose pieces do not have cut-points preserves the pieces (see~\cite{DS1}).

\end{rem}

For example, suppose that $G$ is hyperbolic relative to subgroups whose (bilipschitz type of the) asymptotic cones do not depend on the scaling factor (this is the case if the subgroups are virtually nilpotent, for example). Then for each $\nu_0,\nu_1\gg 1$ we have that $G$ at scale $\nu_0$ is comparable with itself at scale $\nu_1$.
\par
The theorem is the following.

\begin{thm}\label{comparable:thm}
Suppose that $G_0$ and $G_1$ are relatively hyperbolic groups and that $G_0$ at scale $\nu_0$ is comparable with $G_1$ at scale $\nu_1$.
Then $C(G_0, \nu_0)$ is bilipschitz homeomorphic to $C(G_1,\nu_1)$.
\end{thm}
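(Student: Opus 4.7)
The plan is to build the bilipschitz homeomorphism $f\colon C(G_0,\nu_0)\to C(G_1,\nu_1)$ by a transfinite back-and-forth argument that matches pieces of $C(G_0,\nu_0)$ to pieces of $C(G_1,\nu_1)$ of the same bilipschitz class. Both cones are tree-graded, with non-trivial pieces induced by the internal cosets of the peripheral subgroups, so these pieces fall into finitely many bilipschitz classes (one per peripheral). The comparability hypothesis supplies, for each non-tree class present on one side, a bilipschitz-equivalent class on the other; let $K$ be a uniform upper bound on the finitely many resulting bilipschitz constants. Moreover, by Proposition~\ref{valency:prop} every transversal tree in either cone has valency $2^{\aleph_0}$, so by the classical uniqueness of complete $\R$-trees of a given uniform valency, every transversal tree of $C(G_0,\nu_0)$ is isometric to every transversal tree of $C(G_1,\nu_1)$.

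The key local ingredient I would establish first is an \emph{abundance statement}: for each point $p\in C(G_i,\nu_i)$ and each peripheral bilipschitz class $c$ present in the cone, there are $2^{\aleph_0}$ non-trivial pieces of class $c$ attached at $p$ along pairwise distinct directions of $T_p$. I would extract this from the homogeneity of the cone under the action of the internal group, the $2^{\aleph_0}$ valency of $T_p$, and the transitivity of the internal group on internal cosets of each peripheral.

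With these ingredients I construct $f$ by transfinite recursion. Fix basepoints $o_i\in C(G_i,\nu_i)$ (images of the identity), set $f(o_0)=o_1$, and well-order the pieces of both cones (each family has cardinality $\le 2^{\aleph_0}$). At each stage, alternating between the two cones, take the least not-yet-matched non-trivial piece $P$ whose attaching point $\pi_P(o_i)$ already lies in the current domain of $f$; use the abundance statement to choose an unused piece $P'$ in the other cone of matching bilipschitz class attached at $f(\pi_P(o_i))$; and extend $f$ to a $K$-bilipschitz homeomorphism $P\to P'$ sending attaching point to attaching point. Points lying in no non-trivial piece assemble into a real subtree of valency $2^{\aleph_0}$ at every branching point, so $f$ is prescribed there by an isometry of $\R$-trees coherent with the piece matching. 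By repeated applications of Lemma~\ref{concat:lem}, distances in a tree-graded space telescope along geodesic concatenations lying alternately in pieces and in transversal trees, since pieces meet in at most one point; hence the piecewise $K$-bilipschitz extensions and the $\R$-tree isometries combine into a single globally $K$-bilipschitz homeomorphism.

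The main obstacle is the transfinite bookkeeping required to preserve the abundance statement throughout the recursion: at every stage and at every attaching point in either cone, $2^{\aleph_0}$ unused pieces of each required class must still be available for matching. One must also verify coherence at the interfaces between pieces and the underlying transversal-tree structure, so that at points lying on the boundary of a piece the piecewise and tree-isometry prescriptions of $f$ agree; this holds by construction because attaching points are preserved at each extension. Once these verifications are in place, surjectivity of $f$ follows from the back-and-forth alternation and the global $K$-bilipschitz estimate from the telescoping argument, completing the proof.
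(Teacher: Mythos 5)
Your overall vision — a transfinite back-and-forth matching of pieces, powered by the valency statement (Proposition~\ref{valency:prop}) and an ``abundance'' count of attached pieces at each point — is in the same spirit as the paper, which also builds the homeomorphism by a Zorn/maximality extension argument (Theorem~\ref{univmakesense:thm}), and the abundance statement you isolate is essentially the content of Proposition~\ref{F-1relhyp:prop}. However, there is a genuine gap in the step where you assemble the piecewise-defined map into a global bilipschitz homeomorphism.

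You claim that ``distances in a tree-graded space telescope along geodesic concatenations lying alternately in pieces and in transversal trees'' and invoke Lemma~\ref{concat:lem}. That lemma handles a \emph{finite} concatenation of two geodesics meeting along no common piece. But a geodesic in a tree-graded space is in general \emph{not} a finite alternating concatenation of piece-segments and transversal-tree segments. The union of piece intervals along a geodesic is only dense in the parameter interval (Lemma~\ref{piecesubdense:lem}), and there exist ``fractal'' geodesics whose piece intervals form a Cantor-like family accumulating densely, with the complementary set uncountable and of zero measure — the paper explicitly flags this issue (it is why the apparatus of P-geodesics, almost-fillings, and the reparametrizing functions $s_\Gamma$ is introduced). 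If you only match pieces and transversal-tree segments one at a time, you have no control over the bilipschitz distortion across such accumulations, and no finite telescoping via Lemma~\ref{concat:lem} applies. This is exactly where your proposal stalls: the recursion must track not the pieces hit by a geodesic but the full infinite pattern of piece intervals, and extend the map compatibly with these limit configurations. The paper handles this by (i) encoding each geodesic by a P-geodesic together with its almost filling, (ii) proving that two geodesics with distinct initial P-patterns concatenate into a geodesic (Lemma~\ref{diffpattconc:lem}), (iii) counting precisely the cardinality of geodesics with a given initial P-pattern at each point (Proposition~\ref{F-1relhyp:prop}), and (iv) constructing an explicit universal tree-graded model $\F_\alpha$ (Theorem~\ref{existence:thm}) to which both cones are bilipschitz equivalent via the Zorn argument of Theorem~\ref{univmakesense:thm}. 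Your proposal also asserts, in passing, that the points not on any non-trivial piece ``assemble into a real subtree''; this is not the right picture — in the normalized tree-graded structure these points belong to the various disjoint transversal trees, which are themselves pieces, and coherence at piece/tree interfaces is again controlled by the P-pattern bookkeeping, not merely by preserving attaching points. To fix the argument you would need to supply, concretely, the combinatorial invariant (the P-pattern) and the counting lemma for it, which is the heart of the paper's proof and is missing here.
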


\begin{cor}
Suppose that $G$ is hyperbolic relative to subgroups whose (bilipschitz type of the) asymptotic cones do not depend on the scaling factor. Then the asymptotic cones of $G$ are all bilipschitz equivalent.

\end{cor}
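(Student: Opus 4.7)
The plan is to apply Theorem~\ref{comparable:thm} with $G_0=G_1=G$. Given two arbitrary scaling factors $\nu_0,\nu_1\gg 1$, I want to show that $G$ at scale $\nu_0$ is comparable with $G$ at scale $\nu_1$, in the sense of the definition preceding Theorem~\ref{comparable:thm}. Once comparability is established, Theorem~\ref{comparable:thm} immediately yields that $C(G,\nu_0)$ and $C(G,\nu_1)$ are bilipschitz equivalent, and since $\nu_0,\nu_1$ were arbitrary this proves that all asymptotic cones of $G$ are bilipschitz equivalent to one another.

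To verify comparability, let $\calH=\{H^1,\dots,H^n\}$ be the collection of peripheral subgroups (the same for both copies of $G$). Fix $H\in\calH$ such that $C(H,\nu_0)$ is not a real tree. I take $H':=H\in\calH$ (the same subgroup), and appeal to the hypothesis that the bilipschitz type of the asymptotic cones of each $H^j$ does not depend on the scaling factor: this gives directly that $C(H',\nu_1)=C(H,\nu_1)$ is bilipschitz equivalent to $C(H,\nu_0)$. The condition with the roles of $\nu_0$ and $\nu_1$ swapped is verified by the symmetric argument. Hence $G$ at scale $\nu_0$ is comparable with $G$ at scale $\nu_1$.

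The proof is therefore essentially formal, and no step presents a serious obstacle; the only thing to notice is that one must not worry about $H$ such that $C(H,\nu_0)$ \emph{is} a real tree, since the definition of comparability imposes no requirement on such peripheral subgroups. Writing the short derivation from Theorem~\ref{comparable:thm} is all that the corollary needs.
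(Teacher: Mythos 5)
Your proof is correct and matches the paper's intended argument: the paragraph immediately preceding Theorem~\ref{comparable:thm} already records that under the stated hypothesis $G$ at scale $\nu_0$ is comparable with itself at scale $\nu_1$, and the corollary is then a direct application of that theorem with $G_0=G_1=G$. Your attention to the fact that comparability imposes no requirement on those $H$ with $C(H,\nu_0)$ a real tree is a correct reading of the definition.
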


\subsection{Hyperbolic elements and transversal trees}

Throughout the subsection $G$ will denote a group which is hyperbolic relative to its subgroups $H_1,\dots,H_n$. Recall that we always assume that each $H_i$ has infinite index in $G$ and is infinite. We also fix a finite system of generators $S$.
\par
In this subsection recall some algebraic properties of relatively hyperbolic groups discovered by Osin, and we apply them to determine the structure of transversal trees in their asymptotic cones.

\begin{defn}
A hyperbolic element of $G$ is an infinite order element which is not conjugated to any element of $H_i$, $i=1,\dots n$.
\end{defn}

\begin{lemma}{\cite[Corollary 4.5]{Os2}}
There exists a hyperbolic element $g\in G$.
\end{lemma}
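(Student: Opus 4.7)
The plan is to construct a hyperbolic element explicitly by a ping-pong style argument, using the coset geometry of $G$ controlled by Lemma~\ref{M:lem}. The only tools I allow myself are (i) the standing assumption that each $H_i$ is infinite and of infinite index, and (ii) the behaviour of geodesics near parabolic cosets given in Lemma~\ref{M:lem}.

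First I arrange two parabolic elements sitting in \emph{distinct} conjugates. If $n\geq 2$, pick non-trivial $h_1\in H_1$ and $h_2\in H_2$. If $n=1$, use the infinite-index assumption to choose $\g\in G\setminus H_1$, fix any non-trivial $h,h'\in H_1$, and set $h_1:=h$, $h_2:=\g h'\g^{-1}$, so that $h_1$ lies in the coset $H_1$ and $h_2$ in the distinct conjugate coset $\g H_1\g^{-1}$. In both cases, let $w:=h_1h_2$. I will argue that $w$ is hyperbolic. The natural word $(h_1h_2)^k$ traces, in the Cayley graph, a path visiting the cosets
$$H_1,\; h_1H_2,\; h_1h_2H_1,\; h_1h_2h_1H_2,\;\ldots$$
(and analogously in the $n=1$ case, with alternating conjugates of $H_1$); these $2k$ cosets are pairwise distinct cosets of parabolic subgroups.

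To see that $w$ has infinite order, I apply Lemma~\ref{M:lem} iteratively: any geodesic from $e$ to $w^k$ must pass within distance $M$ of each of the $2k$ intermediate cosets, because its endpoints lie within the first third and the last third of the distance to each such coset. Since consecutive cosets in the list are distinct cosets of the $H_i$, a standard iteration gives $|w^k|\gtrsim k$, so in particular $w^k\neq e$ for all $k\neq 0$. To show that $w$ is not conjugate into any $H_i$, suppose for contradiction that $w^m\in \g H_i\g^{-1}$ for some $m\geq 1$ and $\g\in G$. Then every $w^{mj}$ lies in the same conjugate, and in particular is within bounded distance of the single coset $\g H_i$ (the left coset containing the identity element of that subgroup after translation). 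But the argument above shows that a geodesic from $e$ to $w^{mj}$ must $M$-visit $\approx 2mj$ distinct parabolic cosets none of which (for $j$ large) is $\g H_i$. Compared with any path staying in a bounded neighbourhood of a single coset, this is a contradiction for $j$ sufficiently large, so no power of $w$ is parabolic.

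The hard part is the non-parabolicity step: rigorously ruling out the possibility that the alternating-coset representative of $w^{mj}$ is grossly inefficient. What is needed is essentially the ``bounded coset penetration'' property hidden in the asymptotically tree-graded structure — i.e.\ that the sequence of cosets $M$-visited by a geodesic is, up to bounded error, intrinsic to its endpoints. In the absence of a direct quotation, I would extract this from Lemma~\ref{M:lem} by passing to an asymptotic cone: the path $(h_1h_2)^k$ projects to a geodesic in a tree-graded cone whose pieces are the induced images of $\st\calP$, and Corollary~\ref{diffproj:cor} together with the piece-separation of the alternating cosets forces a nontrivial transversal component, preventing the limiting element from lying in a single piece. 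Translating this back to $G$ by a standard overspill/underspill argument produces the desired contradiction with $w^m\in\g H_i\g^{-1}$.
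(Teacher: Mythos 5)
The paper does not prove this lemma; it simply cites Osin \cite[Corollary 4.5]{Os2} and takes the existence of hyperbolic elements as a black box. So strictly speaking there is no internal proof to compare your attempt against; you are reproving an imported fact. That said, your sketch has several genuine gaps.

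First, taking $h_1\in H_1$ and $h_2\in H_2$ merely \emph{non-trivial} is not enough to guarantee that the alternating cosets
$H_1,\, h_1H_2,\, h_1h_2H_1,\dots$
are distinct. In a relatively hyperbolic group distinct peripherals (and distinct conjugates of a single peripheral) have \emph{finite}, not necessarily trivial, intersection; if $h_2\in H_1\cap H_2\neq\{e\}$ then $h_1h_2H_1=H_1$ and the sequence collapses. You would need to choose $h_1,h_2$ outside such intersections (possible, since the parabolics are infinite and the intersections finite), but this is not automatic from ``non-trivial''. In the $n=1$ case one must likewise choose $\g$ so that $\g H_1\g^{-1}\neq H_1$ -- true here since $H_1$ is assumed proper, but worth saying -- and this still only gives finite intersection, not trivial.

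Second, the length lower bound $|w^k|\gtrsim k$ cannot be extracted directly from Lemma~\ref{M:lem}. That lemma says a geodesic whose endpoints are both within a third of its length of a coset $P$ meets $N_M(P)$; but to invoke it for each of the $2k$ intermediate cosets you would already need to know those cosets are roughly evenly spaced along the geodesic from $e$ to $w^k$, which is precisely what you are trying to establish. The correct tool is a bounded coset penetration / ``fellow travelling'' statement (as in Farb or in Dru\c{t}u--Sapir Thm.\ 4.1 and its consequences), which is strictly stronger than Lemma~\ref{M:lem} as quoted here.

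Third, and most seriously, you yourself flag that the non-parabolicity step is where the difficulty lies, and the resolution you propose -- pass to an asymptotic cone, invoke Corollary~\ref{diffproj:cor}, translate back by overspill -- is only a programme, not an argument. In particular ``the path $(h_1h_2)^k$ projects to a geodesic in a tree-graded cone'' is an assertion that needs the exact uniform quasi-geodesicity you have not yet justified; and ``forces a nontrivial transversal component'' needs a quantitative version at finite scale to contradict $w^m\in\g H_i\g^{-1}$. Osin's actual proof avoids the cone entirely and works with relative presentations and isoperimetric estimates (his ``$\Omega$-chain'' machinery) to establish directly that suitably chosen products of parabolics from distinct cosets act loxodromically on the relative Cayley graph. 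In short: the ping-pong picture is the right heuristic, but the steps you label as hard are in fact the entire content of Osin's theorem, and your sketch does not supply them.
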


Fix such $g$.

\begin{lemma}{\cite[Corollary 1.7]{Os2}}
$g$ is contained in a virtually cyclic subgroup $E(g)$ of $g$ such that $G$ is hyperbolic relative to $H_1,\dots,H_n, E(g)$.
\end{lemma}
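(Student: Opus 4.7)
The plan is to define $E(g)$ explicitly, show it is virtually cyclic, and then verify via the asymptotic definition of relative hyperbolicity that the enlarged family $\{H_1,\dots,H_n,E(g)\}$ still induces a tree-graded structure on every $C(G,\nu)$.

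First I set
$$E(g)\;=\;\{h\in G\;:\;hg^nh^{-1}=g^{\pm n}\text{ for some }n\in\N^+\}.$$
Since $g$ is hyperbolic, the powers $\{g^k\}_{k\in\Z}$ form a bi-infinite $(1,C)$-quasi-geodesic in $\mathcal{CG}_S(G)$ for some constant $C$: if a long subword tracked an $H_i$-coset, Lemma~\ref{M:lem} combined with the fact that $g$ is not conjugate into any $H_i$ would produce a contradiction. Consequently the image of $\{g^k\}$ in every asymptotic cone $C(G,\nu)$ is a $\calP$-transverse geodesic line $L_g$, unique through its points by Lemma~\ref{onlygeod:lem}. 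The subgroup $E(g)$ preserves $L_g$, acting by translations and reflections with finite kernel, hence $E(g)$ is virtually $\Z$.

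To prove $G$ is hyperbolic relative to $H_1,\dots,H_n,E(g)$ I would use Theorem~\ref{projgrad:thm} together with Remark~\ref{mixed:rem} on the enlarged collection $\calP'$ of internal cosets in the cone: old pieces plus all $G$-translates of $L_g$. The projection onto a new piece is the closest-point projection to the corresponding axis, and the old projections are unchanged. The substance reduces to a single \emph{almost-malnormality} statement: $hE(g)h^{-1}\cap E(g)$ is finite for every $h\notin E(g)$, together with bounded intersections of $E(g)$-cosets with $H_i$-cosets. In cone language this says distinct new pieces share at most one point and meet old pieces in at most one point, which gives $(T_1)$ and the local constancy required of the new projections. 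Transverse-freeness of $\calP'$ is then inherited from that of $\calP$, since a $\calP'$-transverse triangle is either already $\calP$-transverse or has a side along a new axis, and the latter reduces to a tripod after projecting the triangle to $L_g$.

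The main obstacle is the almost-malnormality statement, which does not seem to follow formally from Lemma~\ref{M:lem}. My approach would be by contradiction via asymptotic cones: if two distinct cosets of $E(g)$ had arbitrarily long coarse intersection in $\mathcal{CG}_S(G)$, passing to a suitable cone would produce two distinct $\calP$-transverse geodesic lines sharing a non-trivial subsegment, contradicting Lemma~\ref{onlygeod:lem}. A parallel argument rules out long intersections of $E(g)$-cosets with $H_i$-cosets, using again that no conjugate of $g$ lies in any $H_i$. This is the genuinely new input of the lemma and corresponds to Osin's structural analysis of hyperbolic elements in \cite{Os2}.
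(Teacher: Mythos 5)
The paper does not prove this statement; it is imported verbatim as~\cite[Corollary~1.7]{Os2}, so there is no in-paper argument to compare against. Your proposal is an attempt to re-derive Osin's result from scratch using the Dru\c{t}u--Sapir cone definition, which is a legitimate but genuinely different route from Osin's, who works with Farb's/Bowditch's characterizations and relative Dehn functions. Your outline correctly identifies the two real inputs: that $\langle g\rangle$ is Morse quasi-geodesic (so it survives as a $\calP$-transverse line in cones), and that $E(g)$ is almost malnormal with bounded coset intersections against the $H_i$'s.

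However, the step you flag as the crux does contain a gap. You argue that if $hE(g)h^{-1}\cap E(g)$ were infinite for $h\notin E(g)$, passing to a cone would produce two distinct $\calP$-transverse geodesic lines sharing a non-trivial subsegment, ``contradicting Lemma~\ref{onlygeod:lem}.'' That lemma only asserts uniqueness of the geodesic \emph{between a fixed pair of endpoints} when one such geodesic is transverse; in a real tree (or a transversal tree of a tree-graded space) two distinct geodesic lines can perfectly well agree on a non-trivial segment and diverge beyond both endpoints. So the existence of a shared subsegment is not, by itself, a contradiction. To close the argument you would need to upgrade ``the axes of $E(g)$ and $hE(g)h^{-1}$ fellow-travel over arbitrarily long stretches'' into ``$h$ stabilizes the axis of $g$ up to bounded error, hence $h\in E(g)$,'' which is precisely the quasi-convexity/stabilizer argument Osin carries out; it does not follow formally from uniqueness of transverse geodesics in the cone. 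Similarly, ``bounded intersection of $E(g)$-cosets with $H_i$-cosets'' is asserted but not derived. Finally, you do not verify that the projection system for the enlarged collection $\calP'$ really satisfies $(P1)$--$(P3)$ in the cone; you only check transverse-freeness (which, as you note, is immediate since $\calP\subseteq\calP'$). None of these are fatal to the approach, but they are exactly where the work of the cited corollary lives.
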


We are ready to study transversal trees.
\par
Let $X$ be the asymptotic cone of a group $G'$ hyperbolic relative to $H'_1,\dots, H'_k$ with basepoint $e\in\st G$ and scaling factor $\nu$. We have that $X$ is asymptotically tree-graded with respect to the set of pieces $\calP=\calP_1\cup\dots\cup\calP_k$, where elements of $\calP_i$ are induced by left $\st$cosets of $\st H'_i$.
\par
Let us start with counting how many pieces contain a fixed point.

\begin{lemma}\label{cardpieces:lem}
For each $i\in\{1,\dots,k\}$ and $x\in X$, $P(i,x)=\{P\in\calP_i|x\in P\}$ has cardinality $2^{\aleph_0}$.

\end{lemma}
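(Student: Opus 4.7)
By homogeneity (left multiplication by any representative of $x$ is an isometry of $X$ that permutes the $\st$cosets of $\st H'_i$ and hence permutes $\calP_i$), I may assume $x=[e]$. The upper bound $|P(i,[e])|\leq 2^{\aleph_0}$ is then immediate, since elements of $\calP_i$ are indexed by a subset of the set of left $\st$cosets of $\st H'_i$ and $|\st G'|\leq 2^{\aleph_0}$ by the cardinality remark. The plan for the lower bound is to exploit powers of a hyperbolic element to produce $2^{\aleph_0}$ distinct $\st$cosets of $\st H'_i$ whose induced pieces all contain $[e]$.

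I would fix a hyperbolic element $g\in G'$ together with the virtually cyclic $E(g)$ given by the cited results of Osin, so that $G'$ is hyperbolic relative to $\{H'_1,\dots,H'_k,E(g)\}$. The construction rests on two observations: (a) $d_{G'}(g^m,e)=O(|m|)$ for $m\in\st\Z$ (transfer of the word-length inequality $d_{G'}(g^n,e)\leq|g|_S\cdot|n|$ in $G'$); and (b) $g^m\notin\st H'_i$ for every $m\in\st\Z\setminus\{0\}$. For (b), I first note that the almost-malnormality of the extended peripheral structure forces $E(g)\cap H'_i$ to be finite, so $\langle g\rangle\cap H'_i$ is a finite subgroup of $\Z$ and hence trivial; then $g^m\in\st H'_i$ with $m=[(m_n)]$ would force $m_n=0$ for $\calU$-many $n$, i.e.\ $m=0$ in $\st\Z$.

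To assemble $2^{\aleph_0}$ cosets I parametrize by reals: for each $t\in(0,1)\subset\R$ set $s_t=\lfloor t\sqrt{\nu}\rfloor\in\st\Z$. Since $\sqrt\nu$ is infinite while $\sqrt\nu/\nu$ is infinitesimal, $s_t=o(\nu)$, so (a) gives $d(g^{s_t},e)=o(\nu)$ and hence $[g^{s_t}]=[e]$ lies in the piece $P_t$ induced by the coset $g^{s_t}\st H'_i$. For $t_1\neq t_2$ the difference $s_{t_1}-s_{t_2}$ is an infinite element of $\st\Z$, so by (b) the cosets $g^{s_t}\st H'_i$ are pairwise distinct. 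Each $P_t$ is a bilipschitz image of an asymptotic cone of the infinite group $H'_i$, hence contains more than one point; therefore, by the second clause of the asymptotically tree-graded definition, distinct cosets yield distinct elements of $\calP_i$. This produces $2^{\aleph_0}$ pieces in $P(i,[e])$, completing the lower bound.

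The main obstacle is observation (b): the bare definition of hyperbolic element only prevents $g$ itself from being conjugate into some $H'_j$, so a priori a nontrivial power of $g$ could lie in $\st H'_i$. Enlarging the peripheral structure by $E(g)$ is exactly what activates almost-malnormality and rules this out, and it is this step that one must be careful to justify by invoking the appropriate results of Osin.
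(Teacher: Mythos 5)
Your proof is correct, but it follows a genuinely different route from the paper's. For the lower bound the paper uses a pure counting argument: it lets $f(n)$ be the number of left cosets of $H'_i$ with a representative within distance $n$ of $e$, notes that $f$ is unbounded because $H'_i$ has infinite index, fixes an infinite $\xi\in\st\N$ with $\xi\in o(\nu)$, and observes that the internally-finite collection of $\st$cosets at $\st$distance $<\xi$ from $e$ has infinite internal cardinality $f(\xi)$, hence external cardinality at least $2^{\aleph_0}$; all of these project to pieces through $e$. That argument needs nothing from relative hyperbolicity beyond the infinite-index hypothesis, plus the standard nonstandard-analysis fact that an infinite internal set has external cardinality at least $2^{\aleph_0}$. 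You instead exhibit an explicit injection $t\mapsto g^{\lfloor t\sqrt{\nu}\rfloor}\st H'_i$ from $(0,1)$ into the cosets through $e$, at the price of importing Osin's hyperbolic element $g$, the virtually cyclic $E(g)$, and almost-malnormality of the enlarged peripheral structure to secure that $\langle g\rangle\cap H'_i$ is trivial. What you gain is that a concrete $\R$-parametrized family dispenses with the cardinality-of-infinite-internal-sets fact; what you pay is dependence on the $E(g)$ machinery, which the paper reserves for Proposition~\ref{valency:prop}, where it is genuinely required. Both upper bounds are routine: you use $|\st G'|\leq 2^{\aleph_0}$, the paper uses $|X\setminus\{e\}|\leq 2^{\aleph_0}$ by assigning to each piece through $e$ a point of it other than $e$ (injective since distinct pieces share at most one point); either works.
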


\begin{proof}
As $X$ is homogeneous through isometries which preserve the pieces (those induced by left translations by suitable elements of $\st G$), it is enough to determine the cardinality of $P(i,e)$. Consider the function $f:\N\to\N$ such that $f(n)$ is the number of left cosets of $H'_i$ which have a representative closer than $n$ to $e$. We have that $f$ is of course increasing an unbounded. In particular, for each infinite $\xi\in\st \N$, $f(\xi)$ is an infinite number.
Let us fix an infinite $\xi\in o(\nu)$. The left $\st$cosets counted by $f(\xi)$ give distinct elements of $P(i,e)$, so $|P(i,e)|\geq 2^{\aleph_0}$. Also $|X\backslash\{e\}|\leq 2^{\aleph_0}$ and, as different pieces can intersect in at most one point and each piece contains infinite points, $|P(i,e)|\leq |X\backslash\{e\}|$ (for each $P\in P(i,e)$ consider a point in $P$ different from $e$). So, we obtained the inequality $|P(i,e)|\leq 2^{\aleph_0}$, and hence the thesis.
\end{proof}

Now, let us focus on transversal trees in an asymptotic cone $Y$ of $G$ (recall that we denote the transversal tree at $e$ by $T_e$, see Definition \ref{transv:defn}). Note that they are isomorphic homogeneous trees, so we only need to study the valency of $T_e$ in $e$.

\begin{prop}\label{valency:prop}
The valency of $T_e$ in $e$ is $2^{\aleph_0}$.
\end{prop}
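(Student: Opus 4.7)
The plan is to invoke Osin's enlargement theorem to put a second, finer tree-graded structure on the same cone, and then use the extra pieces to build many branches at $e$. Concretely, since $G$ is also hyperbolic relative to $\calH'=\{H_1,\dots,H_n,E(g)\}$, the cone $Y$ carries a second tree-graded structure $\calP'$, whose pieces are those of $\calP$ together with a new family $\calP_{E(g)}$ induced by the left $\st$cosets of $\st E(g)$. Because $E(g)$ is virtually cyclic and infinite, every piece in $\calP_{E(g)}$ is bilipschitz equivalent to $\R$, i.e.~a bi-infinite geodesic line in $Y$.

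The key observation is that any $L\in\calP_{E(g)}$ passing through $e$ lies entirely in the transversal tree $T_e$. Indeed, hyperbolicity of $g$ forces $E(g)\neq H_i$ for every $i$, so the families of cosets defining $\calP$ and $\calP_{E(g)}$ are disjoint; as a piece of $\calP'$, $L$ therefore intersects every piece of $\calP$ in at most one point by $(T_1)$, which shows that the subgeodesic of $L$ from $e$ to any $y\in L$ is $\calP$-transverse, witnessing $y\in T_e$. Moreover, applying $(T_1)$ inside $\calP'$, two distinct such lines through $e$ meet only at $e$; hence each contributes two distinct branches of $T_e$ at $e$, and branches arising from distinct lines remain disjoint in a neighbourhood of $e$.

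It remains to produce $2^{\aleph_0}$ such lines through $e$, which I would do by copying the proof of Lemma~\ref{cardpieces:lem} verbatim for $E(g)$ in place of $H'_i$. Note first that $G$ is not virtually cyclic, since by convention $H_1$ is an infinite subgroup of infinite index, which is impossible in a virtually cyclic group; hence $[G:E(g)]=\infty$ and the function $f\colon\N\to\N$ counting left cosets of $E(g)$ with a representative in $B_n(e)$ is unbounded. For any infinite $\xi\in o(\nu)$, $f(\xi)$ is an infinite hypernatural, so there are at least $2^{\aleph_0}$ distinct left $\st$cosets of $\st E(g)$ admitting a representative within distance $\xi$ of $e$, each inducing a piece of $\calP_{E(g)}$ containing $e$. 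Combined with the matching upper bound $|T_e|\le|Y|\le 2^{\aleph_0}$, this yields valency exactly $2^{\aleph_0}$.

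The only real technical point is the inclusion $L\subseteq T_e$: one must check that a piece of $\calP_{E(g)}$, which is by definition only $\calP'$-transverse to other pieces of $\calP'$, is actually $\calP$-transverse. This is precisely where Osin's enlargement does the work, since it packages $L$ as a bona fide piece of a tree-graded structure on the same underlying cone, so that $(T_1)$ for $\calP'$ immediately supplies the needed transversality with respect to the coarser family $\calP$.
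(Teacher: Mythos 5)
Your proposal is correct and follows the paper's own argument: invoke Osin's theorem to enlarge the peripheral structure to $\{H_1,\dots,H_n,E(g)\}$, use $(T_1)$ of the enlarged tree-graded structure to see that lines induced by left $\st$cosets of $\st E(g)$ are $\calP$-transverse and hence lie in transversal trees, apply Lemma~\ref{cardpieces:lem} (with $E(g)$ among the peripherals) to produce $2^{\aleph_0}$ such lines through $e$, and bound above by $|Y|\le 2^{\aleph_0}$. You spell out a couple of steps the paper leaves implicit (distinct lines meeting only at $e$, and the non-virtual-cyclicity of $G$ to get infinite index of $E(g)$), but the route is the same.
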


\begin{proof}
We have that $\st E(g)$ induces a line in $Y$, as it is quasi-isometrically embedded in $G$. This line intersects each piece induced by a left $\st$coset of some $H_i$ in at most one point, because this line belongs to a set of pieces including the sets induced by left $\st$cosets of the $H_i$'s, so property $(T1)$ applies. Using the previous lemma, we get that $T_e$ contains $2^{\aleph_0}$ geodesic lines. As the valency of $T_e$ cannot be more than $|T_e|\leq|X|\leq 2^{\aleph_0}$, it must be exactly $2^{\aleph_0}$.
\end{proof}

\subsection{Transversal trees in minimal tree-graded structures}
Let $G$ be a group and set $X=C(G,e,\nu)$. Also, to avoid trivialities, assume that $G$ is non-virtually cyclic. Suppose that $X$ contains a cut-point. Then, by the proof of~\cite[Lemma 2.31]{DS1}, we have that $X$ is tree-graded with respect to $\calP$, the collection of maximal subsets of $X$ which (consist of a single point or) do not contain cut-points. We will refer to $\calP$ as the \emph{minimal} tree-graded structure of $X$. Notice that for each $g\in\st G$ and $P\in\calP$ such that $gP$ is defined, we have $gP\in\calP$ (as the characterization of the elements of $\calP$ is invariant under isometries).

\begin{lemma}\label{manypieces:lem}
For each $P\in\calP$ which contains $e$ and does not consist of a single point there exists $g\in\st G$ such that $gP\neq P$ and $gP$ contains $e$.

\end{lemma}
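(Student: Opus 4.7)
My plan is to exhibit the desired $g$ explicitly. Take any point $p\in P$ with $p\neq e$ and a representative $h\in\st G$ so that $p=[h]$, with $d(h,e)\in O(\nu)\setminus o(\nu)$. The natural first candidate is $g_1:=h^{-1}$: it acts isometrically on $X$ because $d(h^{-1},e)=d(h,e)\in O(\nu)$ (word length is invariant under inversion for a symmetric generating set), it sends $p$ to $e$ via $g_1\cdot p=[h^{-1}h]=[e]=e$, and the set $g_1P$ is again a piece of $\calP$ (because the defining property of pieces in the minimal tree-graded structure — being a maximal connected subset without cut-points — is purely metric and hence invariant under the isometries of $X$ induced by $\st G$). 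In particular $g_1P$ contains $e$. If $g_1P\neq P$, we are done.

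Otherwise $h^{-1}P=P$, i.e.\ $h$ stabilises $P$ setwise. In this case I would replace $g_1$ by $g:=sh^{-1}$ for a carefully chosen infinitesimal $s\in\st G$ (meaning $d(s,e)\in o(\nu)$, so that $s$ fixes $e$ as a point of $X$). One then still has $gp=s\cdot(h^{-1}\cdot p)=s\cdot e=e$, and $gP=sh^{-1}P=sP$, so the task reduces to producing an infinitesimal $s$ with $sP\neq P$.

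To build such $s$ I would argue by contradiction: assume every infinitesimal element of $\st G$ stabilises $P$. Then in particular each $\gamma\in G$ (which is infinitesimal with respect to $\nu$, since $|\gamma|_S\in O(1)\subset o(\nu)$) acts on $X$ preserving $P$ under left translation, so the entire group $G$ preserves $P$ setwise. On the other hand, because $X$ is homogeneous and has at least one cut-point, every point of $X$ is a cut-point, while no point of $P$ is a cut-point of $P$ itself (by the definition of the minimal structure and the assumption $|P|>1$); hence $P\neq X$. Pick any $x\in X\setminus P$; by the transitive action of $\st G$ on $X$ choose $h_x\in\st G$ with $[h_x]=x$. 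Using non-virtual-cyclicity of $G$, one finds $\gamma\in G$ whose conjugate $h_x^{-1}\gamma h_x$ has word length comparable to $\nu$ (conjugation in $G$ can ``unfold'' bounded elements to arbitrary length), so that the image of $p$ under $\gamma$ picks up $x$ in its orbit closure and lands outside $P$, contradicting $G\cdot P=P$.

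The main obstacle is the last step: turning ``every infinitesimal stabilises $P$'' into a contradiction in the generality of a non-virtually-cyclic $G$. This requires carefully linking the group-theoretic infinitesimal subgroup to the metric structure of the piece $P\subsetneq X$, and is where both the non-virtual-cyclicity of $G$ and the fine tree-graded structure (projections onto pieces, the asymmetry between cut-points of $X$ and the absence of cut-points of $P$) enter the argument essentially.
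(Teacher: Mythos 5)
Your first step — taking $g_1=h^{-1}$ with $[h]=p\in P\setminus\{e\}$ and disposing of the case $g_1P\neq P$ — is fine, and the observation that pieces of the minimal structure are permuted by the isometries induced by $\st G$ is correct. The genuine gap is in your second case. When $h^{-1}$ stabilises $P$ you reduce to producing an $s\in\st G$ with $d(s,e)\in o(\nu)$ and $sP\neq P$, but the argument you sketch does not establish this. If every such infinitesimal $s$ stabilised $P$ then indeed every $\gamma\in G$ would stabilise $P$; however that is not visibly contradictory, since $G\subset\st G$ acts on $X$ fixing the basepoint $e$, and nothing a priori prevents a proper subset $P\subsetneq X$ through $e$ from being invariant under that action. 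Your proposed contradiction uses a conjugate $h_x^{-1}\gamma h_x$, but this element lies outside $G$ and is not infinitesimal, so the assumption ``$G$ stabilises $P$'' says nothing about it, and the phrase ``the image of $p$ under $\gamma$ picks up $x$ in its orbit closure'' is not a meaningful deduction.

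The paper avoids the case split entirely. It starts from $g$ with $[g]\notin P$ — then $gP\neq P$ is automatic, since $gP=P$ would give $g\cdot e=[g]\in P$ — and proves the repulsion estimate $d(gp,p)\geq 2d(e,p)-d(e,[g])$ using projections onto $P$ and $gP$. It then notes that for each $n$ one can find such a $g$ with $d(e,[g])<1/n$ together with an internal condition saying that short paths in $\st G$ from $q$ to $gq$ pass within $\nu/n$ of $e$, and applies saturation (cf.\ Lemma~\ref{underspill:lem}) to produce a single $g$ with $d(e,g)\in o(\nu)$ and the property that \emph{every} path from $p$ to $gp$ passes through $e$; since $P$ has no internal cut-point, this forces $gP\neq P$. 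That saturation step is precisely the device you would need to manufacture your infinitesimal $s$, and without it (or an equivalent) your proof remains incomplete.
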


\begin{proof}
Consider an element $g\in\st G$ such that $gP$ is defined and $[g]\notin P$. We claim that for each $p\in P$ we have $d(gp,p)\geq 2d(e,p)-d(e,[g])$. To show this, notice that $gP\neq P$ so that, for $x=\pi_P(gP),y=\pi_{gP}(P)$, we have $d(p,gp)=d(p,x)+d(x,y)+d(y,gp)$. This holds in particular for $p=e$, so
$$d(p,gp)+d(e,[g])\geq \left(d(p,x)+d(x,e)\right)+\left(d(y,gp)+d(ge,y)\right)$$
$$\geq d(e,p)+d(ge,gp)=2d(e,p),$$
what we wanted.
\par
Notice that for each $n\in\N^+$ we can find $g$ as above such that $d(e,[g])<1/n$.
Also, we have the following property, for each $n\in \N^+$ and some fixed $p\in P\backslash\{e\}$ and $q\in\st G$ with $[q]=p$:
\par
``each path in $\st G$ obtained concatenating at most $n$ internal geodesics connecting $q$ to $g q$ contains a point whose distance from $e$ is a most $\nu/n$.''
\par
Saturation (see also Lemma~\ref{underspill:lem}) gives that there exists $g$ satisfying $d(e,[g])<1/n$ and the property above for each $n\in\N^+$. In particular, $d(e,g)\in o(\nu)$, $d(gp,p)=2d(e,p)$ and all paths from $p$ to $gp$ contain $e$ (as paths can be approximated arbitrarily well by concatenations of geodesics induced by internal geodesics). Clearly, $g$ is as required.
\end{proof}

\begin{prop}\label{transvmin:prop}
 $T_e$ is an homogeneous tree with valency $2^{\aleph_0}$.
\end{prop}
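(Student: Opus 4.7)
My plan is to adapt the proof of Proposition~\ref{valency:prop} to the minimal tree-graded setting, with Lemma~\ref{manypieces:lem} playing the role of the hyperbolic-element construction. The upper bound on the valency is immediate, since $|T_e|\leq |X|\leq 2^{\aleph_0}$, using that the nonstandard extension of a set of cardinality at most $2^{\aleph_0}$ again has cardinality at most $2^{\aleph_0}$.

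For the lower bound I plan to proceed in two steps. First, I want to produce $2^{\aleph_0}$ distinct pieces through $e$. In the relatively hyperbolic case this was Lemma~\ref{cardpieces:lem}, counting left cosets of some $H_i$ near $e$; in the minimal setting, I would start with any non-trivial piece $P$ through $e$ (available since $G$ is not virtually cyclic and $X$ has cut-points, so not every maximal cut-point-free subset through $e$ can be a singleton) and parametrize the construction in Lemma~\ref{manypieces:lem} by an infinite initial segment of $\st\N$ to obtain $2^{\aleph_0}$ distinct translates $gP$, all passing through $e$. The upper bound on the number of pieces through $e$ would follow exactly as in Lemma~\ref{cardpieces:lem}, since $|X|\leq 2^{\aleph_0}$ and distinct pieces through $e$ intersect only at $e$ by $(T_1)$.

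Second, I need to convert this supply of pieces into $2^{\aleph_0}$ pairwise distinct transverse germs of $T_e$ at $e$. This is where the argument must genuinely diverge from the relatively hyperbolic case, because pieces in the minimal structure are not themselves transverse geodesics; going into a non-trivial piece is never transverse. My plan is to exploit the conclusion of Lemma~\ref{manypieces:lem} that for $p\in P$ with $[p]\neq e$ all paths between $p$ and $gp$ pass through $e$: this forces the existence of geodesic arcs through $e$ bridging $P$ and $gP$, and by applying Lemma~\ref{concat:lem} together with the projections $\pi_P, \pi_{gP}$, I would extract from each such pair a transverse germ at $e$ lying in the tree-like complement of the pieces through $e$. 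Pairwise distinctness of the germs follows because distinct pieces through $e$ meet only at $e$ by $(T_1)$, so the associated transverse directions cannot share a nontrivial initial segment.

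The homogeneity of $T_e$ follows from the transitive action of $\st G$ on $X$: for any $y\in T_e$ there exists $g\in\st G$ with $gy=e$, and since the transversal tree is intrinsically defined, $g(T_e)=T_{gy}=T_e$; thus $g$ restricts to an isometry of $T_e$ carrying $y$ to $e$, equating the valencies at $y$ and $e$. The main obstacle I foresee is the second step: extracting $2^{\aleph_0}$ genuinely distinct transverse germs at $e$ from the $2^{\aleph_0}$ pieces through $e$, since transverse branches live in the complement of those very pieces and the correspondence between the two collections must be made precise via a careful concatenation argument using Lemma~\ref{concat:lem}.
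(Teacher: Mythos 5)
The proposal contains a genuine gap at precisely the point you flag as the foreseeable obstacle, and the proposed mechanism for overcoming it does not work. A geodesic through $e$ bridging $P$ and $gP$ (as guaranteed by Lemma~\ref{manypieces:lem}) has, on each side of $e$, a nontrivial initial segment contained in $P$ or $gP$; by definition such a segment is \emph{not} transversal, since it meets a piece in more than one point. Neither Lemma~\ref{concat:lem} nor the projections $\pi_P,\pi_{gP}$ turn these piece-bound germs into transversal ones, so the construction produces zero elements of $T_e\setminus\{e\}$, not $2^{\aleph_0}$. More fundamentally, the proposal never establishes that $T_e$ is nontrivial at all: it is consistent with everything in your argument that the minimal tree-graded structure has $2^{\aleph_0}$ pieces through every point but trivial transversal trees, and ruling this out is the real content of the proposition.

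The paper's proof spends most of its effort on exactly this: it first characterizes transversal rays as those whose closest-point projection satisfies $(P2)$ (locally constant off the ray), then encodes approximate versions of this as internal properties $\phi(\gamma,n)$ of internal geodesics in $\st G$, builds internal ``approximate transversal rays'' $\gamma_n$ by concatenating short geodesics through the translated pieces $P_j=g_0^jP_0$, and finally applies saturation to produce a single genuine transversal ray. Only after that does it multiply: given the ray and a point $q$ on it, the estimate $d(g_ip,g_jp)=2d(e,p)+d([g_i],[g_j])$ and overspill yield $\mu$-many translates $g_1,\dots,g_\mu$ ($\mu$ infinite) near $e$ with $d(g_iq,g_jq)\geq 2d(e,q)$, whose images $g_i[q]$ fall in distinct components of $T_e\setminus\{e\}$. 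Your step of first counting pieces through $e$ (the analogue of Lemma~\ref{cardpieces:lem}) is not the right intermediate object here; the paper does not use it in the minimal case. Your upper bound on the valency and your homogeneity argument are both fine and match the paper.
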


\begin{proof}
 Notice that in the case that each $P\in\calP$ is a point, then $X$ is a real tree. By~\cite{Si}, $X$ is a point, a line or a homogeneous tree of valency $2^{\aleph_0}$. In the first 2 cases, $G$ is virtually cyclic (see~\cite[Proposition 6.1]{DS1}). So, the case when each $P\in\calP$ consists of a single point is set. Notice that (by homogeneity of $X$ and the definition of $\calP$) if there exists $P\in\calP$ containing at least 2 points, then the same is true for each element of $\calP$.
\par\smallskip
 \noindent {\bf Finding a ray contained in $T_e$.}
If $\gamma$ is a geodesic in $X$ (resp. an internal geodesic in $\st G$) any map $\pi:X\to\gamma$ (resp. $\pi:\st G\to\gamma$) satisfying $d(x,\gamma)=d(x,\pi(x))$ for each $x\in X$ (resp. for each $x\in\st G$) will be called a \emph{closest point projection} on $\gamma$.
\par
The idea is to use the fact that a transversal ray $\gamma$ has the property that the closest point projection on $\gamma$ satisfies property $(P2)$ (i.e. it is locally constant outside $\gamma$). This is easily seen as, for example, $\calP\cup\{\gamma\}$ gives a tree-graded structure for $X$ when $\gamma$ is a ray contained in $T_e$, and the closest point projection on a piece satisfies $(P2)$. Also, we claim that if a ray $\delta$ is not contained in a transversal tree then there is no closest point projection on $\delta$ satisfying $(P2)$.
In fact, by definition, $\delta$ intersects some $P\in\calP$ in a non-trivial subpath $\delta'$. As $P$ does not have cut-points, there exists a path $\alpha$ connecting the endpoints $x,y$ of $\delta'$ and not containing the midpoint $p$ of $\delta'$. However, if a closest point projection $\pi$ existed, it would be continuous at $x$ and $y$ and we would have $\pi(x)=x, \pi(y)=y$. This is easily seen to contradict $(P2)$.
\par
Now, we wish to show that a transversal ray $\g$ exists by showing that, informally, there are geodesics such that any closest point projection on them is arbitrarily close to satisfying $(P2)$, and then a saturation argument will quickly lead to the conclusion.
\par
Fix any $n\in\N^+$. If $\g$ is an internal geodesic, denote by $\phi(\gamma,n)$ the following property:
\par
``for each closest point projection $\pi$ on $\g$ and for each $x,y\in \st G$ such that $d(x,e),d(y,e)\leq n \nu$ and $d(x,y)\leq d(x,\gamma)/2$ we have $d(\pi(x),\pi(y))\leq \nu/n$.''
\par
 We will construct an internal geodesic $\g_n$ in $\st G$ satisfying the following properties:
\begin{itemize}
 \item $e$ is the starting point of $\g_n$,
 \item $l(\gamma_n)\geq n \nu$,
 \item $\gamma_n$ satisfies $\phi(\gamma_n,n)$.
\end{itemize}
Once we can construct such geodesics, we are done as saturation implies that there exists a geodesic $\gamma$ satisfying the above properties for each $n\in \N^+$. It is clear that $\gamma$ induces a transversal ray in $X$, by the previous discussion on properties of the closest point projection on (non-)transversal rays.
\par\smallskip
 \noindent {\bf Constructing the ``approximate transversal rays''.}
The idea to construct such geodesics is just to concatenate short geodesics in different pieces. Let us show how to construct $\gamma=\gamma_n$. We will first construct a geodesic $\delta$ in $X$. Choose a geodesic $\delta_0$ in $X$ starting at $e$ contained in a piece $P_0\in\calP$ and such that $0<l(\delta_0)\leq 1/(10n)$. Lemma~\ref{manypieces:lem} readily implies that there exists $g_0\in\st G$ such that $g_0 P_0\neq P_0$ and $[g_0]=p_0$. We can define inductively $\delta_i$ as the concatenation of $\delta_{j-1}$ and $g_0^{j}\delta_0$. Set $P_i=g_0^i P_0$. Notice that for each $i$ we have that $Q_i=\bigcup_{j\leq i} P_j$ is a piece in some tree-graded structure of $X$.
\par
Let $\gamma\subseteq \st G$ be an internal geodesic connecting $e$ to a representative of the endpoint $p_i$ of $\delta_i$, for $i$ big enough, and let $\delta$ be the induced geodesic in $X$. As $g_0^{j}\delta_0$ and $g_0^{j+1}\delta_0$ concatenate well, it is easily seen for $i$ big enough $\gamma$ has length at least $n\nu$.
Let $\pi$ be a closest point projection on $\gamma$. Notice that, for each $p\in X$, if $\pi_{Q_i}(p)\in P_j$, then $[\pi(q)]\in P_j$ as well, for any $q$ such that $[q]=p$.
Suppose that $d(\pi(x),\pi(y))\geq 10l(\delta_0)$, for some $x,y$ with $d(x,e),d(y,e)\in O(\nu)$. It is easily seen that for each $j,k$ such that $\pi_{Q_i}([x])\in P_j,\pi_{Q_i}([y])\in P_k$ we have $|j-k|\geq 5$. Fix such $j,k$ and suppose $j<k$. It is easy to show inductively that $\pi_{P_j}(P_k)=\{g_0^{k}\}$ and $\pi_{P_k}(P_j)=\{g_0^{j+1}\}$. So, we have that each geodesic from $[x]$ to $[y]$ contains $g_0^{k}$ and $g_0^{j+1}$, as it contains $\pi_{Q_i}([x])$ and $\pi_{Q_i}([y])$ and any geodesic connecting them contains the claimed points.
\par
This easily implies $d([x],[y])> d([x],[\pi(x)])$. In particular, given $x,y$ such that $d(x,y)\leq d(x,\gamma)/2$ then we must have $d(\pi(x),\pi(y))< 10 l(\delta_0)\nu\leq \nu/n$, and this shows $\phi(\gamma,n)$. To sum up, we showed that there exists $\gamma$ satisfying the second and third condition required for $\gamma_n$, and by definition of $\gamma$ the third condition is satisfied as well.
\par\smallskip
 \noindent {\bf Finding many rays contained in $T_e$.}
Up to now we proved that $T_e$ contains a ray $\gamma$. Next, let us use this ray to construct several other rays in $T_e$ (containing $e$). Consider a piece $P$ containing $e$. Consider an element $g\in\st G$ such that $[g]\neq e$ and $[g]\in P$. We have that $d(gp,p)=2d(e,p)+d(e,[g])$ for each $p\in\gamma$. Also, if $[g_1]\neq [g_2]$ and $[g_1],[g_2]\in P$, we have $d(g_1p,g_2p)=2d(e,p)+d([g_1],[g_2])$. Fix $q$ such that $[q]\in\gamma$ and $[q]\neq e$. In view of the considerations above, for each $n\in \N^+$ we can find $g_1,\dots,g_n$ such that
\begin{enumerate}
 \item $d(g_i q,g_j q)\geq 2 d(e,q)$ for $i\neq j$,
 \item $d(g_i, e)\leq 1/n$ for each $i$.
\end{enumerate}
By overspill, we can find an infinite $\mu\in\st\N$ and $g_1,\dots, g_\mu$ with the same properties for some infinite $\mu$. In particular, it is easily seen that the valency of $T_e$ at $e$ is at least $2^{\aleph_0}$ as $g_i[q]$ is not in the same connected component of $T_e\backslash\{e\}$ as $g_j[q]$ when $i\neq j$.
\par
The homogeneity of $T_e$ follows from the homogeneity of $X$ together with the fact that the set of pieces is invariant under isometries (and therefore the set of transversal trees is also invariant).
\end{proof}

\subsection{Geodesics in tree-graded spaces}
We are going to need some results about the structure of geodesics in tree-graded spaces. Throughout the subsection $\F$ will denote a tree-graded space with respect to the collection of proper subsets $\calP$. Unfortunately, it is not true that all geodesics in $\F$ are obtained by concatenation of geodesics in transversal trees or pieces, as shown by the ``fractal'' geodesics used in the proof of Lemma 6.11 in~\cite{DS1}. We want to analyze how far this is from being true.
\par

\begin{rem}\label{addingtrantree:rem}
If $\F$ is tree-graded with respect to $\calP$, then it is tree-graded also with respect to the collection of subsets $\calP'$ obtained from $\calP$ by adding a collection of disjoint transversal trees which cover $\F$. When $\F$ is considered as a tree-graded space with respect to $\calP'$, all its transversal trees are trivial.

\end{rem}

The above remark tells us how we can reduce to studying tree-graded spaces with trivial transversal trees. Henceforth, let $\F$ be such a tree-graded space.

\begin{defn}
Let $\gamma:[0,l]\to\F$ be a geodesic.
\begin{itemize}
\item
A piece interval is an interval $[a,b)\subseteq [0,l]$ (with $a<b$) such that $\gamma([a,b))$ is contained in a piece and $[a,b]$ is a maximal interval with this property.
\item
The piece subset $P_\gamma$ is the union of all piece intervals.
\end{itemize}

\end{defn}

\begin{rem}
A maximal interval $I$ such that $\gamma(I)$ is contained in a certain piece is closed because pieces are closed in $\F$.
\end{rem}

\begin{rem}
By the fact that different pieces intersect in at most one point, different piece intervals are disjoint.
\end{rem}

It is not true that, for each geodesic $\gamma:[0,l]\to\F$, $P_\gamma$ is the entire $[0,l)$, however:

\begin{lemma}\label{piecesubdense:lem}
$\overline{P_\gamma}=[0,l]$.

\end{lemma}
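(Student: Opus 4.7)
The plan is to argue by contradiction. Suppose $\overline{P_\gamma}\neq[0,l]$; then there exist $c,d\in[0,l]$ with $c<d$ and $(c,d)\cap P_\gamma=\emptyset$. I will derive a contradiction by exhibiting a piece interval that meets $(c,d)$, combining the triviality of transversal trees in $\F$ with Corollary~\ref{emptypointsubgeod:cor}.

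The heart of the argument is to show that the sub-geodesic $\gamma|_{[c,d]}$ cannot be $\calP$-transverse. For if it were, then by Definition~\ref{transv:defn} every point of its image would lie in the transversal tree $T_{\gamma(c)}$, which by our standing assumption (see Remark~\ref{addingtrantree:rem}) is a single point. That would force $\gamma(c)=\gamma(d)$, contradicting $d(\gamma(c),\gamma(d))=d-c>0$. Hence some piece $P\in\calP$ satisfies $|P\cap\gamma([c,d])|\geq 2$, and Corollary~\ref{emptypointsubgeod:cor} implies that $\gamma^{-1}(P)$ is a closed subinterval $[a^*,b^*]$ of $[0,l]$ whose intersection with $[c,d]$ contains a non-degenerate subinterval $[a,b]$, i.e.\ $c\leq a<b\leq d$.

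Next I would identify $[a^*,b^*)$ as a piece interval. The only clause needing verification is maximality. This is forced by $(T_1)$: if $[A,B]\supsetneq[a^*,b^*]$ with $\gamma([A,B))$ contained in some piece $P'$, then the non-trivial sub-geodesic $\gamma([a^*,b^*))$ would lie in $P\cap P'$, forcing $P=P'$ and hence $[A,B)\subseteq\gamma^{-1}(P)=[a^*,b^*]$, a contradiction. Therefore $[a^*,b^*)\subseteq P_\gamma$, and since $(a,b)\subseteq[a^*,b^*)\cap(c,d)$ is non-empty, we reach the desired contradiction with $(c,d)\cap P_\gamma=\emptyset$.

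The only step requiring any conceptual input is the transversal-tree reduction, which crucially uses that we have passed (via Remark~\ref{addingtrantree:rem}) to a tree-graded structure with trivial transversal trees; the remainder is routine bookkeeping with Corollary~\ref{emptypointsubgeod:cor} and property $(T_1)$. I do not anticipate a genuine obstacle.
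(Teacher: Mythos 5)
Your proof is correct and follows essentially the same route as the paper: both arguments proceed by contradiction, use the standing reduction (Remark~\ref{addingtrantree:rem}) to trivial transversal trees to rule out a $\calP$-transverse subsegment, and then invoke Corollary~\ref{emptypointsubgeod:cor} (together with $(T_1)$ and closedness of pieces) to locate a piece interval inside the supposed gap. You simply spell out the maximality check that the paper leaves implicit.
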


\begin{proof}
We have that if $x\in [0,l]\backslash\overline{P_\gamma}$ then $x$ is contained in some open interval $I$ such that no non-trivial interval $I'\subseteq I$ has the property that $\gamma(I')$ is contained in just one piece. We have that $\gamma (I)$ is contained in a transversal tree (by Corollary~\ref{emptypointsubgeod:cor}), a contradiction since transversal trees are trivial.
\end{proof}

The following two definitions are given in order to capture the properties of a geodesic in a tree-graded space with trivial transversal trees.
For short, we will call closed-open interval an interval closed on the left and open on the right.

\begin{defn}
An almost filling of an interval $[l,m]$ is a collection $\{I_a\}_{a\in A}$ of non trivial closed-open intervals in $[l,m]$ (in particular $A$ is at most countable) such that
\begin{enumerate}
\item
if $a\neq a'$, $I_a$ and $I_{a'}$ are disjoint,
\item
$\bigcup_{a\in A} I_a$ is dense in $[l,m]$.
\end{enumerate}

\end{defn}

Before giving the next definition, let us describe the idea behind it. A P-geodesic is something which wants to keep track of the following data:
\begin{itemize}
\item
the kind of pieces a certain geodesic $\gamma$ intersects non-trivially,
\item
the maximal intervals of the domain of $\gamma$ mapped in a piece (the $\overline{I_a}$'s, for $I_a$ as above),
\item
the last point on $\gamma\cap P$ for some $P$ which $\gamma$ intersects non-trivially ($\Gamma(t)$ for every $t$ varying in the appropriate interval $I_a$).
\end{itemize}
More precisely, it is the associated almost filling that keeps track of the first and second kind of information.

\begin{defn}
Suppose we are given a family of pointed metric spaces $\{(P_i,r_i)\}_{i\in I}$. A P-geodesic $\Gamma$ with associated almost filling $\{I_a\}_{a\in A}$ of an interval $[l,m]$ and range $\{(P_i,r_i)\}_{i\in I}$ is a function $\Gamma:\bigcup I_a\to \bigsqcup P_i$ such that
\begin{enumerate}
\item
$\Gamma|_{I_a}$ is constant for each $a\in A$,
\item
denoting by $h_\Gamma:\bigcup I_a\to I$ the function such that $\Gamma(t)\in P_i\iff h_\Gamma(t)=i$, we have $d(r_{h_\Gamma(t)},\Gamma(t))=l(I_a)$.

\end{enumerate}
The function $h_\Gamma$ will be called the \emph{index selector} for $\Gamma$.
\end{defn}

We could equivalently define $\Gamma$ as a function with domain $A$. The reason we chose this definition is merely technical.
\par
Suppose now that $\F$ is a homogeneous tree-graded space such that each piece is homogeneous (we still assume that transversal trees are trivial). Let $\{P_i\}$ be a choice of representatives of isometry classes of the pieces. For each $i$, fix a basepoint $r_i\in P_i$ and set $\calP=\{(P_i,r_i\}$.
\par

\begin{defn}
A choice of charts is the choice, for each piece $P$ and $p\in P$, of an isometry between $P$ and $P_i$ sending $p$ to $r_i$, for some $(P_i,r_i)\in\calP$.

\end{defn}

Suppose that for each pair $(x,P)$, where $P$ is a piece and $x$ is a point contained in $P$, we have a fixed an isometry between $P$ and some $P_i$ sending $x$ to the corresponding $r_i$. Finally, fix a basepoint $p\in \F$. Given this data, we can associate to each geodesic $\gamma$ in $\F$ parametrized by arc length a P-geodesic.

\begin{lemma}\label{Pgeod:lem}
If $\gamma:[0,l]\to\F$ is a geodesic in $\F$ parametrized by arc length, then:
\begin{enumerate}
\item
The collection $\calI_\gamma=\{I_a=[q_a,q'_a)\}_{a\in A_\gamma}$ of all maximal closed-open subintervals $J$ of $[0,l]$ such that $\gamma|_J$ is contained in one piece is an almost filling of $[0,l]$.
\item
Consider the function $h_\Gamma:\bigcup I_a\to I$ which associates to each $t$ the only $i\in I$ such that $\gamma|_{I_a}$ is contained in a piece isometric to $P_{h(t)}$, where $t\in I_a$. Also, let $\Gamma:\bigcup I_a\to \bigsqcup P_i$ be such that $\Gamma(t)$ is the point identified with $\gamma(q'_a)$ under the identification of $(\gamma(q_a),P)$ with $(p_{h_\Gamma(a)},P_{h_\Gamma(a)})$, where $P$ is the piece which contains $\gamma|_{I_a}$. Then $\Gamma$ is a P-geodesic and $h_\Gamma$ is its index selector.
\item
$\{I_a\}$ and $\Gamma$ depend only on the endpoints of $\gamma$.

\end{enumerate}

\end{lemma}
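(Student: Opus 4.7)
I would verify the three parts in order, setting $x = \gamma(0)$ and $y = \gamma(l)$ throughout. The first two parts are essentially bookkeeping against the definitions, while part (3) is where the real content lies.

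\emph{Part (1).} I would check the three conditions of an almost filling. Non-triviality is built into the convention that each $I_a = [q_a, q'_a)$ has $q_a < q'_a$. For pairwise disjointness, suppose $I_a, I_b$ are distinct piece intervals associated to pieces $P_a, P_b$; by maximality $P_a \neq P_b$, so by axiom $(T_1)$ we have $|P_a \cap P_b| \leq 1$. If $I_a$ and $I_b$ overlapped on a non-trivial subinterval $J$, then $\gamma(J)$ would be an infinite subset of $P_a \cap P_b$, a contradiction. Hence they share at most a single point, and since the intervals are closed-open this forces them to be genuinely disjoint. Density of $\bigcup_a I_a$ in $[0, l]$ is exactly Lemma~\ref{piecesubdense:lem}.

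\emph{Part (2).} Both axioms for a P-geodesic are immediate. Constancy of $\Gamma|_{I_a}$ holds because the value $\Gamma(t)$ depends only on $a$ (it is the image of the point $\gamma(q'_a)$ under the chart attached to the pair $(P, \gamma(q_a))$), not on $t \in I_a$. For the second axiom, the chart is an isometry sending $\gamma(q_a) \mapsto r_{h_\Gamma(a)}$ and $\gamma(q'_a) \mapsto \Gamma(t)$, so $d(r_{h_\Gamma(t)}, \Gamma(t)) = d(\gamma(q_a), \gamma(q'_a)) = q'_a - q_a = l(I_a)$, where the middle equality uses that $\gamma$ is parametrized by arc length.

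\emph{Part (3).} This is where I expect the real work to lie. Given a second geodesic $\gamma'$ from $x$ to $y$, I would argue that $\calI_{\gamma'} = \calI_\gamma$ and $\Gamma' = \Gamma$ by identifying all of the data through the projections onto pieces. The pivotal claim is that a piece $P$ meets a geodesic from $x$ to $y$ in a non-trivial subgeodesic if and only if $\pi_P(x) \neq \pi_P(y)$: Lemma~\ref{propproj:lem}(1) shows that if $\gamma \cap P \neq \emptyset$ then both $\pi_P(x)$ and $\pi_P(y)$ lie on $\gamma$, and combined with Corollary~\ref{emptypointsubgeod:cor} this forces the endpoints of $\gamma \cap P$ to be exactly $\pi_P(x)$ and $\pi_P(y)$; conversely, Corollary~\ref{diffproj:cor} says that whenever the two projections differ, every geodesic from $x$ to $y$ meets $P$, and then the preceding argument shows that the intersection contains both projections. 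Thus both $\gamma$ and $\gamma'$ meet exactly the same collection of pieces non-trivially, and for each such $P$ they enter at $\pi_P(x)$ at parameter $d(x, \pi_P(x))$ and exit at $\pi_P(y)$ at parameter $d(x, \pi_P(x)) + d(\pi_P(x), \pi_P(y))$. This gives $\calI_\gamma = \calI_{\gamma'}$. Finally, since the chart is fixed by the pair $(P, \pi_P(x))$ independently of the geodesic chosen, $\Gamma(t)$ is the image of $\pi_P(y)$ under this chart for both $\gamma$ and $\gamma'$, completing the proof of (3).
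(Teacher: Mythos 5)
Your proposal is correct and takes essentially the same approach as the paper: both parts (1) and (2) are dismissed quickly, and part (3) is handled by showing, via the projection $\pi_P$, that a piece $P$ is met non-trivially by a geodesic from $x$ to $y$ exactly when $\pi_P(x)\neq\pi_P(y)$, with entry point $\pi_P(x)$ and exit point $\pi_P(y)$. Your characterization packages the two contradiction arguments in the paper's proof into a single clean statement, but the underlying mechanism (Lemma~\ref{propproj:lem}(1), Corollary~\ref{diffproj:cor}, Corollary~\ref{emptypointsubgeod:cor}) is identical.
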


\begin{defn}
$\Gamma$ as above will be called the P-geodesic induced by $\gamma$.

\end{defn}

\begin{proof}
Lemma~\ref{piecesubdense:lem} implies $(1)$, and $(2)$ is clear.
\par
In order to prove $(3)$, we will prove that if $\gamma,\gamma'$ are geodesics from $p$ to $q$ and $\gamma$ intersect the piece $P$ in a non trivial arc, entering it in $x$ and leaving from $y$, then $\gamma'$ enters $P$ in $x$ and leave it from $y$ as well.
\par
First of all, we have to prove that $\gamma'$ intersects $P$. If this is not the case, then $(\gamma\cup \gamma')\backslash P$ is connected. But the projection of $\gamma\backslash P$ on $P$ consists of 2 points, and the projection of $\gamma'$ on $P$ consists of one point, as $\gamma'\cap P=\emptyset$. Therefore the projection of $(\gamma\cup\gamma')\backslash P$ on $P$ is not connected, a contradiction.
\par
Suppose now that $\gamma'$ enters $P$ in $x'\neq x$. Let $\overline{\gamma}$ (resp. $\overline{\gamma'}$) be the initial subgeodesic of $\gamma$ (resp. $\gamma'$) whose final point is $x$ (resp. $x'$). The projection of $\overline{\gamma}$ on $P$ is $x$ and the projection of $\gamma'$ on $P$ is $x'$. But $\overline{\gamma}\cap\overline{\gamma'}$ contains $p$, and therefore their projections on $P$ cannot be disjoint, a contradiction. One can proceed similarly for $y,y'$, considering final subgeodesics instead of initial subgeodesics.
\end{proof}

From now until the end of the subsection, fix a family $\{(P_i,r_i)\}_{i\in I}$ of homogeneous geodesic complete pointed metric spaces. Throughout the subsection all P-geodesics are implied to have range $\{(P_i,r_i)\}$.
\par
If $\calI$ is a family of subintervals of $[0,l]$ we set, for $x>0$, $\calI [x]=\{J\in\calI|J\subseteq [0,x]  \}$.

\begin{defn}
We will say that the P-geodesics $\Gamma$ and $\Gamma'$ with associated almost fillings, respectively, $\calI_\Gamma$ and $\calI_\Gamma'$ have the same P-pattern until $x>0$ if

\begin{enumerate}
\item
$\calI_\Gamma[x]=\calI_{\Gamma'}[x]$,
\item
$\Gamma(I)=\Gamma'(I)\ \forall I\in\calI_\Gamma[x]$,
\item
if there exists $J\in\calI_\Gamma$ such that $x\in J$ and $x$ is not the first point of $J$, then there exists $J'\in\calI_{\Gamma'}$ with the same property and $h_\Gamma(J)=h_{\Gamma'}(J')$.

\end{enumerate}

We will say that $\Gamma$ and $\Gamma'$ have the same initial P-pattern if there exists some $x>0$ such that $\Gamma$ and $\Gamma'$ have the same pattern until $x$.

\end{defn}

Clearly, having the same initial P-pattern is an equivalence relation on the set of P-geodesics. Denote by $\calW$ the quotient set.
\par
The property of having the same initial P-pattern is modeled on the following property for geodesics.

\begin{defn}
Let $\gamma$, $\gamma'$ be geodesics in $\F$ parametrized by arc length both starting from the same point $p$. We will say that $\gamma$ and $\gamma'$ have the same initial pattern if there exists $x>0$ and a piece $P$ such that $\gamma(x)$ and $\gamma'(x)$ both belong to $P$.

\end{defn}

\begin{lemma}
\begin{enumerate}
\item
Consider geodesics $\gamma$ and $\gamma'$ parametrized by arc length starting from $p$. If there exists a piece $P$ such that, for some $x>0$, $\gamma(x)$ and $\gamma'(x)$ both belong to $P$, then for each $0\leq y\leq x$ there exists a piece $P_y$ such that $\gamma(y),\gamma'(y)\in P_y$.
\item
Having the same initial pattern is an equivalence relation.
\end{enumerate}

\end{lemma}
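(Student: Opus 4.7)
My plan for part (1) is to split $[0,x]$ at $t := d(p, \pi_P(p))$. By the projection lemma every curve from $p$ to a point of $P$ passes through $\pi_P(p)$, so since $\gamma(x), \gamma'(x) \in P$ and both curves are parametrized by arc length, $\gamma(t) = \gamma'(t) = \pi_P(p)$. For $y \in [t, x]$ the subgeodesic $\gamma|_{[t,x]}$ joins two points of $P$ and so lies in $P$ by Corollary \ref{emptypointsubgeod:cor}; likewise for $\gamma'$, giving $P_y = P$. For $y \in [0, t]$, both $\gamma|_{[0,t]}$ and $\gamma'|_{[0,t]}$ are geodesics from $p$ to $\pi_P(p)$; if $\gamma(y) = \gamma'(y)$ I take $P_y$ to be any piece containing this common point (pieces cover $\F$). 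Otherwise I set $a = \sup\{s \leq y : \gamma(s) = \gamma'(s)\}$ and $b = \inf\{s \geq y : \gamma(s) = \gamma'(s)\}$ inside $[0,t]$; arc-length parametrization ensures $\gamma|_{(a,b)}$ and $\gamma'|_{(a,b)}$ are disjoint, so $\gamma|_{[a,b]} \cup \gamma'|_{[a,b]}$ is a simple loop, which by the simple loop lemma lies in a single piece $P_y$, and that piece contains $\gamma(y)$ and $\gamma'(y)$.

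For part (2) reflexivity and symmetry are immediate from the definition (for reflexivity, any $x > 0$ and any piece containing $\gamma(x)$ work), so the real work is transitivity, which I plan to reduce to part (1). Suppose $\gamma \sim \gamma'$ via $(x_1, P_1)$ and $\gamma' \sim \gamma''$ via $(x_2, P_2)$, with $x_1 \leq x_2$ WLOG, and set $t_i := d(p, \pi_{P_i}(p))$.

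If $t_1 > 0$ I take $y = t_1 \leq x_1 \leq x_2$: part (1) applied to $(\gamma', \gamma'')$ produces a piece $S$ with $\gamma'(y), \gamma''(y) \in S$, and since $\gamma(t_1) = \gamma'(t_1) = \pi_{P_1}(p) \in S$, we conclude $\gamma(y), \gamma''(y) \in S$.

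If $t_1 = 0$ then $p \in P_1$, hence $\gamma|_{[0, x_1]}, \gamma'|_{[0, x_1]} \subseteq P_1$. If also $t_2 = 0$, then $\gamma'(y) \in P_1 \cap P_2$ for every $y \in [0, x_1]$, so $(T_1)$ forces $P_1 = P_2$ and then $\gamma''|_{[0, x_2]} \subseteq P_1$. If instead $t_2 > 0$, I take $y := \min(x_1, t_2) > 0$ and apply part (1) to $(\gamma', \gamma'')$ to obtain a piece $S \ni \gamma'(y), \gamma''(y)$. When $\gamma'(y) = \gamma''(y)$ the common point already lies in $P_1$. When $\gamma'(y) \neq \gamma''(y)$, the sub-arc $\gamma'|_{[a,y]}$ produced in the loop step of part (1) is non-trivial and lies in $\gamma'|_{[0,x_1]} \cap S \subseteq P_1 \cap S$, so $(T_1)$ forces $P_1 = S$ and hence $\gamma''(y) \in P_1$. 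I expect this last sub-case to be the main obstacle, since a priori $P_1$ and $S$ might meet only at $\gamma'(y)$; it is precisely the loop construction from part (1) that exposes the additional sub-arc in $P_1 \cap S$ and collapses the two pieces.
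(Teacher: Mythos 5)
Your proof is correct, but it follows a noticeably different route from the paper's in both parts. For part (1) the paper splits at the entry time $x'$ into $P$ just as you do, but on $[0,x']$ it argues via point $(3)$ of Lemma~\ref{Pgeod:lem} (two geodesics with the same endpoints enter and leave each piece at the same points) together with the density of piece intervals (Lemma~\ref{piecesubdense:lem}), the latter relying on the subsection's standing reduction to trivial transversal trees. You instead run the ``sup of last agreement / inf of next agreement'' construction and invoke the simple-loop-in-a-piece lemma directly; this is self-contained, avoids both of those inputs, and in fact does not need the trivial-transversal-tree hypothesis at all. For part (2), the paper first uses part (1) to bring both witnessing times to a common $x$, splits off the trivial cases $\gamma(x)=\gamma'(x)$ or $\gamma'(x)=\gamma''(x)$, and then argues that otherwise $\gamma'|_{[0,x]}$ has non-trivial final subsegments inside both $P_1$ and $P_2$, forcing $P_1=P_2$ by $(T_1)$. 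You instead branch on $t_i=d(p,\pi_{P_i}(p))$ and, in the subtle case $t_1=0<t_2$ with $\gamma'(y)\neq\gamma''(y)$, recycle the overlap arc $\gamma'|_{[a,y]}$ exposed by the loop construction to land the same $(T_1)$ collision. Both part (2) arguments are essentially the same $(T_1)$ argument seen from different angles; your part (1) argument is the more genuinely distinct one, and arguably preferable since it is independent of the trivial-transversal-tree convention.
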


\begin{proof}
$(1)$ If $p\in P$, the claim follows from the fact that each piece is convex. If this is not the case there exists $x'$ such that $\gamma(x')=\gamma'(x')=\pi_P(p)$, and, for $x'\leq y\leq x$, $\gamma(y),\gamma'(y)\in P$. For $0\leq y\leq x'$, and $y$ contained in a non-trivial interval $I$ such that $\gamma(I)\subseteq P'$ for some piece $P'$, the claim follows from the proof of Lemma~\ref{Pgeod:lem}, point $(3)$, which shows that $\gamma'(I)\subseteq P'$ as well. Also, if $I=[t_1,t_2]$ is maximal with that property, $\gamma(t_i)=\gamma'(t_i)$. If $y$ is not contained in such an interval, then $\gamma(y)=\gamma'(y)$ because the union of maximal intervals as above is dense in $[0,x']$, and so $y$ is the limit of a sequence of endpoints of such intervals.
\par
$(2)$ Consider geodesics parametrized by arc length $\gamma,\gamma',\gamma''$ and $x,y>0$ such that $\gamma(x)$ and $\gamma'(x)$ (resp. $\gamma'(y)$ and $\gamma''(y)$) both belong to some piece $P_1$ (resp. $P_2$). By point $(1)$, we can assume $y=x$. If $\gamma(x)=\gamma'(x)$ or $\gamma'(x)=\gamma''(x)$, we are done. Assuming that this is not the case, we will prove that $P_1=P_2$. In fact, in this case it is easily seen that $\gamma'(x)\neq \pi_{P_1}(p),\pi_{P_2}(p)$, and therefore $\gamma'|_{[0,x]}$ contains non-trivial final subsegments contained in $P_1$ and $P_2$. So, $P_1\cap P_2$ contains more than one point and $P_1=P_2$, as required.
\end{proof}

The importance of this notion is due to the following lemma:

\begin{lemma}\label{diffpattconc:lem}
If $\gamma$ and $\gamma'$ are geodesic starting from $p$ which have different initial patterns then $\gamma^{-1}\gamma'$ is a geodesic.

\end{lemma}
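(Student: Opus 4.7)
The plan is to apply Lemma~\ref{concat:lem} to the two geodesics $\gamma^{-1}$ (viewed as ending at $p$) and $\gamma'$ (starting at $p$). The third hypothesis of that lemma ($\gamma_1 \cap \gamma_2 = \{p\}$ is replaced by a weaker condition on rays in the concatenation lemma as stated, but the argument boils down to verifying the same two things), so I need to check: (a) the images of $\gamma$ and $\gamma'$ meet only at $p$; (b) there is no piece containing both an initial subgeodesic of $\gamma$ and an initial subgeodesic of $\gamma'$.

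Condition (b) is immediate from the hypothesis: if some piece $P$ contained $\gamma([0,b])$ and $\gamma'([0,a])$ for some $a,b > 0$, then for any $x$ with $0 < x \leq \min(a,b)$ we would have $\gamma(x), \gamma'(x) \in P$, which is exactly the negation of ``different initial patterns''.

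Condition (a) is the main obstacle, and the key tool is Lemma~\ref{Pgeod:lem}(3). Suppose for contradiction that $\gamma(s) = \gamma'(t) = q$ for some $s,t > 0$; then since both geodesics are parametrised by arc length from $p$ we have $s = t = d(p,q)$, so $\gamma|_{[0,s]}$ and $\gamma'|_{[0,s]}$ are two geodesics from $p$ to $q$. The proof of Lemma~\ref{Pgeod:lem}(3) shows something stronger than the stated conclusion: whenever $\gamma|_{[0,s]}$ intersects a piece $P$ in a non-trivial arc with entry/exit parameters $q_a < q'_a$, the other geodesic $\gamma'|_{[0,s]}$ enters and leaves $P$ at the same parameters. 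Since the endpoints $\gamma'(q_a), \gamma'(q'_a)$ both lie in $P$, Corollary~\ref{emptypointsubgeod:cor} forces $\gamma'([q_a,q'_a]) \subseteq P$ as well. So on each piece interval of $\gamma|_{[0,s]}$ both geodesics actually lie in the same piece $P$.

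Now Lemma~\ref{piecesubdense:lem} tells us that the piece intervals of $\gamma|_{[0,s]}$ have dense union in $[0,s]$; in particular some piece interval $[q_a, q'_a)$ meets $(0,s)$. Picking any $x$ in this intersection yields $\gamma(x), \gamma'(x) \in P$ with $x > 0$, contradicting the different initial patterns hypothesis. Thus no such $(s,t)$ exists, establishing (a). With both hypotheses of Lemma~\ref{concat:lem} verified, the concatenation $\gamma^{-1}\gamma'$ is a geodesic, as claimed.
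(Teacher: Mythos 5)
Your proof is correct and follows the route the paper leaves implicit: the paper states only that ``$\gamma^{-1}$ and $\gamma'$ concatenate well,'' i.e.\ satisfy the hypotheses of Lemma~\ref{concat:lem}, and you supply the verification. Your check of condition (b) is exactly right.

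One remark on economy: your argument for condition (a) is far heavier than it needs to be. You invoke the proof of Lemma~\ref{Pgeod:lem}(3) together with Lemma~\ref{piecesubdense:lem} to manufacture, from a hypothetical common point $q=\gamma(s)=\gamma'(s)$ with $s>0$, a piece interval inside $(0,s)$. But by the definition of a tree-graded space the pieces \emph{cover} $\F$, so $q$ itself already lies in some piece $P$; since $\gamma$ and $\gamma'$ are parametrised by arc length from $p$, both are at $q$ at the same parameter $s=d(p,q)>0$, so $\gamma(s),\gamma'(s)\in P$ and $\gamma,\gamma'$ have the same initial pattern by definition. No appeal to density of piece intervals or to the rigidity of piece entry/exit points is required. (Also a small slip: $\gamma_1\cap\gamma_2=\{p\}$ is hypothesis $(2)$ of Lemma~\ref{concat:lem}, not the third, though this does not affect the argument.)
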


\begin{proof}
It is clear that $\gamma^{-1}$ and $\gamma'$ concatenate well.
\end{proof}

\begin{lemma}
If $\gamma, \gamma'$ have the same initial pattern, then the induced P-geodesics $\Gamma$ and $\Gamma'$ have the same initial P-pattern.

\end{lemma}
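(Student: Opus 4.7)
My plan is to verify directly the three defining conditions of ``same initial P-pattern.'' Let $x_0>0$ and let $P$ be a piece with $\gamma(x_0),\gamma'(x_0)\in P$; I will exhibit some $x\in(0,x_0]$ until which $\Gamma$ and $\Gamma'$ have the same P-pattern. The essential input is the proof of point (1) of the preceding lemma, which in fact establishes the following stronger statement on $[0,x_0]$: the maximal closed subintervals $I$ on which $\gamma$ lies in a single piece $P'$ coincide with the corresponding subintervals for $\gamma'$, both geodesics lie in the same piece $P'$ on each such $I$ and agree at the endpoints of $I$, and off these subintervals one has $\gamma=\gamma'$.

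First, I observe that conditions (1) and (2) are automatic for every $x\in(0,x_0]$. For (1), the coincidence of maximal single-piece subintervals immediately gives $\calI_\Gamma[x]=\calI_{\Gamma'}[x]$. For (2), on each common piece interval $I_a=[q_a,q'_a)$ both geodesics enter the shared piece $P_a$ at $\gamma(q_a)=\gamma'(q_a)$ and exit at $\gamma(q'_a)=\gamma'(q'_a)$; since the chart is determined solely by the pair (piece, entry point), the identification with $(P_{h},r_h)$ is the same for both, giving $h_\Gamma(I_a)=h_{\Gamma'}(I_a)$ and $\Gamma(I_a)=\Gamma'(I_a)$.

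The hard part will be condition (3), which is nontrivial precisely when $x$ is interior to a piece interval $J=[q,q')$ of $\gamma$. Tracing through the preceding lemma's proof shows that in this situation $J$ must coincide with the maximal interval on which $\gamma$ sits inside $P$ itself, namely $J=[x',r_\gamma)$, where $x'$ is characterised by $\gamma(x')=\pi_P(p)$ (with $x'=0$ if $p\in P$). The same lemma gives $\gamma'([x',x_0])\subseteq P$, so $\gamma'$ has a piece interval $J'=[x',r_{\gamma'})$ with $r_{\gamma'}\ge x_0$. If $r_{\gamma'}>x_0$, the choice $x=x_0$ already verifies (3): $x$ is then interior to both $J$ and $J'$, and the chart is determined by the shared piece $P$ and common entry point $\gamma(x')=\gamma'(x')$, forcing $h_\Gamma(J)=h_{\Gamma'}(J')$. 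In the degenerate case $r_{\gamma'}=x_0$, I shift to $x=x_0-\epsilon$ for sufficiently small $\epsilon>0$: then $x$ is interior to both $J$ and $J'$, conditions (1)--(2) continue to hold a fortiori on $[0,x]\subseteq [0,x_0]$, and the chart-matching argument applies as before. This completes the verification.
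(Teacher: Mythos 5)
Your proposal follows essentially the same route as the paper: both rest on the refined content of the preceding lemma's proof (matching piece intervals, values, and endpoints up to the time $x'$ at which both $\gamma$ and $\gamma'$ reach $\pi_P(p)$, with both geodesics inside $P$ on $[x',x_0]$), and then choose an $x$ for which conditions (1)--(3) can be read off. The paper avoids your case split by taking $x=x'$ in the $p\notin P$ case (there $(3)$ becomes vacuous) or $x_0/2$ when $p\in P$, whereas you push $x$ close to $x_0$ and check $(3)$ directly; these are equivalent. One small slip: you split only on whether $r_{\gamma'}=x_0$, but if instead $r_\gamma=x_0<r_{\gamma'}$, then taking $x=x_0$ already breaks condition $(1)$, since $[x',x_0)\in\calI_\Gamma[x_0]$ has no counterpart in $\calI_{\Gamma'}[x_0]$ (and your opening claim that (1)--(2) hold for \emph{every} $x\in(0,x_0]$ fails there); choosing any $x\in(x',x_0)$ unconditionally, as you do in the degenerate subcase, removes the problem.
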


\begin{proof}
Let $x$ and $P$ be as in the definition of having the same initial pattern. If $p$ is contained in $P$, then $P$ contains the starting and ending point of the geodesics $\gamma|_{[0,x]}$, $\gamma'|_{[0,x]}$ and therefore they are contained in $P$. In this case $\calI_\Gamma[x/2]=\calI_\Gamma[x/2]=\emptyset$ and $h_\Gamma(J)=h_{\Gamma'}(J')=i$, where $J, J'$ are maximal closed-open intervals such that $\gamma(J),\gamma'(J')$ are contained in $P$ and $i$ is chosen in such a way that $P$ is isometric to $P_i$.
\par
If $p\notin P$, then both $\gamma$ and $\gamma'$ must pass through the projection $\gamma(y)$ of $p$ on $P$ (and $y>0$). It is easy to prove (see the proof of point $(3)$ of Lemma~\ref{Pgeod:lem}) that $\Gamma$ and $\Gamma'$ have the same pattern until $y$.
\end{proof}

Denote by $\calY(\F,p)$ the quotient of the set of geodesics starting from $p$ by the equivalence relation of having the same initial pattern. The above lemma tells us that there is a well defined map $F_{\F,p}:\calY(\F,p)\to \calW$ (recall that $\calW$ is the set of equivalence classes of P-geodesics with the same initial P-pattern).

\begin{conv}\label{F^-1:conv}
Suppose that $\G$ is a tree-graded space satisfying the same requirements as $\F$, with a fixed choice of charts, and $p\in\G$. We will set, for each $w\in\calW$, $H_{\G,p}(w)=|F^{-1}(w)|$.
\end{conv}

\subsection{Counting geodesics}
Now we will analyze asymptotic cones of relatively hyperbolic groups and minimal tree-graded structures. In each asymptotic cone $X$ of a group $G$ relatively hyperbolic with respect to its subgroups $H_1,\dots,H_n$ (infinite and with infinite index in $G$), the \emph{normalized} tree-graded structure is the set of pieces containing the following:
\begin{itemize}
\item
the subsets of $X$ induced by a left $\st$coset of some $\st H_i$ which are not real trees (notice that if it is not empty it covers $X$, and if it is empty $X$ is a real tree),
\item
if the collection $\calH$ described above is a set of pieces, the transversal trees with respect to $\calH$, and $X$ otherwise.
\end{itemize}
The pieces as in the second point will be referred to, with an abuse, as transversal trees. If $\calH$ is a set of pieces, by Proposition~\ref{valency:prop} they are homogeneous real trees of valency $2^{\aleph_0}$. On the other hand, if $\calH$ is not a set of pieces, the valency of $X$ is once again $2^{\aleph_0}$. In fact, the set $P$ induced by any $H_i$ is not a point, as each $H_i$ is infinite, and $P$ belongs to a set of pieces for $X$. Hence, being a homogeneous real tree, it contains a geodesic line. So, applying Lemma~\ref{cardpieces:lem}, we easily obtain that $X$ has valency at least $2^{\aleph_0}$, and hence exactly $2^{\aleph_0}$.
\par
Let $G$ be a group hyperbolic relative to $H_1,\dots,H_n$. Let $\calP=\{(P_i,r_i)\}_{i=0,\dots,k}$ be representatives for the isometry classes of the pieces, where $P_0$ is a homogeneous real tree with valency $2^{\aleph_0}$.
We will denote by $w_t$ the class in $\calW$ of a P-geodesic $\Gamma$ with associated almost filling of $[0,1]$ simply $\{[0,1)\}$ and such that $\Gamma(0)\in P_0$.
\par

As we will see, the P-geodesics defined below are the ones represented by actual geodesics.

\begin{defn}
A P-geodesic $\Gamma$ is admissible if for each $I_1=[p_1,q_1),I_2=[p_2,q_2)$ in its associated almost filling and such that $q_1=p_2$, $\Gamma(p_1)\notin P_0$ or $\Gamma(p_2)\notin P_0$.
\end{defn}

\par
Given a non-virtually cyclic group $G$ with cut-points in an asymptotic cone $X$, we can still consider a family $\calP=\{(P_i,r_i)\}$ of representatives for the isometry classes of the pieces in the minimal tree-graded structure, where $P_0$ is a homogeneous real tree with valency $2^{\aleph_0}$. The class $w_t$ and the definition of admissible P-geodesic make sense in this case as well.

\begin{prop}\label{F-1relhyp:prop}
If $\G$ is either
\begin{itemize}
 \item an asymptotic cone of a group $G$ hyperbolic relative to $H_1,\dots,H_n$ equipped with the normalized tree-graded structure, or
 \item an asymptotic cone of a non-virtually cyclic group $G$ containing cut-points equipped with the minimal tree-graded structure,
\end{itemize}
then for each $p\in\G$ there exists a choice of charts such that
\begin{itemize}
\item
$H_{\G,p}(w_t)=1$,
\item
$H_{\G,p}(w)=2^{\aleph_0}$ if $w_t\neq w\in\calW$ and $w$ has an admissible representative,
\item
$H_{\G,p}(w)=0$ otherwise,

\end{itemize}
where $H_{\G,p}$ is defined in Convention~\ref{F^-1:conv}.
\end{prop}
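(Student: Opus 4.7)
The strategy is to handle the three bullets separately, relying on (i) the description of the equivalence class $w_t$ as consisting of P-geodesics whose first piece-interval has type $P_0$, (ii) Lemma~\ref{cardpieces:lem} in the relatively hyperbolic case or an overspill argument in the style of Proposition~\ref{transvmin:prop} in the minimal case to produce $2^{\aleph_0}$ pieces of any non-transversal isometry type through $p$, and (iii) the pairwise disjointness of distinct transversal trees in the enlarged tree-graded structure. I would fix an arbitrary choice of charts; the counting works uniformly for any such choice.

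For $H_{\G,p}(w_t)=1$: unwinding the definition of $\calW$-equivalence, $\Gamma\in w_t$ iff its first piece-interval has type $P_0$, so a geodesic $\gamma$ from $p$ lies in $F^{-1}(w_t)$ iff it starts inside a transversal tree piece through $p$, which by disjointness must be $T_p$. All such $\gamma$ share $T_p$ as an initial piece and hence define the same element of $\calY(\G,p)$.

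For $H_{\G,p}(w)=0$ when $w$ has no admissible representative: this hypothesis forces the initial P-pattern of $w$ to contain two consecutive $P_0$-type piece-intervals. A geodesic realization would then have to traverse two consecutive distinct transversal tree pieces sharing the junction point, contradicting their pairwise disjointness.

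For $H_{\G,p}(w)=2^{\aleph_0}$ when $w\neq w_t$ is admissible: the bound $\le 2^{\aleph_0}$ follows from $|\calY(\G,p)|\le 2^{\aleph_0}$. For the reverse bound, since $w\neq w_t$ the first piece-interval of $w$ has some type $P_j$ with $j\neq 0$, and there are $2^{\aleph_0}$ pieces of type $P_j$ through $p$. For each such piece $P$ I would build a geodesic realizing $w$ by inductive concatenation: the first segment runs from $p$ to the chart-preimage of $\Gamma(I_1)$ inside $P$, and at every subsequent junction one picks a piece of the prescribed type through the current endpoint, distinct from the previous piece. Admissibility is exactly the condition needed for this to be possible at every junction (it rules out having to pick two consecutive distinct transversal trees, which is impossible), while Lemma~\ref{concat:lem} guarantees each concatenation remains a geodesic; completeness of $\G$ lets the at-most-countable concatenation converge. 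Varying only the initial piece $P$ gives $2^{\aleph_0}$ geodesics with pairwise distinct initial pieces, and hence pairwise distinct initial patterns in $\calY(\G,p)$ by $(T_1)$, producing $2^{\aleph_0}$ classes in $F^{-1}(w)$.

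The main obstacle is the inductive construction itself: one must verify hypothesis (3) of Lemma~\ref{concat:lem} at every junction, handle the junctions where the required isometry type coincides with the previous one (using the $2^{\aleph_0}$-abundance to pick a different representative), and control the limit of the countable concatenation so that the resulting path is globally a geodesic whose induced P-pattern agrees with $w$ on a positive-length initial segment.
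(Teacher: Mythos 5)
The first and third bullets are handled essentially as in the paper: $w_t$ corresponds to geodesics whose initial segment lies in the transversal-tree piece through $p$, of which there is exactly one, and a class with no admissible representative forces any realizing geodesic to cross two distinct transversal trees through a common point, which is impossible.

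The gap is in the $2^{\aleph_0}$ case, specifically in the inductive concatenation. You tacitly assume the almost filling $\{I_a\}$ has a first interval $I_1$ and a well-defined ``next'' interval at every junction; but an almost filling is merely a disjoint, dense countable family of closed--open intervals, and the intervals can accumulate at $0$. For example $\{[1/(n+1),1/n)\}_{n\geq 1}$ with types alternating $P_0,P_j,P_0,\dots$ is admissible, and for that $w$ the set $\calI_\Gamma[x]$ is infinite for every $x>0$, so realizing even the germ at $p$ requires crossing infinitely many pieces and there is no first segment for your induction to begin with. This is exactly the ``fractal geodesic'' phenomenon the paper flags in its discussion of geodesics in tree-graded spaces (the reference to Lemma~6.11 of~\cite{DS1}); your remark about ``controlling the limit of the countable concatenation'' understates the problem, which is not convergence at the supremum of a chain but the absence of any chain structure. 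The paper's proof avoids this by passing to $\st G$: for any \emph{finite} subfamily $A'\subseteq A$ it builds a finite approximant -- an internal geodesic realizing the constraints for the intervals in $A'$, via concatenation and Lemma~\ref{manypieces:lem}, much as you propose -- and then applies saturation to produce a single internal geodesic whose projection has the full, possibly densely ordered, initial pattern. With that geodesic in hand, the $2^{\aleph_0}$ count is then obtained not by varying a nonexistent ``initial piece'' but by translating $\hat{\g}$ by $g^n$ for $n$ in the $2^{\aleph_0}$-sized set $\{n\in\st\N: d(e,g^n)\in o(\nu)\}$ in the normalized case, and by group elements $g_i$ obtained via an overspill argument (together with an extra internal condition ensuring the translates concatenate well) in the minimal case. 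Without the saturation step your argument only covers the admissible $w$ whose almost fillings have a leftmost interval, which is a strict subclass.
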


\begin{proof}

Suppose (without loss of generality) that $\G=C(G,e,\nu)$ and that $p=e$. In the case that $G$ is relatively hyperbolic, let $J\subseteq\{1,\dots,n\}$ be the set of the indices $j$ such that $C(H_j,e,\nu)$ is not a real tree.
\par
Let us consider an admissible P-geodesic $\Gamma$, with associated almost filling of $[0,l]$ $\{I_a=[p_a,q_a)\}_{a\in A}$.
\par\smallskip
 \noindent {\bf Step 1. ``Finite approximants'' for $\Gamma$.}
Consider some finite subset $A'\subseteq A$. We want to find an internal non-empty set $\calG=\calG(A')$ of internal geodesics $\hat{\gamma}$ such that their projections $\gamma$ satisfy the properties required by $\Gamma$ for $\{I_a\}_{a\in A'}$. Let us make this more precise. Choose for each $i$ an identification of $P_i$ with a piece $Q_i$ containing $e$. For each $a\in A$ choose $u_a\in\G$ such that $e$ and $u_a$ lie on $Q_{h_\Gamma(I_a)}$, where $h_\Gamma$ is as usual the index selector.
\par
We require for the internal geodesics $\hat{\gamma}\in\calG$ to satisfy the following, for each $a\in A'$
\begin{enumerate}
\item
$\hat{\gamma}(0)=e$,
\item
for each $a\in A'$ such that $h_\Gamma(I_a)\geq 1$ there exists $g=g_a\in\st G$ such that $\gamma(p_a)=[g_a]$, $\gamma(q_a)=g_a u_a$,
\item
for each $a\in A'$ such that $h_\Gamma(I_a)=0$ we have that $\gamma|_{I_a}$ is contained in a transversal tree.
\end{enumerate}
Suppose that we are able to prove that there actually exist internal geodesics with these properties for each finite $A'\subseteq A$, as we will do later. The idea is to use saturation to find an internal geodesic $\hat{\gamma}$ which satisfies $(1)-(3)$ for each $a\in A$. But first, we have ``express those properties internally'': we need to find non-empty \emph{internal} sets $\calG(A')$, to apply saturation and find that the intersection is non-empty. We will do this separately for relatively hyperbolic groups and minimal tree-graded structures, starting with the relatively hyperbolic case.
\par\smallskip
 \noindent {\bf Normalized structure case.}
Property $(1)$ requires no comments. Property $(2)$ holds if and only if there exists an infinitesimal $\rho$ such that
$$d(\hat{\gamma}(p_a\nu),g_a),d(\hat{\gamma}(p_a\nu),g_a\hat{u}_a)\leq \rho\nu,$$
where $[\hat{u}_a]=u_a$. So, fixing $\rho$, we have an internal set of internal geodesics satisfying $(2)$: the one described by the property above (we can consider a fixed $\hat{u}_a$).
\par
The task is slightly more difficult for condition $(3)$. Let $M$ be as in Lemma~\ref{M:lem}.
If the projection of $\gamma$ is contained in a transversal tree, using the property of $M$ and an argument based on Lemma~\ref{underspill:lem}, we get that there exists an infinitesimal $\eta$ such that for each left $\st$coset $H$ of some $H_j$ ($j\in J$), the diameter of $\gamma\cap N_{M}(H)$ is bounded by $\eta\nu$. We claim that the converse holds as well.
\par
In fact, consider $\gamma$ such that there are two points $\gamma(x)$ and $\gamma(y)$, with $x<y$, such that $d(\gamma(x),\gamma(y))\equiv \nu$, but there exists a left $\st$coset $H$ of some $H_j$ such that $d(x,H),d(y,H)\leq \eta'\nu$, for some infinitesimal $\eta'$. By the property of $M$, there are points $\gamma(x'),\gamma(y')$ whose distance from $H$ is at most $M$. What is more, we can assume that $d(\gamma(x'),\gamma(y'))\equiv \nu$ by taking the $x'$ as close as possible to $x$ and $y'$ as close as possible to $y$. This is a contradiction since any arc between $[x']$ and $[y']$ is contained in $P$. In particular this applies to a non-trivial subgeodesic of the geodesic $\delta$ induced by $\gamma$, and therefore $\delta$ is not contained in a transversal tree. This completes the proof of the claim.
\par\smallskip
 \noindent {\bf Minimal structure case.}
Let us move on to minimal tree-graded structures. For what regards property $(2)$, we can proceed as above. The idea for property $(3)$ is the characterization of transversal lines as those admitting a projection satisfying $(P2)$ as in the proof of Proposition~\ref{transvmin:prop}. Namely, $\gamma|_{I_a}$ is contained in a transversal tree if and only if there exists an infinitesimal $\rho$ such that $\phi(\gamma|_{[p_a\nu,q_a\nu]}, 1/\rho)$, as defined in the proof of Proposition~\ref{transvmin:prop}, holds.
\par\smallskip
 \noindent {\bf Step 2. Applying saturation.}
In both cases we have that, for each $A'$ and fixing a sufficiently large infinitesimal $\rho(A')$, we can find a non-empty (as we will show later) internal set of internal geodesics $\calG(A')$ satisfying $(1)-(3)$. Therefore, after choosing an infinitesimal greater than any $\rho(A')$ (see Lemma~\ref{cof:lem}), we can use saturation to find an internal geodesic $\hat{\gamma}$ which satisfies $(1)-(3)$ for each $a\in A$. Let $\gamma$ be the induced geodesic in $\G$. It is quite clear that we can choose identifications of each pair $(P,p)$ with some $(P_i,r_i)$, where $P$ is a piece intersecting $\gamma$ in a non-trivial subgeodesic and $p$ is the entrance point of $\gamma$ in $P$, in such a way that the P-geodesic associated to $\gamma$ is $\Gamma$. In fact, in the case when $(P,p)$ has to be identified with $(P_i,r_i)$ for $i\geq 1$, we can use $g_a$ as in property $(2)$ to ``translate'' the fixed identification of $(P_i,r_i)$ with $Q_i$. In the case when $(P,p)$ has to be identified with $(P_0,r_0)$, we can use the isotropy of $P_0$, that is the fact that for each $x,y\in P_0$ with $d(x,r_0)=d(y,r_0)$ there exists an isometry of $P_0$ fixing $r_0$ and taking $x$ to $y$. Notice that we implicitly used the fact that $\bigcup I_a$ is dense in $[0,l]$ to guarantee that the $I_a$'s are exactly the maximal intervals such that $\gamma|_{I_a}$ is contained in a piece.
\par\smallskip
 \noindent {\bf Step 3. Constructing many geodesics starting from a given one.}
So far we proved that $H_{\G,e}([\Gamma])\geq 1$ (for an admissible $\Gamma$). We want to use $\hat{\g},\gamma$ as in Step 2 to construct many other geodesics with the same properties. Again, we will start with the relatively hyperbolic case and then move on to the minimal tree-graded structure case.
\par\smallskip
 \noindent {\bf Normalized structure case.}
Consider a hyperbolic element $g\in G$. Note that the isometry induced in $\G$ by left multiplication by $g$ stabilizes no piece which is not a transversal tree. This immediately implies that, unless an initial subgeodesic of $\gamma$ is contained in the transversal tree at $e$, $\gamma$ and $g\gamma$ do not have the same initial pattern. Similarly, if $n_1\neq n_2\in\st\N$ (and $d(e,g^{n_i})\in o(\nu)$), $g^{n_1}\gamma$ and $g^{n_2}\gamma$ do not have the same initial pattern. Note that the cardinality of $\{n\in\st\N: d(e,g^{n})\in o(\nu)\}$ is at least $2^{\aleph_0}$. Therefore, if $[\Gamma]\neq w_t$ and $\Gamma$ is admissible, we have $H_{\G,e}(w)\geq2^{\aleph_0}$.
The other inequality clearly holds, so we are done in this case. On the other hand, it is clear from the definitions that $H_{\G,e}(w_t)=1$, and that no geodesic in $\G$ has non-admissible associated P-geodesic, that is $H_{\G,e}(w)=0$ if $w$ has no admissible representatives.
\par\smallskip
 \noindent {\bf Minimal structure case.}
In the minimal tree-graded structure case, we proceed approximating the argument above. Namely, consider an element $g\in\st G$ such that $[g]\neq e$ but $[g]\in T_e$. Notice that if $[\Gamma]\neq w_t$ then $d(gp,p)=2l(\gamma)+d(e,[g])$, where $p$ is the second endpoint of $\gamma$. Similarly, if $[g_1]\neq [g_2]$ and $[g_1],[g_2]\in T_e$, we have an analogous property for $g_1 p, g_2 p$. Using this, it is easily seen that for each $n\in \N^+$ we can find $g_1,\dots,g_n$ such that
\begin{enumerate}
 \item $d(g_i q,g_j q)\geq 2l(\gamma)\nu$ for $i\neq j$, where $q$ is such that $[q]=p$,
 \item $d(g_i, e)\leq 1/n$ for each $i$.
\end{enumerate}
Therefore, we can also find $g_1,\dots, g_\mu$ with the same properties for some infinite $\mu$. Unfortunately, this is not enough to conclude that $g_i \gamma$ concatenates well with $\gamma_j$ for each $i\neq j$, which would lead to the end of the proof (up to the final considerations as in the relatively hyperbolic case). However, this still holds because we can add the following requirement:
\par
$\ \ (3)$ each path in $\st G$ obtained concatenating at most $n$ internal geodesics connecting a point on $g_i \hat{\g}$ to a point in $g_j \hat{\g}$ contains a point whose distance from $g_i$ is a most $\nu/n$, if $i\neq j$.
\par
If $g_1,\dots, g_\mu$ also satisfies this property, it is easily seen that all paths connecting $g_i \gamma$ to $g_j \gamma$ for $i\neq j$ contain $e$. In particular, no piece (except transversal trees) can contain initial subpaths of both $g_i \gamma$ and $g_j \gamma$, as it does not contain cut-points. So, $g_i \gamma$ and $g_j \gamma$  concatenate well for $i\neq j$, and we are done.
\par\smallskip
 \noindent {\bf Step 4. Actually constructing the ``finite approximants''.}
We are only left with finding internal geodesics as above. For each $a\in A'$ we can find a geodesic parametrized by (a translate of the) arc length $\gamma_a:[p_a,q_a]\to\G$ which is contained in a piece isometric to $P_{h_\Gamma(a)}$, $\gamma_a(p_a)=e$ and $\gamma_a(q_a)=u_a$. Order $A'$ in such a way that $a\leq b$ if $p_a\leq p_b$. We want to show that if $a<b$, up to translating $\gamma_b$ by an element of $\st G$, we can find a geodesic parametrized by a translate of the arc length $\gamma:[p_a,q_b]\to\G$ such that $\gamma|_{[p_a,q_a]}=\gamma_a$ and $\gamma_{[p_b,q_b]}=\gamma_b$.
In fact, suppose first that $q_a<p_b$. It is easy to find a geodesic $\delta$ of length $q_b-q_a$ starting from $\gamma_a(q_a)$ such that $\gamma_a$ and $\delta$ concatenate well. Also, there exists an element $g\in\st G$ such that $g\gamma_b$ has starting point the final point of $\delta$. Up to changing $g$ we can also arrange that $\delta$ and $\gamma_b$ concatenate well (by Lemma~\ref{manypieces:lem}). The concatenation of $\gamma_a$, $\delta$ and $g\gamma_b$ is the required geodesic. If $q_a=p_b$, we can still find $g$ such that $\gamma_a$ and $\gamma_b$ concatenate well by Lemma~\ref{manypieces:lem}, unless they are both contained in a transversal tree, but this is not the case as $\Gamma$ is admissible.
\par
Using inductively the argument above (and, possibly, the first part of it for the minimum and maximum of $A'$), we obtain a geodesic $\gamma$ such that $\gamma|_{I_a}$ is contained in a piece isometric to $P_{h_\Gamma(a)}$ for each $a\in A'$. An internal geodesic connecting $e$ to an element of $\st G$ which projects on the last point of $\gamma$ satisfies all our requirements.
\end{proof}

The aim of the next subsections is to prove that the kind of information provided by the proposition above is enough to determine the bilipschitz type of $\G$.

\subsection{A criterion for being bilipschitz}
\par
Fix, throughout the subsection, a family $\{(P_i,r_i)\}_{i\in I}$ of complete homogeneous geodesic pointed metric spaces. We also assume that $P_i$ is not isometric to $P_j$ if $i\neq j$ and that no $P_i$ consists of a single point.
\begin{conv}
All tree-graded spaces from now on are assumed to be homogeneous and to have homogeneous pieces.
\end{conv}

\par
Recall that by $\calW$ we denote the set of equivalence classes of P-geodesics (with range $\{(P_i,r_i)\}$) with the same initial P-pattern, and consider a map $\alpha$ assigning to each $w\in\calW$ a cardinality $\alpha(w)$. Denote by $\F_\alpha$ a tree-graded space such that for each $p\in\F$ there exists a choice of charts such that $H_{\F_\alpha,p}(w)=\alpha(w)$ for each $w\in\calW$, if such $\F_\alpha$ exists (the question whether or not such $\F_\alpha$ exists will be addressed later).
\par
We will be interested in tree-graded spaces with pieces not necessarily isometric (but bilipschitz equivalent) to the $P_i$'s.
Suppose that we are given a family of homogeneous, geodesic complete metric space $\{Q_j\}_{j\in J}$ and bilipschitz equivalences $f_j:Q_j\to P_{i(j)}$. Suppose that $\{i(j)\}_{j\in J}=I$. Denote by $\calX$ the the set of equivalence classes of P-geodesics with range $\{(Q_j,s_j)\}$ (for $s_j\in Q_j$ such that $f_j(s_j)=r_{i(j)}$) with the same initial P-pattern. Set $\calF=\{f_j\}_{j\in J}$.
\par
Starting from the data above we are going to construct a map $\psi_\calF:\calX\to\calW$, which describes how P-geodesics change when ``substituting the $Q_j$'s with the $P_i$'s''.
\par
Indeed, we are going to construct first a map $\psi'_\calF$ from the set of P-geodesics with range $\{(Q_j, s_j)\}$ to the set of P-geodesics with range $\{(P_i,r_i)\}$, which induce the required map $\psi_\calF$.
\par
Consider a P-geodesic $\Gamma$ with range $\{(Q_j,s_j)\}$ with associated almost filling of $[0,l]$ $\{I_a=[p_a,q_a)\}_{a\in A}$.
\par
First of all, let us define the almost filling $\{J_a\}_{a\in A}$ associated to $\Theta=\psi'_\calF(\Gamma)$. We will need a function $s_\Gamma:[0,l]\to\R$, defined in such a way that it keeps track of the ``stretching factor'' given by the $f_j$'s.
\par
For $a\in A$ set $A_a=\{b\in A| q_b\leq p_a\}$. If $t\in [0,l]\backslash \bigcup_{a\in A}\mathring{ I_a}$, set
$$s_\Gamma(t)= \sum_{b\in A_a} d(r_{i(h_\Gamma(b))},f_{h_\Gamma(b)}(\Gamma(I_b)))+\lambda\left([0,p_a]\backslash\bigcup_{b\in A_a} I_b\right),$$
where $\lambda$ is the Lebesgue measure and $h_\Gamma$ is, as usual, the index selector.
If $t\in I_a$ set
$$s_\Gamma(t)= s_\Gamma(p_a)+\frac{t-p_a}{q_a-p_a}d(r_{i(h_\Gamma(a))},f_{h_\Gamma(a)}(\Gamma(I_a))).$$

\begin{rem}\label{kbilip}
It is not difficult to prove that $s_\Gamma$ is $k-$bilipschitz.
\end{rem}

We are ready to define
$$J_a=s_\Gamma(I_a).$$
Now, simply set $\Theta(J_a)=f_{h_\Gamma(a)}(\Gamma(I_a))$. It is easily shown that $\{J_a\}$ is an almost filling and that $\Theta$ is a P-geodesic whose initial pattern does not depend on the choice of the representative of $[\Gamma]\in\calX$. In particular, $\psi'_\calF$ induce a well defined map $\psi_\calF:\calX\to\calW$.

\begin{rem}
$\psi_\calF$ is surjective (because $\{i(j)\}_{j\in J}=I$).

\end{rem}

\begin{conv}
With an abuse of notation, if $\gamma$ is a geodesic (and a choice of charts has been fixed) we will denote $\psi'_\calF(\Gamma)$ and $s_\Gamma$, where $\Gamma$ is the P-geodesic associated to $\gamma$, simply by $\psi'_\calF(\gamma)$ and $s_\gamma$, respectively.
\end{conv}

\begin{thm}\label{univmakesense:thm}
Suppose that $\alpha(w)$ is infinite for each $w\in\calW$. Also, suppose that $\F$ is tree-graded with trivial transversal trees and that each of its pieces is isometric to one of the $Q_j$'s as above, and that each $f_j$ is $k-$bilipschitz. Also, suppose that for each $p\in\F$ there exists a choice of charts such that:
\begin{enumerate}
\item
$\sum_{\{x'\in\calX:\psi_\calF(x')=\psi_\calF(x)\}} H_{\F,p}(x')\leq\alpha(\psi_\calF(x))$ for each $x\in \calX$.
\medskip

Then $\F$ admits a $k-$bilipschitz embedding into $\F_\alpha$.

\medskip

\item
$\sum_{\{x'\in\calX:\psi_\calF(x')=\psi_\calF(x)\}} H_{\F,p}(x')=\alpha(\psi_\calF(x))$ for each $x\in \calX$ and $p\in\F$.

\medskip

Then $\F$ is $k-$bilipschitz equivalent to $\F_\alpha$.

\end{enumerate}

\end{thm}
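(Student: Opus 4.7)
The plan is to construct $\Phi\colon\F\to\F_\alpha$ by choosing, for each piece $Q\subseteq\F$ (with $Q$ isometric to some $Q_j$), an image piece $\Phi_*(Q)\subseteq\F_\alpha$ (isometric to $P_{i(j)}$), and defining $\Phi|_Q$ as the composition of the two chart isometries with the $k$-bilipschitz map $f_j$. Fix basepoints $p\in\F$ (with charts at $p$ chosen to realize the hypothesis) and $p_\alpha\in\F_\alpha$ (with charts realizing $H_{\F_\alpha,p_\alpha}(w)=\alpha(w)$), and set $\Phi(p)=p_\alpha$. Since $\F$ has trivial transversal trees, any geodesic $\gamma$ from $p$ to $q\in\F$ decomposes densely into its maximal piece intervals $\{I_a\}$ by Lemma~\ref{piecesubdense:lem}, and by Lemma~\ref{Pgeod:lem}(3) the piece containing each $\gamma(I_a)$ depends only on $q$; thus once $\Phi_*$ is chosen on every piece $\gamma$ crosses, $\Phi(q)$ is determined by piecewise application of the charts and $f_j$.

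I would build $\Phi_*$ by transfinite induction, starting from pieces containing $p$ and extending outward along intersection points. The hypothesis
\begin{equation*}
\sum_{\{x'\in\calX:\psi_\calF(x')=\psi_\calF(x)\}}H_{\F,p}(x')\le\alpha(\psi_\calF(x))
\end{equation*}
says that the geodesic classes in $\calY(\F,p)$ whose P-pattern maps to a given $w=\psi_\calF(x)\in\calW$ inject into the $\alpha(w)=H_{\F_\alpha,p_\alpha}(w)$ geodesic classes in $\calY(\F_\alpha,p_\alpha)$ over $w$. Since transversal trees in $\F$ are trivial, classes in $\calY(\F,p)$ correspond bijectively to pieces through $p$, and this gives an injection of pieces-through-$p$ into pieces-through-$p_\alpha$ of the same isometry type. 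To extend across an intersection point $x\in Q\cap Q''$ already assigned via $Q$, I re-invoke the hypothesis at the new basepoint $x\in\F$ with image $\Phi(x)\in\F_\alpha$ (possible thanks to the ``for each $p\in\F$'' clause together with homogeneity). At every stage I maintain the coherence condition that if $Q_1\cap Q_2=\{x\}$ and both are assigned then $\Phi_*(Q_1)\cap\Phi_*(Q_2)=\{\Phi(x)\}$.

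For the bilipschitz estimate, take $q_1,q_2\in\F$ and a geodesic $\delta\colon[0,l]\to\F$ between them with piece intervals $\{I_a\}$. Each restriction $\Phi|_{\delta(I_a)}$ is $k$-bilipschitz, so $d(\Phi(\delta(p_a)),\Phi(\delta(q_a)))\in[(q_a-p_a)/k,\,k(q_a-p_a)]$. Coherence ensures that the images of distinct pieces crossed by $\delta$ are distinct pieces of $\F_\alpha$ meeting only at the images of the original intersection points, so the concatenation of the image segments satisfies the hypothesis of Lemma~\ref{concat:lem} and is a geodesic in $\F_\alpha$. Summing over $a$ and using density of $\bigcup I_a$ in $[0,l]$ gives $d(\Phi(q_1),\Phi(q_2))\in[l/k,\,kl]$. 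Injectivity of $\Phi$ follows from that of $\Phi_*$ and of each $f_j$; for part~(2), the equality hypothesis upgrades each injection to a bijection at every step, so the resulting $\Phi$ is surjective and hence a $k$-bilipschitz equivalence.

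The main obstacle is the bookkeeping in the transfinite induction: at each stage the matching must preserve coherence at all already-matched intersections and maintain global injectivity, while leaving enough unused image pieces for later stages. The assumption that $\alpha(w)$ is \emph{infinite} for every $w\in\calW$ is essential here, since after any set of previous choices of cardinality strictly less than $\alpha(w)$ there remain enough image pieces of type $w$ to continue. A subsidiary technical point is verifying that the image of a geodesic is genuinely a geodesic in $\F_\alpha$; this rests on the coherence condition, which ensures that two distinct adjacent pieces of $\F$ are sent to two distinct adjacent pieces of $\F_\alpha$ sharing the correct single intersection point, exactly so that Lemma~\ref{concat:lem} applies across every junction.
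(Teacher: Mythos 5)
Your high-level strategy --- build the map transfinitely, matching ``initial data'' at each stage via the cardinality hypothesis, and use surjectivity of the matching in case (2) --- is in the same spirit as the paper's Zorn's-lemma argument on good pairs $(M,h)$. But there is a genuine gap, and it originates in your claim that, since $\F$ has trivial transversal trees, ``classes in $\calY(\F,p)$ correspond bijectively to pieces through $p$.'' This is false: the paper explicitly warns (citing Lemma 6.11 of~\cite{DS1}) that there exist ``fractal'' geodesics whose piece intervals accumulate at the starting point, so that no single piece carries the germ of the geodesic at $p$. For such a geodesic $\gamma$, its class in $\calY(\F,p)$ is not determined by a piece through $p$; an infinite tail of piece-level data is relevant. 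This is precisely why the hypothesis is phrased in terms of $H_{\F,p}(x')$ for P-geodesic classes $x'\in\calX$ (which carry a whole almost filling) rather than in terms of a piece count. Your two sentences equating the two are doing real work and are not correct.

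The gap propagates in two directions. First, a transfinite piece-by-piece assignment $\Phi_*$ does not determine $\Phi$ on fractal geodesics, so your extension step is underspecified; one must extend by matching whole classes of $\calY(\F,p)\setminus\calY(M,p)$ to $\calY(\F_\alpha,h(p))\setminus\calY(h(M),h(p))$ at once, which is what the paper does via a bijection $b:R_1\to R_2$ compatible with $\psi_\calF$ on chosen sets of geodesic representatives, followed by an extension along their union. Second, your bilipschitz estimate via Lemma~\ref{concat:lem} does not go through: for a fractal geodesic $\delta:[0,l]\to\F$ the piece intervals $\{I_a\}$ are merely dense in $[0,l]$, not a finite (or even countable, consecutively ordered) partition, and their union can have Lebesgue measure strictly less than $l$, so there is no finite concatenation to which Lemma~\ref{concat:lem} applies across junctions; moreover the image of a geodesic subarc under a merely $k$-bilipschitz piece map is not in general a geodesic in $\F_\alpha$, so even on a single piece your claimed ``geodesic concatenation'' is not a geodesic. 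The paper resolves both issues with the explicit $k$-bilipschitz reparametrization $s_\gamma$ (stretching each piece interval by the local bilipschitz factor of $f_j$ and carrying the Lebesgue measure of the residual set through unchanged), sending $\gamma(t)\mapsto b(\gamma)(s_\gamma(t))$. Your sketch has no analogue of $s_\gamma$ and hence no control on the residual part of a fractal geodesic. To make your argument rigorous you would essentially have to import both the geodesic-class bijection and the $s_\gamma$ reparametrization, at which point it becomes the paper's proof.
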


\begin{rem}
Unfortunately, in the case we are interested in not all the cardinalities are infinite. However, modifying slightly the proof one can obtain Theorem~\ref{comparable:thm}. This would be a shorter way to prove that theorem than the one we will follow, that is reducing to the case when all cardinalities are infinite. We will do that to obtain an ``explicit'' description of the asymptotic cones of relatively hyperbolic group (as the universal tree-graded space described in the proof of Theorem~\ref{existence:thm}).

\end{rem}

\par

\textit{Proof of Theorem \ref{univmakesense:thm}.}\quad
We prove $(2)$, the proof of $(1)$ being very similar. Set $\G=\F_\alpha$. During the proof bilipschitz maps are implied to be $k-$bilipschitz.
\par
If $X\subseteq \F$ and $x\in X$ denote by $\calY(X,x)$ the set of elements of $\calY(\F,x)$ which can be represented by a geodesic contained in $X$. We will call a subspace $X$ of $\F$ \emph{good} if it has the following properties:
\begin{enumerate}
\item
$X$ is geodesic.
\item
For each $x\in X$, the set $\calY(X,x)$ either has at most 2 elements or it coincides with $\calY(\F,x)$. In the first case $x$ will be called empty for $X$, while in the second case it will be called full for $X$.
\item
If $X$ contains a non-trivial geodesic contained in one piece (or, equivalently, if it contains 2 points on the same piece), then it contains the entire piece.
\end{enumerate}
Analogous definitions can be given for $\G$. Note that an increasing union of good subspaces is a good subspace. Also, remark that if $X$ is a good subspace of $\F$ or $\G$ and $x,y\in X$, then \emph{any} geodesic between $x$ and $y$ is contained in $X$ (i.e., $X$ is convex). In fact, if $\gamma,\gamma'$ are geodesics connecting $x$ and $y$ and $p\in\gamma\backslash\gamma'$, there exists a piece containing $p$ and intersecting both $\gamma$ and $\gamma'$ in a non-trivial arc (as $p$ is contained in a simple loop which is a union of two subgeodesics of $\gamma$ and $\gamma'$). Therefore, conditions $(1)$ and $(3)$ imply the claim.
\par
We wish to construct the required bilipschitz equivalence using Zorn's Lemma on the set of \emph{good pairs}, that is pairs $(X,f)$ such that
\begin{itemize}
\item
$X$ is a good subspace of $\F$,
\item
$f$ is a bilipschitz embedding of $X$ into $\G$ which preserves fullness, that is $f(x)$ is full for $f(X)$ whenever $x$ is full for $X$,
\item
if, for some piece $P\subseteq\G$, $f(X)\cap P$ contains at least 2 points, there exists a piece $P'$ of $\F$ such that $f(P')=P$.
\end{itemize}
Note that if $(X,f)$ is a good pair, $f(X)$ is a good subspace of $\G$ (we require the third property in order to have this). A point is a good subspace, therefore the set such pairs is not empty.
If we set $(X,f)\leq(Y,g)$ when $X\subseteq Y$ and $g|_X=f$, then clearly any chain has an upper bound. Therefore there exists a maximal element $(M,h)$. We want to show that $M=\F$. Note that $M$ is closed, because $h$ can be extended to $\overline{M}$ as $\G$ is complete, and, as we are going to show, $\overline{M}$ is a good subspace.
\par
Let us prove that $\overline{M}$ satisfies $(3)$ first, as it is the simplest condition to check. If $[x,y]$ is a non-trivial geodesic contained in a piece $P$ and $x',y'$ are sufficiently close to $x$ and $y$ respectively, then any geodesic from $x'$ to $y'$ contains a non-trivial subgeodesic contained in $P$. This readily implies $(3)$.
\par
Let us prove $(1)$. Consider any $x,y\in\overline{M}$. We want to show that there is a geodesic contained in $\overline{M}$ which connects them. Consider any geodesic $\gamma$ in $\F$ from $x$ to $y$. Consider any piece $Q$ which intersects $\gamma$ in a non-trivial arc between $\pi_Q(x)=x'$ and $\pi_Q(y)=y'$. Each geodesic between points close enough to $x$ and $y$ intersects $Q$ in a non-trivial arc, and this readily implies, by conditions $(1)$ and $(3)$ for $M$, that $Q\subseteq M$. This argument shows that there exists a dense subset of $\gamma$ contained in $M$ (see Lemma~\ref{piecesubdense:lem}). By the remark that each geodesic connecting two points in $M$ is contained in $M$, we have that $\gamma\backslash\{x,y\}\subseteq M$, and therefore $\gamma\subseteq\overline{M}$.
\par
We are left to show $(2)$. First, we have that $\calY(M,x)=\calY(\overline{M},x)$ if $x\in M$. In fact, if $\gamma$ represents an element of $\calY(\overline{M},x)$, by the previous argument $\gamma\backslash\{y\}$ is contained in $M$, where $y$ is the last point of $\gamma$. An initial subgeodesic $\gamma'$ of $\gamma$ is contained in $M$ and so $[\gamma]=[\gamma']\in\calY(M,x)$. Also, $\calY(\overline{M},x)$ cannot contain more than one element if $x\in\overline{M}\backslash M$. Suppose in fact that this is not the case and consider geodesics $\gamma_1,\gamma_2\subseteq \overline{M}$ such that $[\gamma_1]\neq[\gamma_2]\in \calY(\overline{M},x)$. By Lemma~\ref{diffpattconc:lem}, the concatenation $\gamma$ of $\gamma_2^{-1}$ and $\gamma_1$ is a geodesic. By the proof of point $(1)$, we would have $x\in M$, as it is not an endpoint of $\gamma$.
\par
We have thus proved that $\overline{M}$ is good, so $M=\overline{M}$ by maximality and $M$ is closed.
\par
Assume that there exists $x\notin M$. Consider some $p'\in M$ and let $p$ be the last point on a geodesic $[p',x]$ which lies on $M$. We want to show that $p$ is empty for $M$, by showing that $[p,x]$ is not a representative of an element in $\calY(M,p)$. In fact, suppose  that this is not the case. Then there exists a point $q\neq p$ on $[p,x]$, a piece $P$ and a point $r\in P$, $r\neq p$, such that a geodesic $[p,r]$ is contained in $M$ and $q,r\in P$. If $p\in P$, by property $(3)$ we have $P\subseteq M$ and in particular $q\in M$, which contradicts our choice of $p$. If $p\notin P$, we can assume $r=\pi_P(p)$. So, $[p,q]$ must contain $r\in M$. This is a contradiction as $[p,q]\cap M=\{p\}$.
\par
Fix a set of representatives $R_1$ (resp. $R_2$) for the elements of $\calY(\F,p)\backslash \calY(M,p)$ (resp. $\calY(\G,p)\backslash \calY(h(M),h(p))$). At first, we want to extend $h$ to the union $M'$ of $M$ and all the elements of $R_1$.
\par
We wish to prove that up to changing representatives of $R_i$, there is a bijection $b:R_1\to R_2$ such that, for each $\gamma\in R_1$,
\begin{itemize}
\item
the P-geodesic associated to $b(\gamma)$ is $\psi'_\calF(\gamma)$ (for some choices of charts).
\end{itemize}

Consider choices of charts for $\F$ and $\G$ as in the statement.
\par
We clearly have
$$|F^{-1}_{\F,p}\left(\{x'\in\calX:\psi_\calF(x')=\psi_\calF(x)\}\right)\backslash \calY(M,p)|= \sum_{\{x'\in\calX:\psi_\calF(x')=\psi_\calF(x)\}} H_{\F,p}(x')=$$
$$H_{\G,f(p)}(\psi_\calF(x))=|F^{-1}_{\G,f(p)}(\psi_\calF(x))\backslash \calY(f(M),f(p))|$$
for each $x\in \calX$ (the first and last equality hold as each $\alpha(w)$ is infinite and $|\calY(M,p)|,|\calY(f(M),f(p))|\leq 2$ by emptiness).
\par
These considerations imply that we can choose a bijection $b:R_1\to R_2$ such that for each $\gamma\in R_1$ the P-geodesic associated to $b(\gamma)$ represents the same class in $W$ as $\psi'_\calF(\gamma)$.
To obtain what we need is now sufficient to substitute geodesics in $R_1$ and $R_2$ with appropriate subgeodesics.
\par
We are now ready define an extension of $h$, denoted by $\overline{h}:M'\to\F_\alpha$, as follows:
$$
\overline{h}(x)=
\left\{
\begin{array}{lll}
h(x) & {\rm if} & x\in M \\
b(\gamma)(s_\gamma(t)) & {\rm if} & x=\gamma(t) {\rm\ for\ some\ }\gamma\in R_1\\
\end{array}
\right.
$$
Note that $\overline{h}$ is indeed a bilipschitz embedding (see Lemma~\ref{diffpattconc:lem} and Remark~\ref{kbilip}). The last step is to extend it further so that the domain satisfies property $(3)$. Consider a piece $P$ which intersects some $\gamma\in R_1$ in a non-trivial subgeodesic $\gamma'$. As the P-geodesic associated to $b(\gamma)$ is $\psi'_\calF(\gamma)$, $\overline{h}(\gamma')$ is contained in a piece $P'$ bilipschitz equivalent to $P$.
Not only that: using the fixed choices of charts and the maps $f_i$'s, we have that there exists a $g_P:P\to P'$ which maps $\gamma'$ to $\overline{h}(\gamma')$. Let $\Delta$ be the family of pieces $P$ as above. Consider the bilipschitz equivalences $\{g_P\}_{P\in\Delta}$. We can use them to further extend $\overline{h}$ to $\widetilde{h}:M''\to\G$, where $M''=M'\cup \bigcup_{P\in\Delta} P$, as follows:
$$
\widetilde{h}(x)=
\left\{
\begin{array}{lll}
\overline{h}(x) & {\rm if} & x\in M' \\
g_P(x) & {\rm if} & x\in P \\
\end{array}
\right.
$$
Once again, this is a bilipschitz embedding. It is quite clear that $M''$ satisfies $(1)$ and $(3)$. It is also not difficult to see that it satisfies $(2)$ as well, and more precisely that
\begin{itemize}
\item
$p$ is full for $M''$,
\item
each point in $M\backslash\{p\}$ is empty (resp. full) for $M''$ if and only if it is empty (resp. full) for $M$,
\item
each point in $M''\backslash M$ is empty for $M''$.
\end{itemize}
Also, $h$ is readily checked to satisfy all the requirements needed to establish that $(M'',h)$ is a good pair. By maximality of $M$, this is a contradiction.
\par
We finally proved that if $(M,h)$ is a maximal good pair, then $M=\F$. Therefore, there exists a bilipschitz embedding $h:\F\to\G$, with the further property that $h$ preserves fullness. Let us show that this implies that $h$ is surjective. Consider, by contradiction, some $x\in\G\backslash h(\F)$. Fix some $p\in\F$ and consider a geodesic $[h(p),x]$. As $h(\F)$ is closed, being a complete metric space, we can assume that $[h(p),x]\cap h(\F)=\{h(p)\}$. Repeating an argument we already used for $M$ (recall that $h(\F)$ is a good subspace), we have that $[h(p),x]$ represents an element of $\calY(\G,h(p))\backslash \calY(h(\F),h(p))$. But $p$ is full for $\F$, so this contradicts the hypothesis that $h$ preserves fullness.
\hspace*{\fill}$\Box$

\subsection{Universal tree graded spaces}
As in the previous subsection, consider a family $\{(P_i,r_i)\}_{i\in I}$ of complete homogeneous geodesic pointed metric spaces.
\par
At this point it is a natural problem to find those maps $(w\in\calW)\mapsto \alpha(w)$ , where each $\alpha(w)$ is a cardinality, which are realized by a homogeneous tree-graded space.
\par
Our aim is now to construct a ``universal'' tree-graded space, given an infinite cardinality $\alpha(w)$ for each $w\in\calW$.

\begin{thm}\label{existence:thm}
Consider any map $(w\in\calW)\mapsto \alpha(w)$, where each $\alpha(w)$ is infinite. There exists a tree-graded space $\F=\F_\alpha$ with trivial transversal trees such that each of its pieces is isometric to one of the $P_i$'s and, for an appropriate choice of charts, for each $w\in \calW$ and $p\in\F$ we have $H_{\F,p}(w)=\alpha(w)$.
\end{thm}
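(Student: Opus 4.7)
The plan is to construct $\F_\alpha$ as the geometric realization of an abstract ``tree of pieces'', built by countable iteration of piece-attachment operations. First I observe that for small enough $x>0$, conditions (1) and (2) in the definition of ``same initial P-pattern'' are vacuous (no interval of the almost filling is contained in $[0,x]$), so the class of a P-geodesic is governed by the type $i(w)\in I$ of the piece-interval covering $x$ in condition (3). Hence realizing $H_{\F_\alpha,p}(w)=\alpha(w)$ amounts to arranging, at every point $p$, exactly $\alpha(w)$ pieces of type $i(w)$ accounting for each relevant class $w$, and then declaring a coherent choice of charts.

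The construction proceeds inductively on $n\in\N$. Set $\F_0=\{*\}$. Given $\F_n$, form $\F_{n+1}$ by, at every $p\in\F_n$ and every $w\in\calW$, attaching enough fresh copies of $(P_{i(w)},r_{i(w)})$ via the identification $r_{i(w)}\sim p$ to bring the count of type-$i(w)$ pieces at $p$ accounting for $w$ up to $\alpha(w)$. Distinct attached pieces meet $\F_n$ only at their attachment point and meet each other only at attachment points, so the resulting ``combinatorial tree of pieces'' is genuinely a tree, carrying a well-defined path metric. Let $\F_\alpha$ be the metric completion of $\bigcup_n\F_n$. Since a point present at stage $n$ has its full complement of pieces attached by stage $n+1$, every point of $\F_\alpha$ ends up with exactly the prescribed number of pieces of each type through it.

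It remains to verify: the tree-graded axioms (distinct pieces meet in $\leq 1$ point by construction, and a simple geodesic triangle must lie in a single piece, for otherwise it would yield a cycle in the combinatorial tree, which is impossible); triviality of transversal trees (any non-degenerate arc in $\F_\alpha$ crosses some piece in a non-degenerate subarc, so $T_p=\{p\}$); homogeneity of both $\F_\alpha$ and each of its pieces (the $P_i$ being homogeneous by hypothesis, combined with the transitive symmetry action on the combinatorial tree permuting like-typed attached pieces at each vertex); and finally that, with charts given by the attachment isometries, the map $F_{\F_\alpha,p}$ sends each germ to the class $w$ of its first piece, so that $H_{\F_\alpha,p}(w)=\alpha(w)$ as required.

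The main obstacle I anticipate is the rigorous verification of homogeneity, which requires inductively building isometries between arbitrary pairs of points by matching their local tree-of-pieces structures and then stitching together the individual piece isometries in a globally consistent way. A secondary technical point is checking that the stage-by-stage path metric extends coherently to the completion and that the pieces, viewed as subsets of $\F_\alpha$, remain isometric copies of the $P_i$ in the limit.
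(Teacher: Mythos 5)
Your construction has a genuine gap, and it lies precisely in the phenomenon this theorem is delicate about: fractal geodesics. Your opening reduction is already incorrect: the class $w\in\calW$ of a P-geodesic is \emph{not} determined by the type of a single first piece-interval. An almost filling need not have a first interval $[0,\epsilon)$; the intervals may accumulate at $0$ (e.g. $[1/2,1),[1/4,1/2),[1/8,1/4),\dots$), and then for every $x>0$ the set $\calI_\Gamma[x]$ is infinite and conditions $(1)$ and $(2)$ in the definition of ``same initial P-pattern'' are very far from vacuous. Such $w$ encode infinite combinatorial data near $0$, and a single attached piece at $p$ cannot ``account for'' such a class.

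This becomes fatal for the countable-iteration-and-complete construction. After $\omega$ stages, $\bigcup_n\F_n$ contains exactly those points reachable from $*$ by geodesics whose P-geodesic passes through only finitely many pieces. A point $q$ that is the endpoint of a geodesic whose almost filling accumulates from the left (a ``fractal'' geodesic, in the terminology the paper borrows from \cite{DS1}) appears only in the metric completion and lies in none of the attached copies of the $P_i$: each attached piece is a closed, complete subset placed at some finite stage, and $q$ is a limit of points from infinitely many distinct such pieces, hence belongs to none of them. The completion therefore fails even to be tree-graded with respect to your proposed pieces (they do not cover it), let alone to satisfy $H_{\F_\alpha,q}(w)=\alpha(w)$ at such $q$; at $q$ you would have $H_{\F_\alpha,q}(w)=0$ for every $w$. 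Iterating past $\omega$ does not rescue this without a wholly new argument, since one must attach pieces at \emph{every} point of the completion and re-complete, and the transfinite bookkeeping required to make this converge is exactly the hard content being swept under the rug.

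The paper sidesteps the issue by not building the space by iterated attachment at all: a point of $\F_\alpha$ is declared to \emph{be} the full data of a geodesic from a basepoint, namely a quadruple $(\rho_f,\Gamma_f,\calI_f,\beta_f)$ where $\Gamma_f$ is an (arbitrary, possibly fractal) P-geodesic and $\beta_f$ is a ``branch-selector'' taking values below the $\alpha(w)$'s. Since a fractal endpoint and the points beyond it are coordinates in the same coordinate system, every point automatically carries the full prescribed complement of pieces, and homogeneity is visible because the coordinates are manifestly translation-symmetric along any such quadruple. That construction, rather than an attachment induction, is what makes the cardinality equality hold at \emph{all} points, not just the ``finite-depth'' ones.
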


We will say that $\F$ as above is a universal tree-graded space.

\begin{rem}
If we did not require the $\alpha(w)$'s to be infinite the theorem would be false, for if the cardinality of some $F_{\F,p}^{-1}(w)$ is a most one, many other cardinalities are forced to be $0$ (for the same reason why only admissible P-geodesics are represented by geodesics in the asymptotic cone of a relatively hyperbolic group). It seems reasonable that the theorem can be extended (in the same generality) to the case when the $\alpha(w)$'s are at least 2.
\end{rem}

\begin{proof}
Denote by $W$ the set of all P-geodesics. For $\Gamma\in W$, we will denote by $[\Gamma]$ the corresponding class in $\calW$.
\par
Let us define $\F$, at first as a set. Some of the definitions which follow are inspired by the definitions of $A_\mu$ and of its distance in~\cite{DP}.
Set $\alpha=\sup_{w\in\calW}\alpha(w)$. We will need to fix for each $i$ and $x\in P_i$ different from $r_i$ an isometry $\phi_x$ of $P_i$ which maps $x$ to $r_i$.
If $\Gamma$ is a P-geodesic with associated almost filling of $[0,l]$ $\calI=\{[p_a,q_a)\}$, and $x<y\in[0,l]$ do not lie in the interior of any $I\in\calI$, denote by

\begin{itemize}
\item
$-\Gamma$ the P-geodesic with associated almost filling (once again of $[0,l]$) $\{[l-q_a, l-p_a)\}$ and such that $-\Gamma(l-q_a)=\phi_{\Gamma(p_a)}(r_{h_\Gamma(p_a)})$ (where $h_\Gamma$ denotes as usual the index selector of $\Gamma$),
\item
$\Gamma^{x,y}$ the P-geodesic with associated almost filling (of $[0,y-x]$) $\{[p_a-x,q_a-x): p_a\geq x, q_a\leq y\}$ and such that $\Gamma^{x,y}(t)=\Gamma(t+x)$.
\end{itemize}

The idea is that $-\Gamma$ moves backwards along $\Gamma$, and $\Gamma^{x,y}$ is a restriction of $\Gamma$.
\par

The elements of $\F$ will be quadruples $f=(\rho_f,\Gamma_f,\calI_f,\beta_f)$ such that

\begin{enumerate}
\item
$\rho_f\in\R_{\geq 0}$,
\item
$\calI_f$ is an almost filling of $[0,\rho_f]$,
\item
$\Gamma_f$ is a P-geodesic with associated almost filling $\calI_f$,
\item
$\beta_f:[0,\rho_f)\to \alpha$ is piecewise constant from the right, that is for each $t$ there exists $\epsilon>0$ such that $f|_{[t,t+\epsilon]}$ is constant,
\item
if $x$ lies in the interior of some $I\in \calI_f$, $\beta_f$ is constant in a neighborhood of $x$,
\item
$\beta_f(t)<\alpha([\Gamma^{t,\rho_f}_f])$ for each $t\in[0,\rho_f)$ such that $t=0$ or $\beta_f$ is not constant in a neighborhood of $t$.
\end{enumerate}
Let us construct some examples of elements of $\F$. If $x\in P_i$ and $\mu<\alpha$, denote by $f^{x,\mu}$, if it exists, the element of $\F$ such that
\begin{itemize}
\item
$\rho_{f^{x,\mu}}=d_{P_i}(r_i,x)$,
\item
$\calI_{f^{x,\mu}}=\{[0,\rho_{f^{x,\mu}})\}$,
\item
$\Gamma_{f^{x,\mu}}(0)=x$,
\item
$\beta_{f^{x,\mu}}$ is constantly $\mu$,
\end{itemize}

Condition $(6)$ restricts the possible values of $\mu$.
\par
We are now going to define a concatenation of elements of $\F$. Consider $f,g\in\F$. The concatenation $f*g$ is the element of $\F$ such that

\begin{itemize}
\item
$\rho_{f*g}=\rho_f+\rho_g$,
\item
$\calI_{f*g}=\calI_f\cup \{\rho_f+I:I\in\calI_g\}$,
\item
$\Gamma^{0,\rho_f}_{f*g}=\Gamma_f$ and $\Gamma^{\rho_f,\rho_f+\rho_g}_{f* g}=\Gamma_g$
\item
$\beta_{f*g}(t)=\beta_f(t)$, where $\beta_f(t)$ is defined and $\beta_{f*g}(t)=\beta_g(t-\rho_f)$ where $\beta_g(t-\rho_f)$ is defined,
\end{itemize}

We want now to define a distance on $\F$. Consider $f,g\in \F$. Let $s=s(f,g)$ be their separation moment, i.e.
$$s=\sup\{t|\forall t'\in[0,t]\ \Gamma_f(t')=\Gamma_g(t'), \beta_f(t')=\beta_g(t') \}.$$
Note that this supremum is never a maximum. We will consider 2 cases.
\begin{itemize}
\item
$(a)$ If $\beta_f(s)=\beta_g(s)$ and $h_{\Gamma_f}(s)=h_{\Gamma_g}(s)=i$ (in particular they are defined in $s$), denoting by $J_f\in\calI_f$ and $J_g\in\calI_g$ the intervals containing $s$, we set

$$d(f,g)=(\rho_f-s)+(\rho_g-s)+d_{P_i}(\Gamma_f(s),\Gamma_g(s))-l(J_f)-l(J_g),$$
\item
$(b)$ in any other case
$$d(f,g)=(\rho_f-s)+(\rho_g-s).$$

\end{itemize}
For later purposes, define $u=u(f,g)$ and $v=v(f,g)$ in the following way:
\begin{itemize}

\item
if $d(f,g)$ is as in case $(a)$, let $u$ and $v$ be such that $J_f=[s,u)$, $J_g=[s,v)$.
\item
if $d(f,g)$ is as in case $(b)$, set $u=v=s$,
\end{itemize}

The following remark will be used many times in the proof that $d$ is a distance.

\begin{rem}\label{sepmom:rem}
\begin{tabular}{c}
\end{tabular}
\begin{itemize}
\item
$s$ does not lie in the interior of any element of $\calI_f$ or $\calI_g$,
\item
if $u>s$ or $v>s$, then both inequalities hold and $[s,u)\in\calI_f, [s,v)\in\calI_g$,
\item
$s\leq u\leq\rho_f$, $s\leq v\leq\rho_g$,
\item
the formula in case $(a)$ can be rewritten as $d(f,g)=(\rho_f-u)+(\rho_g-v)+d_{P_i}(\Gamma_f(s),\Gamma_g(s))$,
\item
$(\rho_f-u)+(\rho_g-v)\leq d(f,g) \leq (\rho_f-s)+(\rho_g-s)$,
\item
if $s(f,g)<s(g,h)$, then $s(f,h)=s(f,g)$.

\end{itemize}
\end{rem}

\begin{lemma}
$d$ is a distance.

\end{lemma}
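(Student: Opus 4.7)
My plan is to verify the three axioms of a metric in the order: symmetry, positive-definiteness, triangle inequality. Symmetry is immediate from the definition, since both cases $(a)$ and $(b)$ are manifestly symmetric in $f,g$, and so is the separation moment $s(f,g)$. For non-negativity, case $(b)$ gives $(\rho_f-s)+(\rho_g-s)\geq 0$ trivially (from $s\leq\rho_f,\rho_g$). In case $(a)$, I would invoke the rewriting
\[ d(f,g)=(\rho_f-u)+(\rho_g-v)+d_{P_i}(\Gamma_f(s),\Gamma_g(s)) \]
from Remark~\ref{sepmom:rem}, where all three summands are non-negative because $u\leq\rho_f$, $v\leq\rho_g$ and $d_{P_i}$ is a metric.

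For $d(f,g)=0\Longrightarrow f=g$, in case $(a)$ each of the three summands above must vanish: $u=\rho_f$, $v=\rho_g$, and $\Gamma_f(s)=\Gamma_g(s)$. If $u>s$ then $[s,u)=[s,\rho_f)\in\calI_f$ and $[s,v)=[s,\rho_g)\in\calI_g$; combining this with the agreement of $\Gamma_f,\Gamma_g,\beta_f,\beta_g$ on $[0,s)$ and the fact that $s$ is not in the interior of any piece interval, one checks that all four pieces of data coincide. If $u=v=s$, then $\rho_f=\rho_g=s$ and the agreement on $[0,s)$ directly forces $f=g$ (using the right-continuity condition on $\beta$). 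Case $(b)$ forces $s=\rho_f=\rho_g$ in exactly the same fashion.

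The main obstacle, as expected, is the triangle inequality $d(f,h)\leq d(f,g)+d(g,h)$. I would split according to the three separation moments $s_{fg},s_{gh},s_{fh}$, exploiting the ultrametric-like property from Remark~\ref{sepmom:rem}: if $s_{fg}\neq s_{gh}$ then $s_{fh}=\min(s_{fg},s_{gh})$, while if $s_{fg}=s_{gh}=s$ then $s_{fh}\geq s$.

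In the asymmetric case, WLOG $s_{fg}<s_{gh}$, so $s_{fh}=s_{fg}=:s$. The key point is that $\Gamma_g$ and $\Gamma_h$ agree on a neighborhood of $s$, hence $h_{\Gamma_g}(s)=h_{\Gamma_h}(s)$, $\Gamma_g(s)=\Gamma_h(s)$, and if $[s,u)\in\calI_g$ then the corresponding $[s,u)$ lies in $\calI_h$ as well; in particular the pairs $(f,g)$ and $(f,h)$ enter cases $(a)/(b)$ simultaneously with matching $u$'s. Therefore the "$f$-side" contributions $(\rho_f-u_{fg})+d_{P_i}(\Gamma_f(s),\Gamma_g(s))$ and $(\rho_f-u_{fh})+d_{P_i}(\Gamma_f(s),\Gamma_h(s))$ are equal (since $\Gamma_g(s)=\Gamma_h(s)$), while $d(g,h)$ absorbs the remaining $(\rho_h-s)$ via the bound $d(g,h)\geq \rho_h-s_{gh}\geq \rho_h-s$ minus a controlled term which is swallowed by $\rho_g-v_{fg}\geq 0$ in $d(f,g)$.

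In the symmetric case $s_{fg}=s_{gh}=s$, a subtler analysis is needed, as both endpoints of the inequality live "at the same place" $s$. Here I would use the triangle inequality in $P_i$ to handle the subcase where all three pairs land in case $(a)$ with the same index $i$: namely $d_{P_i}(\Gamma_f(s),\Gamma_h(s))\leq d_{P_i}(\Gamma_f(s),\Gamma_g(s))+d_{P_i}(\Gamma_g(s),\Gamma_h(s))$, and the $\rho$-terms match up. The remaining mixed subcases (some pairs in $(a)$, others in $(b)$, or different indices $i$) are handled by observing that being in case $(b)$ or having different indices only increases the measured distance since the length-corrections $l(J_f),l(J_g)$ disappear. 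In every subcase the inequality reduces to an application of the triangle inequality in a single $P_i$ combined with elementary bookkeeping on the almost fillings.
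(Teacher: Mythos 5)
Your plan of organizing by the ultrametric behavior of the three separation moments $s_{fg},s_{gh},s_{fh}$ is a genuinely different decomposition from the paper's, which instead sorts by the relative positions of $u_1=u(f,g)$, $u_3=u(f,h)$, $s_3$, and $v_2,v_3$ (six top-level cases with several subcases). Your ``asymmetric'' case is handled cleanly in spirit, and the preliminary remarks on symmetry and positive-definiteness are correct (the paper omits these as routine). However, the proposal has two real gaps. First, in the asymmetric case you only note that the $u$'s match; you also need $v(f,g)=v(f,h)$ (which follows because $[s,v(f,g))\in\calI_g$ and $[s,v(f,h))\in\calI_h$ must coincide since $g,h$ agree on $[0,s_{gh})$ and $s<s_{gh}$ cannot lie in the interior of either interval), after which the required estimate reduces to $d(g,h)\geq\rho_h-\rho_g$; this relies on the bound $d\geq|\rho_f-\rho_g|$, which you do not establish, and your stated bound ``$d(g,h)\geq\rho_h-s_{gh}$'' combined with ``a controlled term swallowed by $\rho_g-v_{fg}$'' does not in fact yield it.

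Second, and more seriously, your symmetric case $s_{fg}=s_{gh}=s$ is where almost all of the paper's work lies (its cases 2 through 6), and your treatment is not correct as stated. When $s_{fh}>s$ strictly, the pair $(f,h)$ is evaluated at $s_{fh}$, so $d(f,h)$ involves $\Gamma_f(s_{fh})$ and $\Gamma_h(s_{fh})$ rather than $\Gamma_f(s)$, $\Gamma_h(s)$; the proposed triangle inequality $d_{P_i}(\Gamma_f(s),\Gamma_h(s))\leq d_{P_i}(\Gamma_f(s),\Gamma_g(s))+d_{P_i}(\Gamma_g(s),\Gamma_h(s))$ is not what appears in the estimate, and ``the $\rho$-terms match up'' is false because the offsets $u,v$ at $s_{fh}$ differ from those at $s$. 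Your dismissal of the mixed case $(a)/(b)$ subcases on the grounds that dropping the length-corrections $l(J_f),l(J_g)$ ``only increases the measured distance'' is also incorrect: whether this helps depends on which side of the inequality the dropped term sits, and the paper's cases 5 and 6 require careful sign-tracking precisely because of this. So the high-level strategy is promising and arguably cleaner than the paper's, but to complete it one must do explicit bookkeeping in the symmetric case that is comparable in bulk to the paper's own computation.
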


\begin{proof}
The only non trivial property to check is the triangular inequality. Consider $f,g,h\in\F$. We have to show that $d(f,h)\leq d(f,g)+d(g,h)$. Set $s_1=s(f,g)$, $s_2=s(g,h)$ and $s_3=s(f,h)$. Define analogously $u_i$ and $v_i$, $i=1,2,3$. We will consider several cases, which cover all possible situations up to exchanging the roles of $f$ and $h$ (and therefore, for example, $u_1$ and $v_2$).
\par
$1)$ $u_1\leq s_3$, $v_2\leq s_3$. In this case we get
$$d(f,h)\leq (\rho_f-s_3)+(\rho_h-s_3)\leq (\rho_f-u_1)+(\rho_h-v_2)\leq $$
$$(\rho_f-u_1)+(\rho_g-v_1)+(\rho_g-u_2)+(\rho_h-v_2)\leq d(f,g)+d(g,h).$$
\par
$2)$ $s_3< u_1\leq u_3$, $s_1<u_1$ and $v_2\leq v_3$. We have that $[s_3,u_3)$ and $[s_1,u_1)$ both belong to $\calI_f$ and their intersection in not empty. Therefore $[s_3,u_3)=[s_1,u_1)$, that is, $s_3=s_1$ and $u_3=u_1$. Also, clearly $s_2\geq s_3$, by the definition of separation moment. We will consider 2 subcases.
\par
$2')$ $s_2<v_2$. In this case, by the same argument we just used, $s_2=s_3=s_1$ and $v_2=v_3$. For $i=h_{\Gamma_f}(s_3)$ (we will not repeat this), using the relations we found so far and the triangular inequality in $P_i$, we have that
$$d(f,h)=(\rho_f-u_3)+d_{P_i}(\Gamma_f(s_3),\Gamma_h(s_3))+(\rho_h-v_3)\leq $$
$$(\rho_f-u_1)+d_{P_i}(\Gamma_f(s_1),\Gamma_g(s_1))+(\rho_g-v_1)+(\rho_g-v_2)+d(\Gamma_g(s_2),\Gamma_h(s_2))+(\rho_h-v_2)$$
$$= d(f,g)+d(g,h).$$
\par
If $s_2=v_2$, we have $s_2\in[s_3,v_3]$. But $s_2$ cannot belong to the interior of $[s_3,v_3)\in\calI_h$. Therefore either $s_2=s_3$ or $s_2=v_3$.
But $s_2=s_3$ is contradictory as it implies $\beta_g(s_2)=\beta_g(s_1)=\beta_f(s_1)=\beta_f(s_3)=\beta_h(s_3)=\beta_h(s_2)$ and similarly $h_{\Gamma_g}(s_2)=h_{\Gamma_h}(s_2)$, therefore we should have $s_2<v_2$.
\par
$2'')$ $v_2=s_2=v_3$. As $[s_1,v_1)\in\calI_g$, $[s_1,v_2)=[s_3,v_3)\in\calI_h$ and $v_3$ is the separation moment of $g$ and $h$, we get $v_1=v_2$. We have, using $s_1=s_3<s_2$ (and the definition of separation moment),
$$d(f,h)=(\rho_f-u_3)+d_{P_i}(\Gamma_f(s_3),\Gamma_h(s_3))+(\rho_h-v_3)= $$
$$(\rho_f-u_1)+d_{P_i}(\Gamma_f(s_1),\Gamma_g(s_1))+(\rho_h-v_2)\leq $$
$$(\rho_f-u_1)+d_{P_i}(\Gamma_f(s_1),\Gamma_g(s_1))+ (\rho_g-v_1)+(\rho_h-u_2)+(\rho_h-v_2)\leq d(f,g)+d(g,h).$$
\par
$3)$ $s_3< u_1\leq u_3$, $s_1=u_1$ and $v_2\leq v_3$. As $s_1$ cannot lie in the interior of $[s_3,u_3)\in\calI_f$, $s_1=u_1=u_3$. Up to exchanging the roles of $f$ and $h$ we already treated the case when $s_2<v_2$ (case $2''$). So, we can assume $s_2=v_2$. As $s_1=u_3>s_3$, we have $s_2=s_3$, in particular $s_1>s_2$.
But $\beta_g(s_2)=\beta_f(s_2)=\beta_f(s_3)=\beta_h(s_3)=\beta_h(s_2)$ and analogously $h_{\Gamma_g}(s_2)=h_{\Gamma_h}(s_2)$, so we should have $s_2<v_2$, a contradiction. This (sub)case is therefore impossible.
\par
$4)$ $u_1=u_3=s_3$, $v_2\leq v_3$. Note that $v_3=s_3$, and so $v_2\leq s_3$.
$$d(f,h)= (\rho_f-s_3)+(\rho_h-s_3)\leq(\rho_f-u_1)+(\rho_g-v_1)+(\rho_g-u_2)+(\rho_h-v_2)\leq $$
$$d(f,g)+d(g,h).$$

\par
$5)$ $u_1>u_3>s_3$. In this case we have $s_1\geq u_3$ (if $s_1=u_1$ it is obvious, if $s_1<u_1$ it follows from the fact that $u_3$ cannot lie in the interior of $[s_1,u_1)$). Also, $s_2=\min\{s_3,s_1\}=s_3$. Observe that $s_2<u_2$, as $\beta_g(s_2)=\beta_f(s_2)=\beta_f(s_3)=\beta_h(s_3)=\beta_h(s_2)$ and similarly $h_{\Gamma_g}(s_2)=h_{\Gamma_h}(s_2)$ (we used $s_1\geq u_3>s_3=s_2$, $s_3=s_2$ and $u_3>s_3$).
Note that $v_2=v_3$ and $u_2=u_3$. In fact, $[s_2,v_3)=[s_3,v_3)\in\calI_h$ and $[s_2,u_3)=[s_3,u_3)\in\calI_f$, but also $[s_3,u_3)\in\calI_g$ as $u_3<u_1$. If $s_1=u_1$, we have
$$d(f,h)=(\rho_f-u_3)+d_{P_i}(\Gamma_f(s_3),\Gamma_h(s_3))+(\rho_h-v_3)\leq $$
$$(\rho_f-s_1)+(s_1-u_3)+2(\rho_g-s_1)+d_{P_i}(\Gamma_g(s_2),\Gamma_h(s_2))+(\rho_h-v_2)=$$
$$ (\rho_f-s_1)+(\rho_g-s_1)+(\rho_g-u_2)+d_{P_i}(\Gamma_g(s_2),\Gamma_h(s_2))+(\rho_h-v_2)=d(f,g)+d(g,h).$$
If $s_1<u_1$ and $j=h_{\Gamma_f}(s_1)$, the chain of inequalities can be modified as follows:
$$d(f,h)=(\rho_f-u_3)+d_{P_i}(\Gamma_f(s_3),\Gamma_h(s_3))+(\rho_h-v_3)\leq $$
$$(\rho_f-u_1)+(u_1-s_1)+(s_1-u_3)+2(\rho_g-v_1)+[(v_1-s_1)-(v_1-s_1)]+$$
$$d_{P_i}(\Gamma_g(s_2),\Gamma_h(s_2))+(\rho_h-v_2)= $$
$$(\rho_f-u_1)+d_{P_j}(\Gamma_f(s_1),r_j)+(\rho_g-v_1)+(\rho_g-u_2)-(v_1-s_1)+$$
$$d_{P_i}(\Gamma_g(s_2),\Gamma_h(s_2))+(\rho_h-v_2)\leq $$
$$(\rho_f-u_1)+ d_{P_j}(\Gamma_f(s_1),\Gamma_g(s_1))+(v_1-s_1)-(v_1-s_1)+(\rho_g-v_1) +d(g,h)=$$
$$d(f,g)+d(g,h).$$

\par
$6)$ $u_1>u_3=s_3$. As in case $5)$, $s_1\geq u_3$, so $s_1\geq s_3$. Note that $s_2\geq s_3$. If $s_1=u_1$, we also have $s_1>s_3$ and hence $s_2=s_3$. Also, $\beta_g(s_2)=\beta_f(s_2)=\beta_f(s_3)\neq\beta_h(s_3)=\beta_h(s_2)$, hence $u_2=v_2=s_2$ (we used $s_1>s_3=s_2$, $s_3=u_3$ and $s_3=s_2$).
$$d(f,h)=(\rho_f-s_3)+(\rho_h-s_3)\leq (\rho_f-s_1)+(s_1-s_3)+2(\rho_g-s_1)+(\rho_h-s_2)=$$
$$(\rho_f-s_1)+(\rho_g-s_1)+(\rho_g-s_2)+(\rho_h-s_2)=d(f,g)+d(g,h).$$
We are left to deal with the case $s_1<u_1$, which has 2 subcases
\par
$6')$ $s_1=s_3$. In this case $s_2=s_3$, for otherwise (i.e. for $s_2>s_3=s_1$) we would have $\beta_f(s_3)=\beta_f(s_1)=\beta_g(s_1)=\beta_h(s_1)=\beta_h(s_3)$ and similarly $h_{\Gamma_f}(s_3)= h_{\Gamma_h}(s_3)$, so $s_3<u_3$ (we used $s_3=s_1$, $s_1<u_1$, $s_1<s_2$ and $s_1=s_3$). Also, $s_2=u_2$ as $\beta_g(s_2)=\beta_g(s_1)=\beta_f(s_1)=\beta_f(s_3)\neq\beta_h(s_3)=\beta_h(s_2)$ (we used $s_2=s_1$, $s_1<u_1$, $s_1=s_3$, $s_3=u_3$ and $s_3=s_2$).
\par
$6'')$ $s_1>s_3$. Also in this case $s_2=s_3$, and $s_2=u_2$ because $\beta_g(s_2)=\beta_g(s_3)=\beta_f(s_3)\neq\beta_h(s_3)=\beta_h(s_2)$.
\par
In both cases $6')$ and $6'')$ the following estimate holds:
$$d(f,h)=(\rho_f-s_3)+(\rho_h-s_3)\leq (\rho_f-u_1)+d_{P_i}(\Gamma_f(s_1),r_i)+(s_1-s_3)+$$
$$2(\rho_g-v_1)+(\rho_h-s_2)\leq$$
$$(\rho_f-u_1)+d_{P_i}(\Gamma_f(s_1),\Gamma_g(s_1))+(\rho_g-v_1)+(v_1-s_1)+(s_1-s_3)+(\rho_g-v_1)+(\rho_h-s_2)$$
$$(\rho_f-u_1)+d_{P_i}(\Gamma_f(s_1),\Gamma_g(s_1))+(\rho_g-v_1)+(\rho_g-s_2)+(\rho_h-v_2)=d(f,g)+d(g,h).$$
\end{proof}

\begin{lemma}
$\F$ is complete.
\end{lemma}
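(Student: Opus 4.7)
The plan is to construct a limit of a Cauchy sequence $(f_n)\subseteq\F$ by extracting pointwise limits of all four components of the quadruple $(\rho_{f_n},\Gamma_{f_n},\calI_{f_n},\beta_{f_n})$, then verifying axioms (1)--(6) and convergence in $d$.

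First I would establish the elementary inequality $d(f,g)\geq|\rho_f-\rho_g|$. In case $(b)$ this is immediate from $d(f,g)=(\rho_f-s)+(\rho_g-s)$. In case $(a)$, using that $d_{P_i}(r_i,\Gamma_f(s))=u-s$ and $d_{P_i}(r_i,\Gamma_g(s))=v-s$, the triangle inequality in $P_i$ gives $d_{P_i}(\Gamma_f(s),\Gamma_g(s))\geq|u-v|$, so $d(f,g)\geq(\rho_f-u)+(\rho_g-v)+|u-v|\geq|\rho_f-\rho_g|$. Consequently $\rho_{f_n}\to\rho$ for some $\rho\geq 0$.

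Next I would prove the key stabilization lemma: if $d(f_n,f_m)<\epsilon$ and $t<\min(\rho_{f_n},\rho_{f_m})-\epsilon$, then exactly one of the following holds. Either $t<s(f_n,f_m)$, in which case $\Gamma_{f_n}$ and $\Gamma_{f_m}$ (resp.\ $\beta_{f_n}$ and $\beta_{f_m}$) coincide on $[0,t]$; or we are in case $(a)$ and $t\in[s,u)=[s,v)$ lies in a common piece interval on which both $\Gamma$'s and both $\beta$'s are constant, with $d_{P_i}(\Gamma_{f_n}(s),\Gamma_{f_m}(s))<\epsilon$. (Case $(b)$ with $t\geq s$ is excluded because it would force both $\rho_{f_n}-s$ and $\rho_{f_m}-s$ to be $\leq\epsilon$, contradicting $t<\rho_{f_n}-\epsilon$.) From this lemma, for every $t<\rho$ the value $\beta_{f_n}(t)$ is \emph{eventually constant} in $n$, and $\Gamma_{f_n}(t)$ is Cauchy in the appropriate fixed $P_i$ (the index $h_{\Gamma_{f_n}}(t)$ also eventually stabilizes).

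Using this, I would define $f_\infty$ as follows: $\rho_{f_\infty}=\rho$; for each $t<\rho$, set $\beta_{f_\infty}(t)$ to be the eventual value of $\beta_{f_n}(t)$ and $\Gamma_{f_\infty}(t)=\lim_n\Gamma_{f_n}(t)$; and let $\calI_{f_\infty}$ be the family of maximal nontrivial closed-open intervals on which $\Gamma_{f_\infty}$ is constant (equivalently, the $[s,u)$ intervals that arise as common limits of the piece intervals of the $f_n$'s). Conditions (1)--(3) are built into the construction, (4)--(5) follow because the piecewise-constancy of each $\beta_{f_n}$ survives the pointwise limit (and constancy on interiors of piece intervals is inherited), and for (6) one uses that, for fixed $t$, the tail $[\Gamma^{t,\rho_f}_f]\in\calW$ also stabilizes, so the bound $\beta_{f_\infty}(t)<\alpha([\Gamma^{t,\rho}_{f_\infty}])$ inherits from the analogous bounds for $f_n$ with $n$ large.

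Finally, for convergence $d(f_n,f_\infty)\to 0$, I would apply the stabilization lemma with $f_\infty$ in place of $f_m$: given $\epsilon>0$ choose $N$ with $d(f_n,f_m)<\epsilon/3$ for $n,m\geq N$ and $|\rho_{f_n}-\rho|<\epsilon/3$; the lemma forces either $s(f_n,f_\infty)\geq\rho-\epsilon$ or $f_n,f_\infty$ to share a common piece interval containing this threshold, and in either case the case-$(a)$/case-$(b)$ formula gives $d(f_n,f_\infty)=O(\epsilon)$. The main obstacle I expect is the careful bookkeeping of the interval structure in the limit---ensuring that $\calI_{f_\infty}$ is genuinely an almost filling of $[0,\rho]$ (density of $\bigcup\calI_{f_\infty}$) and handling the boundary behavior at $t\to\rho^-$ so that axiom (6) and the distance computation are not spoiled by ``phantom'' intervals appearing or disappearing at the right endpoint.
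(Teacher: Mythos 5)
Your proposal is correct and follows essentially the same route as the paper: both begin with the estimate $d(f,g)\geq|\rho_f-\rho_g|$ to extract $\rho=\lim\rho_{f_n}$, then analyze the pointwise behavior of $\Gamma_{f_n}(t)$ and $\beta_{f_n}(t)$ via the separation moment, concluding that $\beta$ eventually stabilizes and $\Gamma$ is eventually constant except possibly on a terminal piece interval where $\{\Gamma_{f_n}(t_0)\}$ is Cauchy in the relevant $P_i$. Your "stabilization lemma" just makes explicit the dichotomy that the paper packages through the threshold $t_0$; a couple of small slips (writing $[s,u)=[s,v)$ when the intervals need not coincide, and invoking the lemma with $f_\infty$ before $d(f_n,f_\infty)$ is known to be small—better estimated directly) do not affect the substance.
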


\begin{proof}
Note that $d(f,g)\geq|\rho_f-\rho_g|$. Therefore, given a Cauchy sequence $f_n$ we have that $\rho_{f_n}\to \rho$, for some $\rho\geq 0$. If for some $t\in [0,\rho)$ the sequences $\{\Gamma_{f_n}(t)\}$, $\{\beta_{f_n}(t)\}$ (which are defined at least for $n$ large enough) is definitively constant, then define $\Gamma_f(t)=\Gamma_{f_n}(t)$, $\beta_f(t)=\beta_{f_n}(t)$ for $n$ large.
This may not happen for each $t$. However, in this case, it is easily seen that there exists $t_0<\rho$ such that
\begin{itemize}
\item
$\{\Gamma_{f_n}(t)\}$, $\{\beta_{f_n}(t)\}$ are definitively constant for $t<t_0$,
\item
$\beta_{f_n}(t)$ is definitively constant for $t\in[t_0,t)$,
\item
$\Gamma_{f_n}$, for $n$ large enough, is constant on $[t_0,\rho_{f_n})$, $h_{\Gamma_{f_n}}(t_0)$ is definitively constant (say equal to $i$) and the sequence $\{\Gamma_{f_n}(t_0)\}_{n\geq n_0}$ for $n_0$ large enough is a Cauchy sequence in $P_i$.
\end{itemize}
Using the completeness of the $P_i$'s a limit for $\{f_n\}$ is easily constructed.
\end{proof}

Let us show that $\F$ is geodesic. We will need a notion of restriction of a P-geodesic $\Gamma$ to a closed subinterval. For each $i$ and any pair of points $q,q'\in P_i$ choose a geodesic $\gamma_{q,q'}$ which connects them.
Suppose that $\Gamma$ has domain $\calI=\{I_a\}$, where $\calI$ is an almost filling of $[0,l]$. Consider some $0\leq x\leq l$. First, we define the domain of $\Gamma|_{[0,x]}$ to be
$$\calJ=\{J\cap[0,x):J\in\calI\textrm{\ and\ }J\cap[0,x)\neq\emptyset\}.$$
If $J\in\calJ$ denote by $\hat{J}$ the only interval in $\calI$ such that $J=\hat{J}\cap[0,x)$. Define $\Gamma|_{[0,x]}(J)= \gamma_{r_h,\Gamma(\hat{J})}(l(J))$, where $h=h_\Gamma(\hat{J})$.
\par
We can now define, for $f\in\F$, its $\F-$restriction $f\|_{[0,x)}$ to $[0,x)$, for $0\leq x\leq\rho_f$. We set, for $t\in [0,x)$ and in the domain of $\Gamma_f$, $\Gamma_{f\|_{[0,x)}}(t)=\Gamma|_{[0,x]}(t)$ and $\beta_{f\|_{[0,x]}}(t)=\beta_f(t)$ ($\rho_{f\|_{[0,x)}}=x$).

\par

We are finally ready to describe a geodesic between $f,g\in\F$. If $d(f,g)$ is given by the formula in case $(b)$, then $\gamma$ can be easily checked to be a geodesic parametrized by arc length between $f$ and $g$, where

\par\smallskip
\begin{tabular}{c l l}
$\gamma(t)=$ & $f\|_{[0,\rho_f-t)}$ & if $0\leq t\leq \rho_f-s$\\
 & $g\|_{[0,2s-\rho_f+t)}$ & if $\rho_f-s\leq t \leq (\rho_f-s)+(\rho_g-s)$\\
\end{tabular}
\par\smallskip

If $d(f,g)$ is given by the formula in case $(a)$, set $\delta=\gamma_{\Gamma_f(s),\Gamma_g(s)}$.

Set $i=h_{\Gamma_f}(s)$, $u=u(f,g)$, $v=v(f,g)$ and $d=d_{P_i}(\Gamma_f(s),\Gamma_g(s))$. The geodesic $\gamma$ between $f$ and $g$ is given by

\par\smallskip
\begin{tabular}{c l l}
$\gamma(t)=$ & $f\|_{[0,\rho_f-t)}$              & if $0\leq t\leq \rho_f-s-u$\\
             & $f\|_{[0,s+t_1)}*f^{\delta(t)}$   & if $\rho_f-s-u\leq t\leq \rho_f-s-u+d$\\
             & $g\|_{[0,2s+t_2+t_1-d-\rho_f+t)}$ & if $\rho_f-s-u+d\leq t \leq$\\
             &                                   & $(\rho_f-s)+(\rho_g-s)+d-u-v$\\
\end{tabular}
\par\smallskip
We will call the geodesics we just described explicit geodesics.
\par
In order to prove that $\F$ is tree-graded, we have to find a candidate set of pieces. For $i\in I$ denote by $w_i\in \calW$ the class in $\calW$ of a P-geodesic $\Gamma$ with associated almost-filling (of $[0,1]$) simply $\{[0,1)\}$ with $\Gamma(0)=x$ for some $x\in P_i$, $d(x,r_1)=1$. If $f,g\in\F$ set $f\leq g$ if their separation moment is $\rho_f$ (it actually is a partial order). Given $f\in\F$, $i\in I$ and $\beta<\alpha(w_i)$, set
$$P(f,i,\beta)=\{g\in\F: f\leq g, \beta_g(\rho_f)=\beta,{\rm \ and,\ if\ }f<g, [\Gamma_g^{\rho_f,\rho_g}]=w_i, [\rho_f,\rho_g)\in\calI_g\}.$$

Each $P(f,i,\beta)$ is easily seen to be isometric to $P_i$ (the isometry $P_i\to P(f,i,\beta)$ is given by $x\mapsto f*f^{x,\beta}$). Let $\calP$ be the set of all $P(f,i,\beta)$'s. We want to show that $\F$ is tree-graded with respect to $\calP$. We will use the characterization of tree-graded spaces given by Theorem~\ref{projgrad:thm}. More precisely, we will use the version stated in Remark~\ref{mixed:rem}.
\par
First, notice that each $P\in\calP$ is geodesic and complete, being isometric to some $P_i$. In particular, they are closed in $\F$.
\par
Also, it is readily checked that each non-trivial explicit geodesic intersects a piece in a non-trivial subgeodesic. So, each geodesic triangle whose sides are explicit geodesics which intersect each $P\in\calP$ in at most one point is trivial. Therefore, if we find a projection system for $\calP$, by Lemma~\ref{onlygeod:lem} we are done.
\par
Consider $P=P(f,i,\beta)\in\calP$. For each $r\in\F$ define $\pi_P(r)$ to be the first point on the explicit geodesic between $r$ and $f$. It is obvious that $(P'1)$ holds.
\par
The following claim can be checked directly.
\begin{claim}
Suppose that $\pi_P(r_1)\neq \pi_P(r_2)$. Then the explicit geodesic from $r_1$ to $r_2$ is obtained concatenating the explicit geodesics from $r_1$ to $\pi_P(r_1)$, from $\pi_P(r_1)$ to $\pi_P(r_2)$ and from $\pi_P(r_2)$ to $r_2$.
\end{claim}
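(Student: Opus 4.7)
The plan is to verify the claim by direct case analysis on the explicit-geodesic formulas, after first obtaining a concrete description of $\pi_P$.

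The first step is to describe $\pi_P(r)$ explicitly. Recall that $P=P(f,i,\beta)$ consists of $f$ together with those $g$ satisfying $s(g,f)=\rho_f$, $\beta_g(\rho_f)=\beta$ and whose piece-interval $[\rho_f,\rho_g)\in\calI_g$ has P-geodesic class $w_i$. Tracing the explicit geodesic between $f$ and $r$: if $r\in P$ then $\pi_P(r)=r$; otherwise one checks from the formulas defining the explicit geodesic that it meets $P$ only at $f$ unless case $(a)$ applies for $d(r,f)$ with separation moment exactly $\rho_f$, piece-index $i$ and $\beta$-value $\beta$, in which case the entry point of the geodesic into $P$ from the $r$-side is $f*f^{y,\beta}$ where $y=\Gamma_r(\rho_f)\in P_i$. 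Hence in every case $\pi_P(r)$ has the form $f*f^{y,\beta}$ for some $y\in P_i$ (the choice $y=r_i$ being degenerate and encoding $\pi_P(r)=f$).

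Next, assume $\pi_P(r_1)\neq\pi_P(r_2)$ and write $\pi_P(r_j)=f*f^{y_j,\beta}$ with $y_1\neq y_2\in P_i$. The previous step, combined with the definition of the separation moment, forces $r_1$ and $r_2$ to agree on $[0,\rho_f)$ and forces their piece-intervals at $\rho_f$ to have the common index $i$ and common $\beta$-value $\beta$; in particular $s(r_1,r_2)\geq\rho_f$, and $s(r_1,r_2)>\rho_f$ would imply $y_1=y_2$, a contradiction. Therefore $s(r_1,r_2)=\rho_f$, so $d(r_1,r_2)$ is computed by the case $(a)$ formula with piece-index $i$ and $\Gamma_{r_j}(\rho_f)=y_j$; the three summands $(\rho_{r_1}-u)$, $d_{P_i}(y_1,y_2)$, $(\rho_{r_2}-v)$ coincide respectively with $d(r_1,\pi_P(r_1))$, $d(\pi_P(r_1),\pi_P(r_2))$ and $d(\pi_P(r_2),r_2)$, giving additivity of lengths.

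To conclude I would verify the parametrization match. The case $(a)$ explicit geodesic from $r_1$ to $r_2$ has three phases: a descent along $r_1$, a traversal in $P_i$ of length $d_{P_i}(y_1,y_2)$, and an ascent along $r_2$. The descent phase coincides with the (case $(b)$) explicit geodesic from $r_1$ to $\pi_P(r_1)$; the middle phase coincides with the geodesic in $P_i$ from $y_1$ to $y_2$, which is by construction the explicit geodesic from $\pi_P(r_1)$ to $\pi_P(r_2)$ inside $P$; and the ascent phase is the explicit geodesic from $\pi_P(r_2)$ to $r_2$. The main obstacle is the bookkeeping of matching parameters $s,u,v$ across the three sub-geodesics and handling the degenerate sub-cases in which some $\pi_P(r_j)=f$ (so the corresponding end sub-geodesic is trivial and case $(b)$ collapses). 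The crucial point is the equality $s(r_1,r_2)=\rho_f$ with the separation moment lying inside $P$, which is exactly what forces the case $(a)$ formula to decouple as claimed.
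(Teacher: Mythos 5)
The paper states this claim with no argument (``The following claim can be checked directly''), so there is no author proof to compare against; your outline is the natural direct verification and its overall shape is right. However there is a genuine gap in the middle step, and the parenthetical explaining the degenerate sub-cases is not correct.

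The problem is the assertion that $\pi_P(r_1)\neq\pi_P(r_2)$ ``forces $r_1$ and $r_2$ to agree on $[0,\rho_f)$ and forces their piece-intervals at $\rho_f$ to have the common index $i$ and $\beta$-value $\beta$,'' hence $s(r_1,r_2)=\rho_f$. This is false whenever one of $y_1,y_2$ equals $r_i$. Take $r_1$ whose history diverges from $f$ strictly before $\rho_f$; then the descent along $r_1$ in the explicit geodesic from $r_1$ to $f$ never meets $P$ and the ascent hits $P$ only at its endpoint $f$, so $\pi_P(r_1)=f$ (i.e.\ $y_1=r_i$) even though $s(r_1,f)<\rho_f$ and $r_1$ need have neither the piece index $i$ nor the $\beta$-value $\beta$ at $\rho_f$. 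If simultaneously $\pi_P(r_2)\neq f$, you are in the hypothesis of the claim with $s(r_1,r_2)=s(r_1,f)<\rho_f$, and the case-$(a)$ formula at $\rho_f$ that your argument relies on does not apply; the decomposition must instead be derived by comparing the case-$(a)$ or case-$(b)$ formula at $s(r_1,r_2)$ for $d(r_1,r_2)$ and for $d(r_1,f)$, noting that $r_2$ and $f$ coincide on $[0,\rho_f)$ so the two descents, and the two ascents up to the level $\rho_f$, are identical. Your sketch does not do this.

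Your parenthetical characterization of the degenerate sub-case -- that when $\pi_P(r_j)=f$ ``the corresponding end sub-geodesic is trivial'' -- is also wrong: $\pi_P(r_j)=f$ does not imply $r_j=f$, so the explicit geodesic from $r_j$ to $\pi_P(r_j)$ is in general nontrivial. What actually degenerates is precisely the claim $s(r_1,r_2)=\rho_f$, which is the ingredient you call ``the crucial point,'' so the gap is not peripheral bookkeeping. A minor further slip: for $d(r,f)$ with separation moment $\rho_f$ one is necessarily in case $(b)$, not case $(a)$, since $\beta_f(\rho_f)$ and $h_{\Gamma_f}(\rho_f)$ are undefined ($\beta_f$ has domain $[0,\rho_f)$); this does not affect the substance of your description of $\pi_P$, but it should be stated correctly.
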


In particular, $d(r_1,r_2)=d(r_1,\pi_P(r_1))+d(\pi_P(r_1),\pi_P(r_2))+d(\pi_P(r_2),r_2)$, that is, $(P'2)$.
\par
To conclude the proof that $\F$ is tree-graded, we are left to show $(T_1)$. Consider $P(f,i,\beta)$ and $P(g,j,\delta)$. First of all $P(f,i,\beta)$ can have a point in common with $P(g,j,\delta)$ only if $f\leq g$ or vice versa.
Let us consider the case $f<g$ (the case $g<f$ is of course analogous, so we will be left to deal only with the case $f=g$).
If $h\in P(f,i,\beta)\cap P(g,j,\delta)$ (in particular $h\geq g>f$), then $\calI_h$ contains $[\rho_f,\rho_h)$. If we also had $h>g$, $\calI_h$ would contain $[\rho_g,\rho_h)$, which is different from $[\rho_f,\rho_h)$, but their intersection is not empty, a contradiction. This readily implies that if $f\in P(f,i,\beta)\cap P(g,j,\delta)$, then $\rho_h=\rho_g$, and so we must have $h=g$.
\par
In the case $f=g$, it is clear that if $i\neq j$ or $\beta\neq\delta$ then $f=g$ is the only point in $P(f,i,\beta)\cap P(g,j,\delta)$.
\par
In order to prove the theorem, we are left to show that for each $w\in \calW$, $F_{\F,p}^{-1}(w)$ has cardinality $\alpha(w)$, for some choice of charts. Choose the identification $(P_i,r_i)\to(P(f,i,\beta),f)$ to be $x\mapsto f*f^{x,\beta}$. Recall that we fixed for each $i$ and $x\in P_i$ different from $r_i$ an isometry $\phi_x$ of $P_i$ which maps $x$ to $r_i$. These isometries, together with the already fixed identifications, yield a choice of charts, which is the one we will use.
\par
Note that each equivalence class in $\calY(\F,p)$ has a representative which is an explicit geodesic, by the fact that there is an explicit geodesic connecting each pair of points in $\F$ (clearly, geodesics with the same endpoints have the same initial pattern). Therefore, in what follows we are allowed to restrict to considering explicit geodesics only.
\par
Consider any $f\in\F$. There can be 4 kinds of explicit geodesics starting from $f$, which are listed below.

\begin{enumerate}
\item
Explicit geodesics $\gamma$ such that, for each $\epsilon>0$ in the domain of $\gamma$, $f<\gamma(\epsilon)$ and $\beta_{\gamma(\epsilon)}$ is not constant in a neighborhood of $\rho_f$. In this case $F_{\F,f}([\gamma])=[\Gamma^{\rho_f,\rho_f+\epsilon}_{\gamma(\epsilon)}]$ for $\epsilon$ as above.
\item
Explicit geodesics as in point $(1)$ except that $\beta_{\gamma(\epsilon)}$ is constant in a neighborhood of $\rho_f$. There is one element in each $F_{\F,f}^{-1}(w)$ which can be represented by this kind of explicit geodesics.
\item
Explicit geodesics $\gamma$ such that $F_{\F,f}([\gamma])=[-\Gamma_f]$.
\item
Other explicit geodesics: in this case there exists an interval in $\calI_f$ of the kind $[t,\rho_f)$ such that $F_{\F,f}([\gamma])=[-\Gamma_f]$, for any $x\in P_i$, $x\neq r_i$, where $i=h_{\Gamma_f}(t)$.
\end{enumerate}

Let $G$ be the set of equivalence classes in $\calY(\F,f)$ of explicit geodesics of type $(1)$. We claim that for each $w\in\calW$, the map $H_w:(F_{\F,f}^{-1}(w)\cap G)\mapsto \alpha(w)$ given by $[\gamma]\mapsto \beta_{\gamma(\epsilon)}(\rho_f)$ (for any $\epsilon>0$ in the domain of $\gamma$) is injective and the image differs from $\alpha(w)$ for at most one element.

If this holds, as geodesics of type $(2)-(4)$ accounts for finitely many elements in each $F_{\F,f}^{-1}(w)$ and each $\alpha(w)$ is infinite, we are done.
\par
We are left to prove the claim. Let us prove ``almost-surjectivity'' first. Suppose $w=[\Gamma]$ for some P-geodesic $\Gamma$ with domain the almost filling of $[0,l]$ $\calI$. For each $\kappa<\alpha(w)$, there exists an element $g(\kappa)$ of $\F$ such that
\begin{itemize}
\item
$\rho_{g(\kappa)}=l$,
\item
$\calI_{g(\kappa)}=\calI$,
\item
$\Gamma_{g(\kappa)}=\Gamma$,
\item
$\beta_{g(\kappa)}$ is constantly $k$.

\end{itemize}

We have that the explicit geodesic $\gamma$ from $f$ to $f*g(\kappa)$ is of type $(1)$ for each but at most 1 value of $k$. As clearly $\gamma$ is contained in $F_{\F,f}^{-1}(w)$ and $H_w(\gamma)=\kappa$, ``almost-surjectivity'' is proved.
\par
For what regards injectivity, if $H_w(\gamma_1)=H_w(\gamma_2)$, by the fact that the function $\beta_*$'s are piecewise constant from the right there exists $\epsilon>0$ such that $\beta_{\gamma_1(\epsilon)}=\beta_{\gamma_2(\epsilon)}$. It is easily seen that $\gamma_1$ and $\gamma_2$ have the same pattern until $\epsilon$.
\end{proof}

\subsection{A tree-graded structure on the homogeneous real tree of valency $2^{\aleph_0}$}

Consider a homogeneous real tree $T$ of valency $2^{\aleph_0}$, and fix a base point $p\in T$. Let $\calW$ be the set of the equivalence classes of the P-geodesics with range $\{(T,p)\}$ and set $\alpha(w)=2^{\aleph_0}$ for each $w\in\calW$. A direct application of Theorem~\ref{existence:thm} shows that a universal tree-graded space $\F=\F_\alpha$ exists.

\begin{prop}
$\F$ is isometric to $T$.
\end{prop}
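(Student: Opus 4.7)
The plan is to show that $\F=\F_\alpha$ is a complete real tree in which every point has valency $2^{\aleph_0}$, and then to appeal to the uniqueness (up to isometry) of such a tree — the classification fact cited from~\cite{Si} and used in the proof of Proposition~\ref{transvmin:prop}.

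First, I would verify that $\F$ is a real tree. By Theorem~\ref{existence:thm}, $\F$ is complete, geodesic and tree-graded with each piece isometric to $T$, which is itself a real tree. Any simple geodesic triangle in $\F$ is contained in a single piece by axiom $(T_2)$, hence is a tripod since the containing piece is a real tree; the general (non-simple) case reduces to this. Thus $\F$ is a complete real tree.

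Second, I would compute the valency of $\F$ at an arbitrary point $f$. Since there is a unique index $i=0$ and $\alpha(w_0)=2^{\aleph_0}$, the pieces $P(f,0,\beta)$ for $\beta<2^{\aleph_0}$ all contain $f$ and pairwise meet only at $f$ by axiom $(T_1)$. Each such piece is isometric to the non-trivial tree $T$, so I can select a point $g_\beta\in P(f,0,\beta)\setminus\{f\}$; the initial segments of the geodesics $[f,g_\beta]$ lie in $2^{\aleph_0}$ pairwise distinct pieces, giving at least $2^{\aleph_0}$ directions at $f$. For the reverse inequality, inspection of the construction shows that each element of $\F$ is a quadruple $(\rho_f,\Gamma_f,\calI_f,\beta_f)$ in which $\calI_f$ is a countable almost filling and $\Gamma_f$, $\beta_f$ are functions on countable sets with values in $T\sqcup\alpha$; since $|T|,\alpha\leq 2^{\aleph_0}$, this forces $|\F|\leq 2^{\aleph_0}$, and hence the valency at every point is at most $2^{\aleph_0}$.

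Third, I would invoke the classification used in the proof of Proposition~\ref{transvmin:prop}: a complete real tree of constant valency $2^{\aleph_0}$ is, up to isometry, unique — homogeneity in this setting being a consequence of having constant valency (by a standard back-and-forth argument on a countable dense subset). Since $T$ is by definition such a tree, this yields $\F\cong T$.

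The main obstacle is Step 2, and specifically the cardinality bound $|\F|\leq 2^{\aleph_0}$: this requires checking carefully that each ingredient in the defining quadruple admits at most $2^{\aleph_0}$ choices. A secondary concern is whether the uniqueness cited from~\cite{Si} applies to $\F$, which is not a priori an asymptotic cone; but the classification statement really is an intrinsic property of complete real trees of a fixed valency, so the argument should go through unchanged.
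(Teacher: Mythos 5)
Your proof is correct and follows the same three-part skeleton as the paper: show $\F$ is a real tree, pin down the valency, invoke the uniqueness of the complete real tree with every point of valency $2^{\aleph_0}$. The genuine difference lies in the upper bound on the valency. The paper counts initial-pattern classes: it observes that $|\calW|=2^{\aleph_0}$ and each fiber $F^{-1}_{\F,q}(w)$ has cardinality $\alpha(w)=2^{\aleph_0}$, so $|\calY(\F,q)|=2^{\aleph_0}$; then it notes that two geodesics issuing from $q$ with $\gamma_1\cap\gamma_2=\{q\}$ either lie in distinct classes of $\calY(\F,q)$ or in a common piece, giving valency at most $2^{\aleph_0}\times 2^{\aleph_0}$. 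You instead bound $|\F|$ itself by $2^{\aleph_0}$ by inspecting the defining quadruples. That works here, but note it is slightly more fragile: you must observe that $\beta_f$, though formally a function on the interval $[0,\rho_f)$, is determined by countable data — the jump points of a right-piecewise-constant function are pairwise separated by the right-constancy intervals, hence countable — and you must use that the relevant $\alpha$ equals $2^{\aleph_0}$ exactly (for a larger $\alpha$ your cardinality bound on $\F$ would fail while the paper's combinatorial bound on $\calY(\F,q)$ would adapt). Your lower-bound argument via the pieces $P(f,0,\beta)$ is valid (and slightly more than is needed: a single piece, being isometric to $T$, already forces valency $\geq 2^{\aleph_0}$). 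Your Step 3 appeal to uniqueness matches the paper's; for a precise reference the universality result of Dyubina--Polterovich~\cite{DP} (also cited in the paper) is the cleanest source.
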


\begin{proof}
Being tree-graded with respect to real trees, $\F$ is a real tree. We only need to determine the valency at each $q\in\F$.
\par
It is not difficult to show that $|\calW|=2^{\aleph_0}$ (this is also the cardinality of the set of the P-geodesics with range $\{(T,p)\}$).
So, considering the partition of $\calY(\F,q)$ given by $\{F^{-1}_{\F,q}(w)\}_{w\in\calW}$, we get $|\calY(\F,q)|=2^{\aleph_0}\times 2^{\aleph_0}$.
Consider two geodesics $\gamma_1, \gamma_2$ starting at $q$, and suppose that $\gamma_1\cap\gamma_2=\{q\}$. It is clear that either $\gamma_1$ and $\gamma_2$ represent different elements of $\calY(\F,q)$ or they belong to the same piece. In particular we have that the valency of $\F$ at $q$ is at most $2^{\aleph_0}\times 2^{\aleph_0}=2^{\aleph_0}$, and so that it is exactly $2^{\aleph_0}$.
\par
Therefore the valency of $\F$ at each $q\in\F$ is $2^{\aleph_0}$. In particular, $\F$ is isometric to $T$.
\end{proof}

We are finally ready to prove Theorem~\ref{comparable:thm} and Theorem~\ref{cutpinthecone:thm}. The proof is the same for both results.

\emph{Proof of Theorem~\ref{comparable:thm} and Theorem~\ref{cutpinthecone:thm}.}
The proposition above, together with Proposition~\ref{F-1relhyp:prop} implies that $C(G_0,\nu_0)$ and $C(G_1,\nu_1)$ can be given a tree-graded structure such that for each $p\in C(G_0,\nu_0), q\in C(G_1,\nu_1)$ (there exists a choice of charts such that) $H_{C(G_0,\nu_0),p}$ and $H_{C(G_1,\nu_1),q}$ are constant functions with value $2^{\aleph_0}$.
\par
Consider now a family of homogeneous geodesic representatives $\{P_i\}$ for the classes of bilipschitz equivalence of the pieces of $C(G_0,\nu_0)$ and $C(G_1,\nu_1)$. Let $\calX$ be the set of the equivalence classes of the P-geodesics with range $\{(P_i,r_i)\}$, for some choice of $r_i\in P_i$, and set $\beta(x)=2^{\aleph_0}$ for each $x\in\calX$. Applying Theorem~\ref{existence:thm} we have that a universal tree-graded space $\G=\F_\beta$ exists.
\par
An easy application of Theorem~\ref{univmakesense:thm} gives that both $C(G_0,\nu_0)$ and $C(G_1,\nu_1)$ are bilipschitz equivalent to $\G$, and therefore they are bilipschitz equivalent.
\hspace*{\fill}$\Box$

\begin{rem}\label{ultrafilter:rem}
Notice that in the proof above we only used two facts about $C(G_0,\nu_0)$ and $C(G_1,\nu_1)$, that is that we can apply Proposition~\ref{F-1relhyp:prop} to them and that the sets of the bilipschitz equivalence classes of their pieces coincide. In particular, the proof works also if, in the definition of being comparable, we allow the asymptotic cones of $G_0$ and those of $G_1$ to be constructed using different ultrafilters.
\end{rem}

\begin{rem}\label{expldescr:rem}
In view of the tree-graded structure constructed in the previous proof, an asymptotic cone of a relatively hyperbolic group is (isometric to a) universal tree-graded space. In particular, the proof of Theorem~\ref{existence:thm} provides an ``explicit'' description of such asymptotic cones when the pieces are known.

\end{rem}

\end{document}